\newtheorem{theorem}{Theorem}
\theoremstyle{definition}
\newtheorem{definition}{Definition}
\newtheorem{proposition}{Proposition}
\newtheorem{corollary}{Corollary}
\newtheorem{assumption}{Assumption}
\newtheorem{lemma}{Lemma}
\DeclareMathOperator{\sign}{sign}
\newcommand{\rPose}{g}
\newcommand{\pVar}{q}
\newcommand{\applyG}[2]{#1(#2)}
\newcommand{\rCoordX}{x}
\newcommand{\rCoordY}{y}
\newcommand{\rCoordZ}{z}
\newcommand{\rCoordR}{R}
\newcommand{\rP}{p_r}
\newcommand{\rCoordsSEThreeAt}[1]{(\rCoordX(#1),\rCoordY(#1),\rCoordZ(#1),\rCoordR(#1))}
\newcommand{\rCoordsSEThreeSub}[1]{(x_{#1}, y_{#1}, z_{#1}, R_{#1})}
\newcommand{\xCoordsAtSEThree}[1]{(\rCoordX(#1),\rCoordY(#1), \rCoordZ(#1))}
\newcommand{\rCoordRAt}[1]{R(#1)}
\newcommand{\vPose}{v}
\newcommand{\vCoordX}{v_x}
\newcommand{\vCoordY}{v_y}
\newcommand{\vCoordZ}{v_z}
\newcommand{\vCoordsSEThree}{(\vCoordX, \vCoordY, \vCoordZ)}
\newcommand{\oP}{p_o}
\newcommand{\sPose}{g_r}
\newcommand{\sCoordsSupSEThree}[1]{(x^{#1}_r, y^{#1}_r, z^{#1}_r, R^{#1}_r)}
\newcommand{\sCoordsSupSub}[2]%
  {(x^{#1}_{r,#2}, y^{#1}_{r,#2}, \theta^{#1}_{r,#2})}
\newcommand{\sCoordsSupSubSEThree}[2]%
  {(x^{#1}_{r,#2}, y^{#1}_{r,#2}, z^{#1}_{r,#2}, R_{r,#2})}
\newcommand{\halfL}{\sigma}
\newcommand{\Nrect}{N_r}
\newcommand{\halfLSq}{\sigma}
\newcommand{\roboF}{\mathcal{R}}
\newcommand{\roboPos}[1]{p\left(#1\right)}
\newcommand{\roboPosDer}[1]{\dot{p}\left(#1\right)}
\newcommand{\roboVel}{e_0}
\newcommand{\roboControlLVel}{u}
\newcommand{\roboControlAVel}{\omega}
\newcommand{\roboControlAVelAt}[1]{(\omega_1(#1),\omega_2(#1), \omega_3(#1))}
\newcommand{\roboCurvature}{\kappa}
\newcommand{\roboRadiusCurvature}{\mathcal{R}_\kappa}
\begin{document}
%
% paper title
% Titles are generally capitalized except for words such as a, an, and, as,
% at, but, by, for, in, nor, of, on, or, the, to and up, which are usually
% not capitalized unless they are the first or last word of the title.
% Linebreaks \\ can be used within to get better formatting as desired.
% Do not put math or special symbols in the title.
%\title{Optimal Motion Planning Using A Weighted $L_p$ Norm in Planar and $3D$ Space}
\title{Bendable Cuboid Robot Path Planning with Collision Avoidance using Generalized $L_p$ Norms}

%
%
% author names and IEEE memberships
% note positions of commas and nonbreaking spaces ( ~ ) LaTeX will not break
% a structure at a ~ so this keeps an author's name from being broken across
% two lines.
% use \thanks{} to gain access to the first footnote area
% a separate \thanks must be used for each paragraph as LaTeX2e's \thanks
% was not built to handle multiple paragraphs
%

\author{Nak-seung P.~Hyun$^{1}$,~\IEEEmembership{Student Member,~IEEE,}
        Patricio A.~Vela$^{2}$,~\IEEEmembership{Member,~IEEE,}
        and~Erik I.~Verriest$^{3}$,~\IEEEmembership{Fellow,~IEEE}% <-this % stops a space
\thanks{The authors are with the Department
of Electrical and Computer Engineering, Georgia Institute of Technology, Atlanta,
GA, 30332 USA e-mail: (1. nhyun3@gatech.edu, 2. pvela@gatech.edu, 3. erik.verriest@ece.gatech.edu).}% <-this % stops a space
\thanks{1. N.P. Hyun is the corresponding author}
}
\maketitle

% As a general rule, do not put math, special symbols or citations
% in the abstract or keywords.
\begin{abstract}
Optimal path planning problems for rigid and deformable (bendable) cuboid
robots are considered by providing an analytic safety constraint using
generalized $L_p$ norms.  For regular cuboid robots, level sets of weighted
$L_p$ norms generate implicit approximations of their surfaces.  For
bendable cuboid robots a weighted $L_p$ norm in polar coordinates implicitly
approximates the surface boundary through a specified level set.  Obstacle
volumes, in the environment to navigate within, are presumed to be
approximately described as sub-level sets of weighted $L_p$ norms.
%Collision-free planning requires identifying the nearest points between the
%robot and the world obstacles.  
Using these approximate surface models, the optimal safe path planning
problem is reformulated as a two stage optimization problem, where the
safety constraint depends on a point on the robot which is closest to the
obstacle in the obstacle's distance metric. A set of equality and inequality
constraints are derived to replace the closest point problem, which is then
defines additional analytic constraints on the original path planning problem. 
Combining all the analytic constraints with logical AND operations leads to a
general optimal safe path planning problem.  Numerically solving the problem
involve conversion to a nonlinear programing problem.  Simulations for rigid
and bendable cuboid robot verify the proposed method.
%Simulation results for rigid and bendable cuboid robots shows the effectiveness of the proposed method. 

\end{abstract}

% Note that keywords are not normally used for peerreview papers.
\begin{IEEEkeywords}
Optimization and optimal control, collision avoidance, motion and path planning, soft robotics
\end{IEEEkeywords}

% For peer review papers, you can put extra information on the cover
% page as needed:
% \ifCLASSOPTIONpeerreview
% \begin{center} \bfseries EDICS Category: 3-BBND \end{center}
% \fi
%
% For peerreview papers, this IEEEtran command inserts a page break and
% creates the second title. It will be ignored for other modes.
\IEEEpeerreviewmaketitle

%%%%%%%%%%%%%%%%%%%%%%%%%%%%%%%%%%%%%%%%%%%%%%%%%%%%%%%%%%%%%%%%%%%%%%%%%%%%%%%%
%
% Sections
%
%%%%%%%%%%%%%%%%%%%%%%%%%%%%%%%%%%%%%%%%%%%%%%%%%%%%%%%%%%%%%%%%%%%%%%%%%%%%%%%%

%%%%%%%%%%%%%%%%%%%%%%%%%%%%%%%%%%%%%%%%%%%%%%%%%%%%%%%%%%%%%%%%%%%%%%%%%%%%%%%%
%
% Introduction
%
%%%%%%%%%%%%%%%%%%%%%%%%%%%%%%%%%%%%%%%%%%%%%%%%%%%%%%%%%%%%%%%%%%%%%%%%%%%%%%%%

\section{Introduction}
\label{sec:intro}

%\IEEEPARstart{P}{lanning} a safe path for a robot which avoids collisions with obstacles in three dimensional space is a challenging task since the planner needs to consider possible collisions for various geometry of the robots depending on its orientation and flexibility of the shape itself. A theory of rigid body motion in $\mathbb{R}^3$ is well developed using the group operations in special Eucledian group, $SE(3)$, (See \cite{murray1994mathematical}), and is used in many applications such as controlling the quadrocopter \cite{hehn2011quadrocopter}. 

\IEEEPARstart{P}{laning} an optimal safe path for a robot which minimizes control cost (e.g, path length) while avoiding collisions with obstacles in three dimensional space requires compatible kinematic and geometric models for tractably describing the collision constraints.
%as the robots are more capable of using various sensors to enhance the perception of the environment. 
The kinematics or dynamics of the 3D rigid robot are analytically formulated with the special Eucledian group, $SE(3)$ \cite{murray1994mathematical}. For example, the rigid body group model applies to quadrocopter \cite{hehn2011quadrocopter} or bipedal robot control \cite{grizzle20103d}. 
%which is used in many applications such as controlling the quadrocopter \cite{hehn2011quadrocopter}. 
The difficulty in finding an optimal path is due to the formulation of
the anti-collision configurations in $SE(3)$. A classical way to find
safe configurations is through the \emph{configuration space approach},
which transfers the rigid body path planning problem to a point mass
problem by enlarging the obstacle with the robot in $\mathbb{R}^3$ for
every orientation. The enlarged obstacles are denoted as C-obstacles,
and the set of collision-free points is denoted as C-free space, which
is the complement of the C-obstacle space in $SE(3)$. Optimal path
planning establishes trajectory feasibility by checking if the robot's
configuration lies in the C-free space for all time. 

The analytical expression of C-free space for polygon shaped obstacles contains a set of inequalities combined with logical OR operations \cite{lavalle2006planning}, which is analytically intractable since general optimization algorithms are based on logical AND relations \cite{Taha2006}. Therefore, most of the optimal safe path planning research includes either sampling the configuration space and discretely searching the feasible path in the C-free space (e.g., sampling based planning \cite{lavalle2006planning}) or converting the OR operations to AND operation by introducing integer values for the collision constraints and using mixed integer programing (MIP) algorithms \cite{schouwenaars2001mixed}. However, integer programming is known to be an NP-hard problem \cite{Karp2010}. Recently, a method to reduce the number of integers for cluttered obstacles has been introduced in \cite{deits2015efficient}, at the expense of reduced probability of finding a feasible path.  On the sampling side, \cite{Yan2015} derived a closed form Minkowski sum of two ellipsoids, which was later demonstrated to reduce the computational load of sampling based planning in \cite{Yan2016}.  Only ellipsoidal shapes were considered.

Inspired by the geometry of level sets of $L_p$ norms, our prior work \cite{Hyun2017Lp} proposed a new framework based on analytic inequalities combined with AND operations that represent the C-free space of a rectangular robot with rectangular or ellipsoidal obstacles in the case of planar $SE(2)$ models.  This new framework does not requires integer variables but is restricted to the planar model since \emph{edge to edge} collision in $\mathbb{R}^3$ cannot be captured.  This paper further generalizes the framework of weighted $L_p$ norms by analytically approximating the $SE(3)$ C-free space for regular rigid cuboid robots (which can be relaxed to ellipsoid or circular robots) with cuboid obstacles (which can also be relaxed to ellipsoid or spherical shapes). 

Continuing, there are many mechanical robots with non-rigid (deformable
or soft) bodies. These robots can change their body shape during
navigation or while executing a movement. For example, robotic fish bend
their bodies to maneuver under water
\cite{marchese2014autonomous,curet2011mechanical},
and a finger shaped manipulator using fiber-reinforced soft actuators in
\cite{Connolly03012017} bends, twists and enlarges its body. Bending
robots figure extensively as models for continuum manipulators
\cite{Li2014Continuum, Godage2011Continuum}, where a multi-segment
cylinder-shaped robot is controlled by a constant curvature bend in each
segment \cite{Kahrs2015ContinuumSurvey}.

However, the difficulty of applying rigid motion theory to the non-rigid motion directly is the lack of a meaningful joint space to describe the deformable motion \cite{Schultz2016SoftPath}. Therefore, most research in controlling deformable robots first approximates the deformable surfaces. Deformation of solid geometric objects has been studied extensively in computer graphics \cite{Moore2006survey}.  One popular non-physical model uses the free form deformation technique, where the surface deformations use a trivariate tensor product Berstein polynomial with a discrete set of control points in $\mathbb{R}^3$ describing the deformations. This method has been extensively used in computer graphics but the control points lack physical meaning for robotics applications. As a result, much path planning research on deformable robots uses a physically grounded deformation model acting on sampled points of the surface \cite{nealen2006physically}. The most accurate approximation of the surface, as far as we know, is by solving the partial differential equation (PDE) governing dynamic elastic equation using finite element methods (FEM) \cite{Rodriguez2006Deformable,gayle2005constraint}.  The surface is discretized by the meshes (elements), and the PDE is converted to the algebraic equations of the node points. The original continuous shape function of meshes are then approximated by basis functions using the node values. Despite the accuracy of the model, the computation load for solving FEM is very high. Alternatively, a mass-spring system is widely used to represent the surface by discretizing the surface with a finite set of point masses which are connected with spring \cite{anshelevich2000deformable, gayle2005path}. Benefits of having this model is that the governing dynamics equation become ordinary differential equations, in contrast to FEM where the planner needs to solve PDE. The challenge in path planning using the sample based deformable surface lies in the collision checking part since the planner needs to consider every possible collision on the discrete surfaces.  For deformable robots, since safety constraint cannot be expressed analytically, the planning research is mostly limited to sample based planning methods such as probabilistic roadmap (PRM) \cite{anshelevich2000deformable,gayle2005path} or rapidly-exploring random tree (RRT) \cite{Rodriguez2006Deformable} planners.

This paper includes an analytic approximation for the deformable surface of a bendable cuboid robot. It lies in the middle ground between physical and non-physical approximations since the control parameter of the deformable shape is the curvature of bending (which can be controlled in many applications, \cite{Li2014Continuum, Polygerinos2015Bending, Chang2017Snake}) but is not influenced by external forces. This analytic model allows the construction of a set of equality and inequality constraints combined with logical AND operations as a function of the configuration of the robot in $SE(3)$ and the curvature control parameter, which altogether represent the safety constraints for the bendable cuboid robot. The proposed bendable cuboid model can be used for the deformable safety guard of a continuum manipulator \cite{Li2014Continuum}, a soft fiber-reinforced bending actuator \cite{Polygerinos2015Bending}, and an average snake body frame \cite{Chang2017Snake}. 

\subsection{Contributions}
\subsubsection{Analytic surface model for bendable robot} A family of positive definite functions in $\mathbb{R}^2$ and $\mathbb{R}^3$ parameterized by some even number $p$ and the shape parameters of robot (half-lengths of cuboid and the curvature $\roboCurvature$) is introduced, whose $|\roboCurvature|$-level set represents the boundary of bent rectangular robot or bent cuboid robot, respectively. The formulation relies on a weighted $L_p$ norm in polar coordinates, which represents the exact boundary of bent cuboid robots as $p$ goes to infinity.  In addition, the length of the center line (to be precisely defined in \S\ref{sec:Model}) and the volume are preserved regardless of the bending. For the case of approximating a regular cuboid (no bending), the original weighted $L_p$ norm models the surface, thereby extending work of \cite{Hyun2017Lp} where the regular rectangular shapes (planar model) had been similarly approximated. Therefore, the surface model for both regular and bent cuboids are implicitly represented as a function of the configuration of the robot and its shape parameters.  

This weighted $L_p$ approach exactly models ellipsoids or circles, as the underlying mathematics are the same. Choosing the proper $p$ and half-lengths of the robot depends on the problem. Here, high $p$ values are chosen to represent cuboid robots and cuboid obstacles. 
 
\subsubsection{Generalized safety constraint of a cuboid in $SE(3)$}
The second contribution of this paper is on the reformulation of the safety constraint by introducing an inner optimization to find the closest point on the cuboid robot to the obstacle in the obstacle's distance metric. The challenge of extending the method in \cite{Hyun2017Lp} to $SE(3)$ is that \emph{edge to edge} collisions in three dimensional space are difficult to model. 
%so they were not considered.
%collision types were predefined for all corner points %of the robot and the obstacle, which would not capture fully the
Instead of fixing the candidate points of collision as per \cite{Hyun2017Lp}, we introduce an auxiliary optimization problem which finds the point on the robot with a minimum distance to the obstacle, then checks whether the point belongs to the obstacle or not. Since the surface of the obstacle is also modeled by the $1$-level set of weighted $L_p$ norm, a  minimum distance greater than $1$ implies that the shape and configuration of the robot is safe.  

The challenge is that, now the constraints of the original path planning problem require the solution of another optimization problem. This, so called \emph{two staged optimization}, is avoided by analytically deriving the necessary and sufficient conditions for the closest point problem for both regular and bent cuboid robots. In the end, a total of four equality and two inequality constraints represent the safety constraint of a rigid cuboid robot, and four equality and ten inequality constraints for a bendable cuboid robot. 

\subsubsection{Optimal path planning for rigid and bendable cuboid robot}
Finally, the generalized Bolza type optimal path planning problem is formulated, containing only analytic constraints combined with logical AND operations. Numerical shortest path solutions for rigid regular cuboid robots and bendable cuboid robots are sought by transferring the infinite dimensional problem to a nonlinear programing problem (NLP).  The MATLAB based software OPTRAGEN \cite{Bhattacharya2006} performs the conversion, where the trajectories are defined as B-splines, and constraints are computed at given collocation points. The NLP is then solved with the interior point method using IPOPT 3.12.6 \cite{Wachter2006}.  Collision avoidance is numerically and graphically verified by simulated results. 

{\bf Contents.} The next section (\S \ref{sec:preliminary}) reviews the weighted $L_p$ norm, and the kinematics of rigid body frame in $SE(3)$. The cuboid robot model is introduced in \S\ref{sec:Model}, and the optimal path planning problem is defined in \S\ref{sec:prob}. The analytic approximation of the robot's surface is provided in \S\ref{sec:Surface_Model}, and the safety constraint for collision avoidance is derived in \S\ref{sec:col_avoid}. The shortest safe path is sought numerically in \S\ref{sec:path}.

\section{Preliminary}
\label{sec:preliminary}

\subsection{Weighted $L_p$ norm}
%%%%%%%%%%%%%%%%%%%%%%%%%%%%%%%%%%%%%%%%%%%%%%%%%%%%%%%%%%%%%%%%%%%%%%%%%%%%%%%%
%
% Weighted L_p
%
%%%%%%%%%%%%%%%%%%%%%%%%%%%%%%%%%%%%%%%%%%%%%%%%%%%%%%%%%%%%%%%%%%%%%%%%%%%%%%%%
\label{sec:weighted}

Denote a vector, $x=(x_1,\cdots,x_n)$ in $\mathbb{R}^n$ for $n\in\mathbb{N}$ be a positive vector if $x_i > 0$ for all $i$.

\begin{definition}[Weighted $L_p$ norm]
\label{def:weighted_Lp}
Let $\halfL \in \mathbb{R}^n$ be a positive vector, and $0 < p < \infty$, 
then $||\cdot||_{\halfL, p}:\mathbb{R}^n\to\mathbb{R}_{\geq 0}$ is the
$\halfL$-weighted $L_p$ norm such that
\begin{equation*}
  ||x||_{(\halfL,p)} := \left(\sum_{i=1}^n 
         ({|x_i|}/{\halfL_i})^p\right)^{\frac{1}{p}},
\end{equation*} 
for all $x\in\mathbb{R}^n$ where $\halfL=(\halfL_1,\cdots,\halfL_n)$.  The level set $\{x\in\mathbb{R}^n:||x||_{(\halfL,p)}=1\}$ is called the \emph{unit sphere} of the $L_{\halfL,p}$ norm. 
\end{definition}

As $p$ increases, the $L_{\halfL,p}$ norm unit sphere approaches a rectangular shape in $\mathbb{R}^2$ and a cuboid shape in $\mathbb{R}^3$. More details on level sets for weighted $L_p$ norm can be found in \cite{Hyun2017Lp}.%

\subsection{Kinematics in $SE(3)$}
\label{sec:kinematics}

This paper studies the kinematic model for cuboid robots in $SE(3):=\mathbb{R}^3\times SO(3)$ where $SO(3)$ is the special orthogonal group in $\mathbb{R}^{3\times 3}$ \cite{murray1994mathematical}. Let $\rPose(t)\in SE(3)$ be the robot frame at time $t$,  
\begin{equation} \label{eqn:robotframe}
  \rPose(t):=\rCoordsSEThreeAt{t}^T,
\end{equation} 
where $\roboPos{t}:=\xCoordsAtSEThree{t}$ is the center of mass (CoM) position of the robot in the world frame, and $\rCoordRAt{t}\in SO(3)$ is the rotation matrix describing the orientation of robot frame relative to the world frame. This kinematic model assumes that the robot controls the speed of the CoM, $\roboControlLVel(t)\in\mathbb{R}$, and the instantaneous angular velocity, $\roboControlAVel(t):=\roboControlAVelAt{t}\in\mathbb{R}^3$, in the body frame. These kinematics are 
\begin{eqnarray} 
  \label{eqn:kinematicsSE3}
  \roboPosDer{t}&=&\rCoordRAt{t}\roboVel\roboControlLVel(t) \\
  \label{eqn:kinematicsSE32}
  \dot{\rCoordR}(t)&=&\rCoordRAt{t}\hat{\roboControlAVel}(t)
\end{eqnarray}
where $\hat{\roboControlAVel}(t)$ maps $\roboControlAVel$ to a
$3\!\times\!3$ skew-symmetric matrix in $so(3)$, 
\begin{equation} \label{eqn:skewsymSE3}
  \hat{\roboControlAVel}(t):=\begin{pmatrix}
      0 & -\omega_3 & \omega_2 \\
      \omega_3 & 0 & -\omega_1 \\
       -\omega_2 & \omega_1 & 0
    \end{pmatrix},
\end{equation}
and $\roboVel\in\mathbb{R}^3$ is the linear velocity direction in the
body frame.
%The system in (\ref{eqn:kinematicsSE3}) is controlled with four controls, $\roboControlLVel$ and $\roboControlAVel$.

%%%%%%%%%%%%%%%%%%%%%%%%%%%%%%%%%%%%%%%%%%%%%%%%%%%%%%%%%%%%%%%%%%%%%%%%%%%%%%%%
%
% Model
%
%%%%%%%%%%%%%%%%%%%%%%%%%%%%%%%%%%%%%%%%%%%%%%%%%%%%%%%%%%%%%%%%%%%%%%%%%%%%%%%%

\section{Cuboid Robot Models}
\label{sec:Model}
This section explores the two types of robot geometries considered.  One is a rigid model where the geometry of the robot does not change, and the other is a bendable model where the robot is assumed to be bendable in one of the axis in the robot's frame.  The former is the \emph{regular cuboid} geometry where the rigid shape is fully determined by the half lengths, $\halfL:=(\halfL_1, \halfL_2, \halfL_3)$, on each axis in the robot frame.  Figure~\ref{fig:Lp_levelset_3d} depicts an example with $\halfL=(2,1,1)$.  The latter is the bent cuboid geometry where the bend is along the x-y plane with the constant curvature, $\roboCurvature\in\mathbb{R}$. The full geometry of the \emph{bent cuboid} requires the half lengths, $\halfL:=(\halfL_1, \halfL_2, \halfL_3)$, the curvature, $\roboCurvature$, and the bending angle $\theta_\roboCurvature$.  Figure~\ref{fig:Lp_Bent_levelset_3d_positive} depicts an example with $\halfL=(2,1,1)$, $\roboCurvature=0.3927$, and $\theta_\roboCurvature=\pi/2$.  For both geometries, there exists a curve passing through the robot which serves as an axis of symmetry for its surface.  
\begin{definition} \label{def:centerline}
The \emph{centerline} of the robot is a curve satisfying the following: 
1) An intersection between a normal plane of the curve with the robot at
any point on the curve creates a rectangle with half lengths
$(\halfL_2,\halfL_3)$; and
2) The curve passes through the center of all such rectangles.
\end{definition}

The depictions in Figure~\ref{fig:model} include the robot centerlines, drawn as solid red curves penetrating the robot. The dashed red line on the top surface visualizes the shape of the centerline inside the robot.  The length of the centerline inside the robot is $2\halfL_1$ for the regular cuboid, and $1/\roboCurvature\cdot\theta_\roboCurvature$ for the bent cuboid.  Inspired by the fact that the bent cuboid can be generated by bending the regular cuboid, the next invariant property for centerline is assumed.

\begin{assumption}[Invariant property] \label{asmp:invariant_center}
The length of the centerline (LoC) captured in the bent cuboid is equal to the LoC of the original regular cuboid.
\end{assumption}

The LoC of the examples in Figure~\ref{fig:model} are both $4$. The invariant property implies that the bending angle can be explicitly expressed by the curvature and the half-lengths as 
\begin{equation} \label{eqn:bending_angle}
  \theta_\roboCurvature= 2\halfL_1|\roboCurvature|,
\end{equation}
indicating that the half lengths and the curvature fully characterize
the geometry of the bending robot. Therefore, we denote the \emph{shape
parameters} for the regular cuboid by $\halfL$, and for the bent cuboid
by $(\halfL,\kappa)$.

\begin{figure}[t!]
  \captionsetup[subfigure]{}
  \centering
  \subfloat[Regular cuboid robot]
    {{\includegraphics[width=1.75in, clip=true,trim=0.15in 0in 0.5in 0.5in]{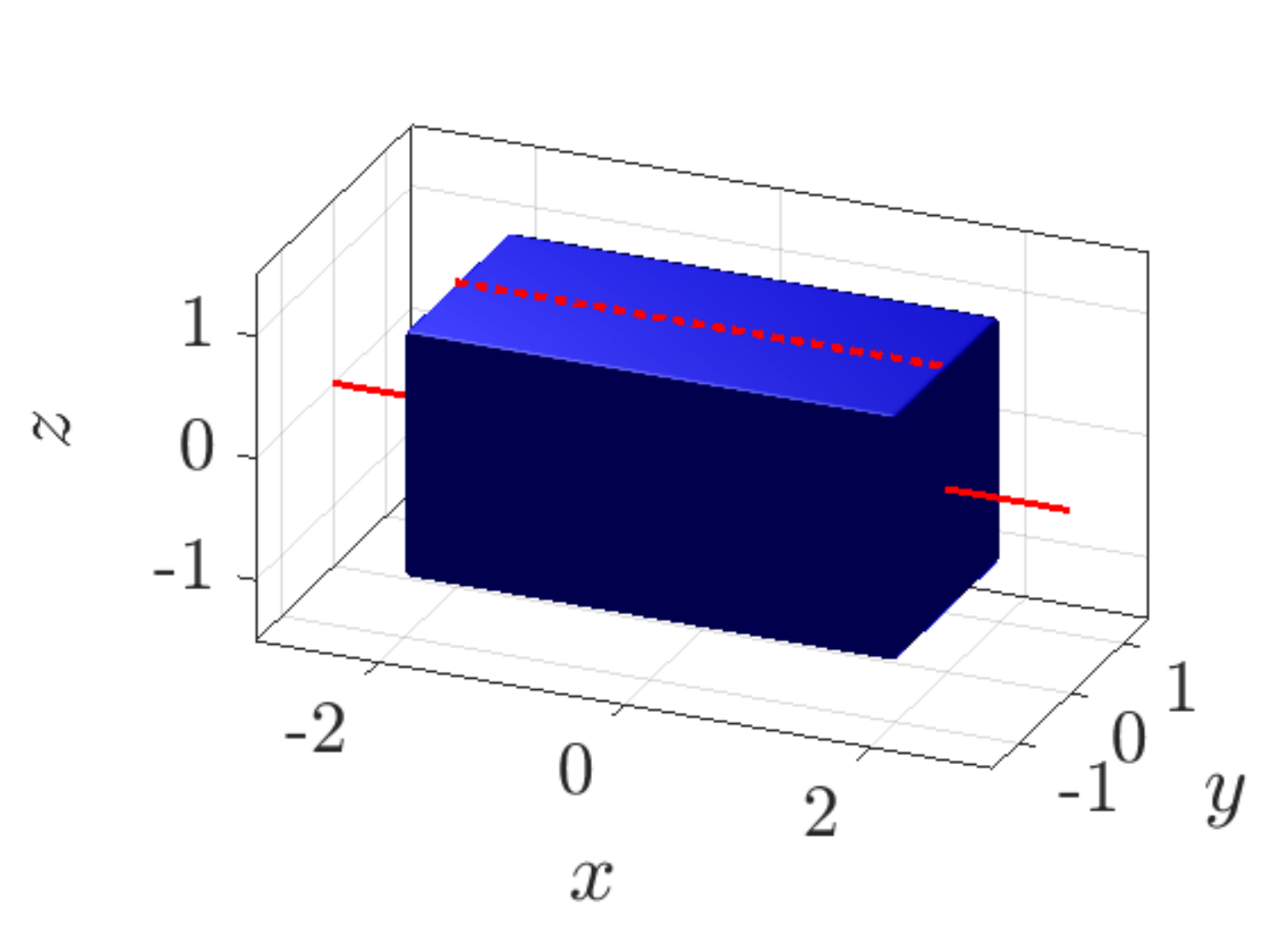}}\label{fig:Lp_levelset_3d}}
  %\hfill
  \subfloat[Bent cuboid robot]
    %{\includegraphics[width=2.25in]{Lp_Bent_levelset_3d_positive_2}\label{fig:Lp_Bent_levelset_3d_positive}}
		{{\includegraphics[width=1.65in, clip=true,trim=0.55in 2.7in 1.0in 2.7in]{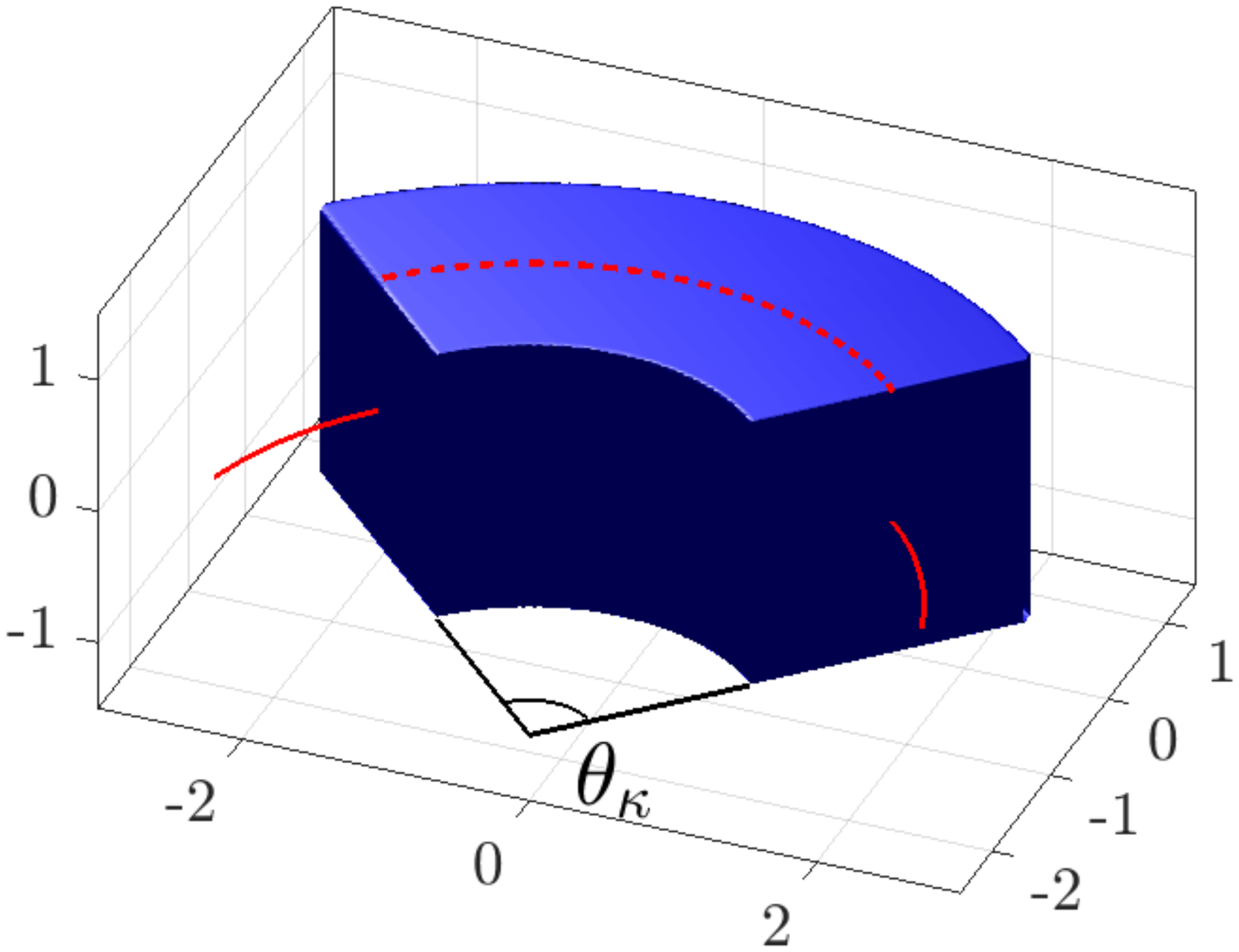}}\label{fig:Lp_Bent_levelset_3d_positive}}
	\caption{Two types of geometries for cuboid robots in $SE(3)$}
\label{fig:model}
\end{figure}

\subsection{Rigid model}
The rigid model shape is time invariant. Therefore, the shape parameters
$\halfL$ and $\roboCurvature$ are fixed, and the required controls for
rigid body kinematics are $\roboControlLVel$ and $\roboControlAVel$. The
regular cuboid model could represent a box shaped furniture in the 3D
space, and the bent cuboid model could be used as an surface model for a
boomerang. 

\subsection{Bendable model}
The bendable model admits time variable shape parameters. Therefore, the
shape parameters $\halfL(t)$ and $\roboCurvature(t)$ become additional
control parameters besides $\roboControlLVel$ and $\roboControlAVel$. As
we consider length preserving robots, the following assumptions holds:
\begin{assumption}[Constant length assumption] \label{asmp:const_length}
The half lengths of the robot $\halfL>0$ are fixed, and the robot can be
bent by controlling the curvature $\roboCurvature(t)$.
\end{assumption}

Though the robot may bend with different curvatures, the above assumption means that the LoC is constant and equal to $2\halfL_1$ for all time, due to (\ref{eqn:bending_angle}).

The bendable cuboid robot is directly related to the fundamental segment of finger motion using fiber-reinforced soft actuators in \cite{Connolly03012017}, and it is also related to the bending motion of the black knifefish in \cite{postlethwaite2009optimal}.   

\textbf{Remark 1}
The volume of the regular cuboid robot, $V_r>0$, with $\halfL$, is
\begin{equation} \label{eqn:constVolume}
  V_r=8\halfL_1\halfL_2\halfL_3.
\end{equation}
From (\ref{eqn:constVolume}), the volume of a bent cuboid with the same half-lengths and satisfying Assumption~\ref{asmp:const_length} is
\begin{eqnarray*}
  V_b(t) 
    &=& ((1/|\roboCurvature(t)|+\halfL_2)^2-(1/|\roboCurvature(t)|
                               -\halfL_2)^2)\theta_\roboCurvature(t)\halfL_3 \\
    &=& 4(1/|\roboCurvature(t)|)\halfL_2\theta_\roboCurvature(t)\halfL_3 \\
    &=& V_r,
\end{eqnarray*}
where the last equality holds by using (\ref{eqn:bending_angle}).  Therefore, the volume is invariant during bending, and the length preserving robot satisfying Assumption~\ref{asmp:const_length} also preserves the volume of the robot. This could be important for liquid filled robots. The proposed algorithm in this paper works equally will in the case of a time varying half-length (i.e., a volume changing robot) without modifying any key result in this paper. 

%These type of robot can elongate the length through the centerline, or shrink the length but increase the thickness of the body. These types of deformation has analogy to an extension or expansion motion generated by fiber-reinforced soft actuators in \cite{Connolly03012017}. 

%\begin{itemize}
%\item Same arclength of the center line of the robot which gives one parameter control (curvature) determine the shape of the bent cuboid
%\\
%\item Control parameters (speed and angular velocity and curvature)
%
%\end{itemize}

%%%%%%%%%%%%%%%%%%%%%%%%%%%%%%%%%%%%%%%%%%%%%%%%%%%%%%%%%%%%%%%%%%%%%%%%%%%%%%%%
%
% Problem statement
%
%%%%%%%%%%%%%%%%%%%%%%%%%%%%%%%%%%%%%%%%%%%%%%%%%%%%%%%%%%%%%%%%%%%%%%%%%%%%%%%%

\section{Problem Statement}
\label{sec:prob}

Consider the optimal path planning problem for cuboid robots navigating 
through a region containing regular cuboid obstacles.
The robot state includes its frame $\rPose(t)\in SE(3)$ and its shape 
parameters $(\halfL(t),\roboCurvature(t))$, which satisfy 
Assumption~\ref{asmp:invariant_center}.
There are $\Nrect$ regular cuboid obstacles each of whose frames and half-lengths are 
$\sPose^j := \sCoordsSupSEThree{j}\in SE(3)$ 
and 
$\halfLSq^j:=(\halfLSq_1^j,\halfLSq_2^j, \halfLSq_3^j)$,
respectively, for 
$j \in \{1, \cdots, \Nrect\}$.
The origin of each obstacle frame is at the center of cuboid. 
An initial $SE(3)$ configuration and a final $SE(3)$ configuration are
given as the path boundary constraints.  The arrival time, $T_f>0$, is
free.

Let $\applyG{\rPose}{\pVar} \in \mathbb{R}^3$ denote the transformation
of coordinates of $\pVar \in \mathbb{R}^3$ according to $\rPose \in
SE(3)$. The inverse $g^{-1}$ would perform the change of coordinates in
the inverse direction.  
Per \S\ref{sec:weighted}, the $\halfLSq^j$-weighted
$L_\infty$ norm represents the regular cuboid with half lengths,
$\halfLSq^j$, and the collision-free spaces of the regular cuboid obstacles
are given by the super-level sets 
\begin{equation*}
R_j  := \{ \pVar \in \mathbb{R}^3 : \left| \left| [\sPose^j]^{-1} (\pVar)
  \right| \right|_{\halfLSq^j,\infty} > 1 \},
\end{equation*}
in the world frame for all $i$ and $j$. 
As the robot moves, the collision constraints in the robot's body frame will change.  The collision-free space in robot body coordinates is
\begin{equation*}
  R^{\roboF}_j  := \{ \pVar \in \mathbb{R}^3 : \left| \left|
                     [\sPose^j]^{-1}\circ \applyG{\rPose}{\pVar}
                                   \right| \right|_{\halfLSq^j,\infty} > 1 \}.
\end{equation*}

Let $B(\halfL(t),\roboCurvature(t))\subset\mathbb{R}^3$ describe the robot's full body in the robot frame at time $t$. The optimal safe path planning problem formulation is
\begin{align}
  \label{eqn:problem_statement_cost}
  \arg \min_{\roboControlLVel, \roboControlAVel, \halfL(t),\roboCurvature(t)} & \int_0^{T_f} L(\rPose, \dot{\rPose}, u, \roboCurvature)dt \ \ \text{subject to} \\
  \label{eqn:problem_statement_equality}
  &\begin{cases}
    \dot{\rPose}            = f(\rPose, \roboControlLVel,\roboControlAVel),\\
    \rPose(0)               = \mathbf{\rCoordsSEThreeSub{i}},\\
    \rPose(T_f)   					= \mathbf{\rCoordsSEThreeSub{f}},\\
  \end{cases}\\
  \label{eqn:problem_statement_inequality}
  & B(\halfL(t),\roboCurvature(t)) \subset(\bigcap_{j=1}^{\Nrect}R_j^\roboF),
\end{align} 
where the vector field $f$ is given by the kinematic equations (\ref{eqn:kinematicsSE3}-\ref{eqn:kinematicsSE32}), $\roboControlLVel\in\mathbb{R}$ and $\roboControlAVel\in\mathbb{R}^3$ are the control inputs, and $L(\rPose, \dot{\rPose}, u)$ is some physically meaningful cost such as control energy or path length.  Depending on the flexibility of the robot, $B(\halfL(t),\roboCurvature(t))$ would be constant (rigid model) or be time varying (bendable model). 

The optimization problem will be solved for the two cases:
\begin{enumerate}
\item (Rigid robot) Find the kinematic controls for fixed shape parameters. 
\item (Bendable robot) Find the kinematic controls and the curvature control, $\roboCurvature(t)$, subject to Assumption~\ref{asmp:const_length}.
\end{enumerate}

%TODO: Need to figure out best placement for the following statement.
%TODO: As is, feels hidden such that message is lost and sections that
%TODO:  follow do not make sense.
Since the constraints in (\ref{eqn:problem_statement_inequality}) typically require OR operations using the hyperplanes of the faces, our goal is to first find a set of inequality and equality constraints with AND operations for the safety constraints.  
%\\
%{\bf Remark 1.} Once the safety constraint is reformulated with AND operations, 
%the problem can be generalized to any Bolza type path planning problems 
%\cite{bryson1975applied}, which its numerical solution no longer require explicit 
%construction of the C-obstacles nor the C-free space, and the problem. 
\\
{\bf Remark 2.} The obstacles are not restricted to cuboid shapes, rather any $1$-level set of different $\halfLSq^j$ and $p$ value serves to model an obstacle. For example, an ellipsoidal obstacle has $p=2$ and $\halfLSq^j= (a,b,c)$ for distinct $a,b,c>0$, while a spherical obstacle further constrains the $a,b,c$ to be equal. Since safety conditions are derived based on the abstract shape parameters, $\halfLSq^j$, and $p$, the results naturally applies to ellipsoidal and spherical obstacles.

%%%%%%%%%%%%%%%%%%%%%%%%%%%%%%%%%%%%%%%%%%%%%%%%%%%%%%%%%%%%%%%%%%%%%%%%%%%%%%%%
%
% Model
%
%%%%%%%%%%%%%%%%%%%%%%%%%%%%%%%%%%%%%%%%%%%%%%%%%%%%%%%%%%%%%%%%%%%%%%%%%%%%%%%%
\begin{figure*}[t!]
  \captionsetup[subfigure]{}
  \centering
  \hfill
  %PATRICK: PLEASE CREATE THE IDEAL CASE.
  \subfloat[$1$-levelset with $p=2$]
    {{\includegraphics[width=1.70in, clip=true,trim=0.15in 0in 0.55in 0.45in]{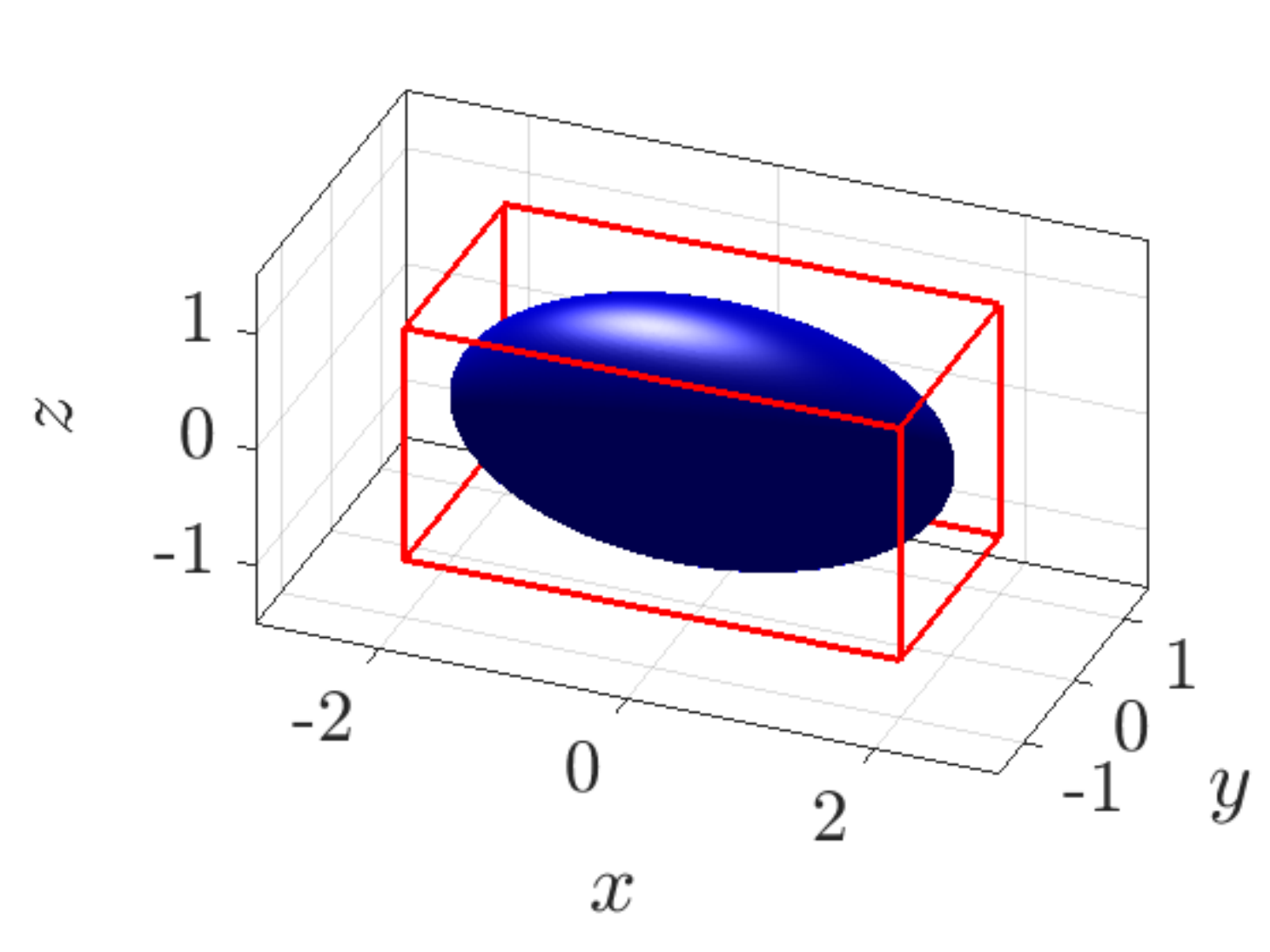}}\label{fig:Lp_levelset_3d_p2}}
  \subfloat[$1$-levelset with $p=10$]
    {{\includegraphics[width=1.7in, clip=true,trim=0.15in 0in 0.55in 0.45in]{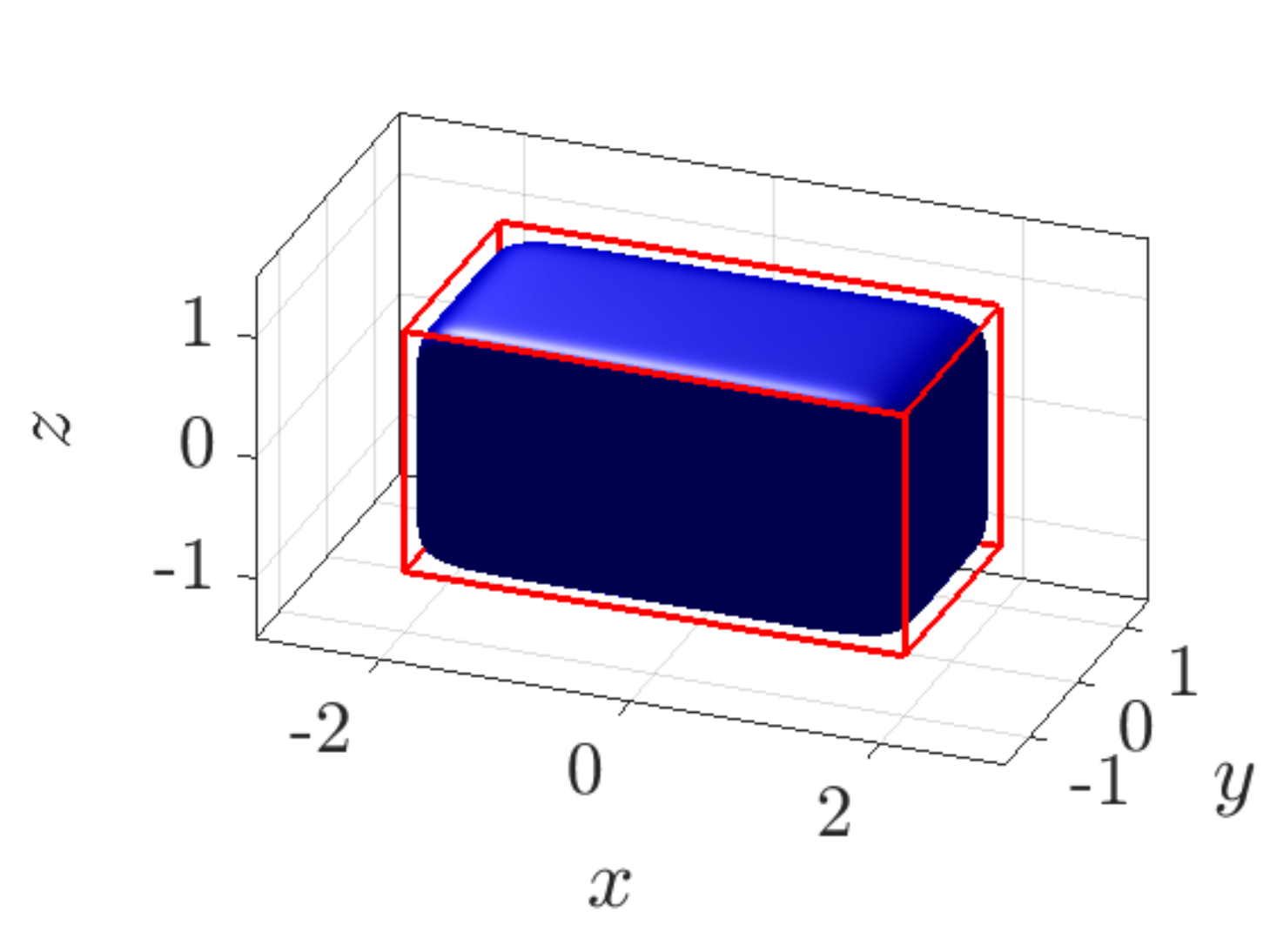}}\label{fig:Lp_levelset_3d_p10}}
		\subfloat[$|\roboCurvature|$-levelset with $p=4$]
    {\includegraphics[width=1.70in, clip=true,trim=0.15in 0in 0.55in 0.45in]{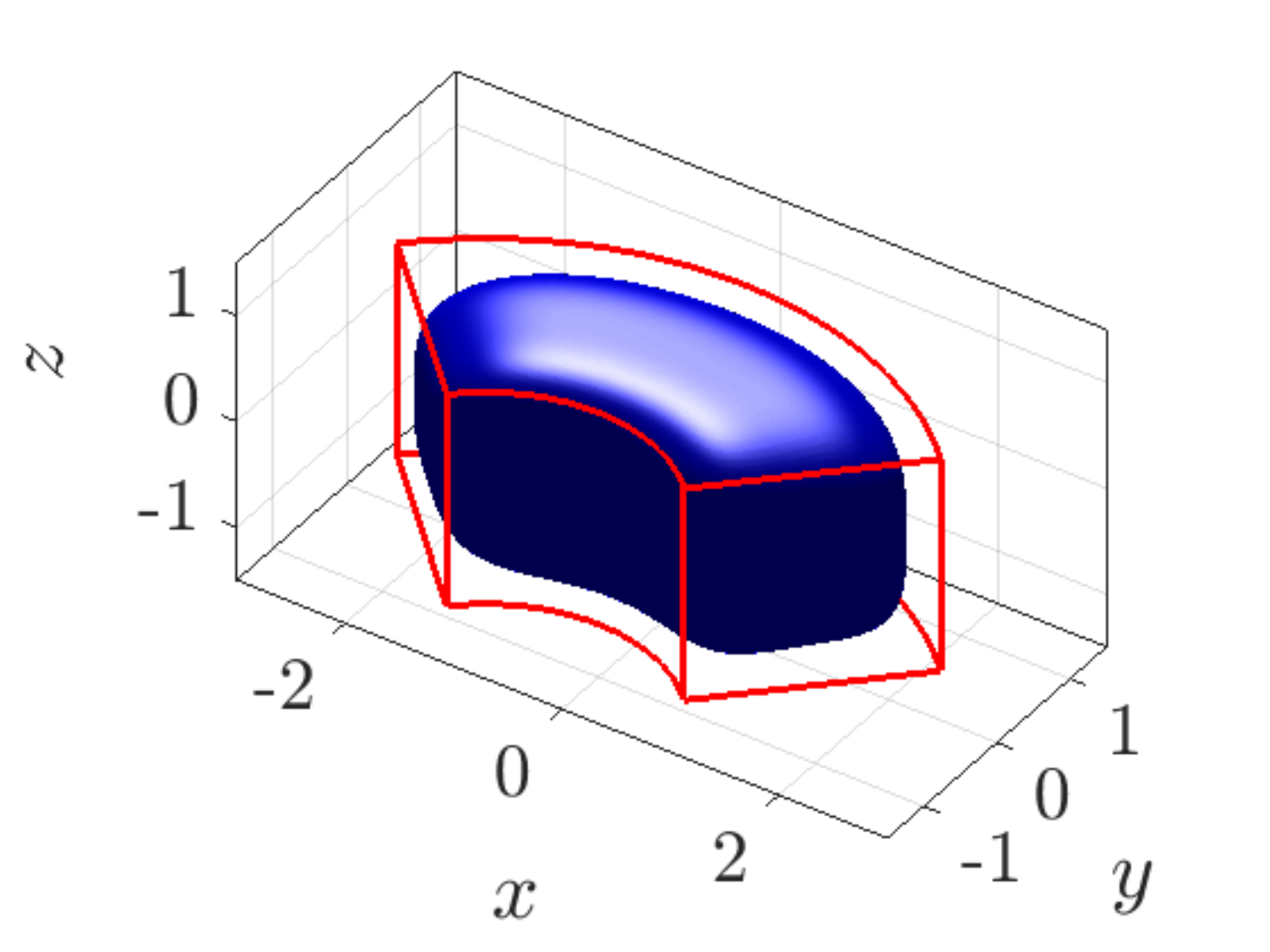}\label{fig:Lp_Bent_levelset_3d_positive_P4}}
  \subfloat[$|\roboCurvature|$-levelset with $p=10$]
    {\includegraphics[width=1.70in, clip=true,trim=0.15in 0in 0.55in 0.45in]{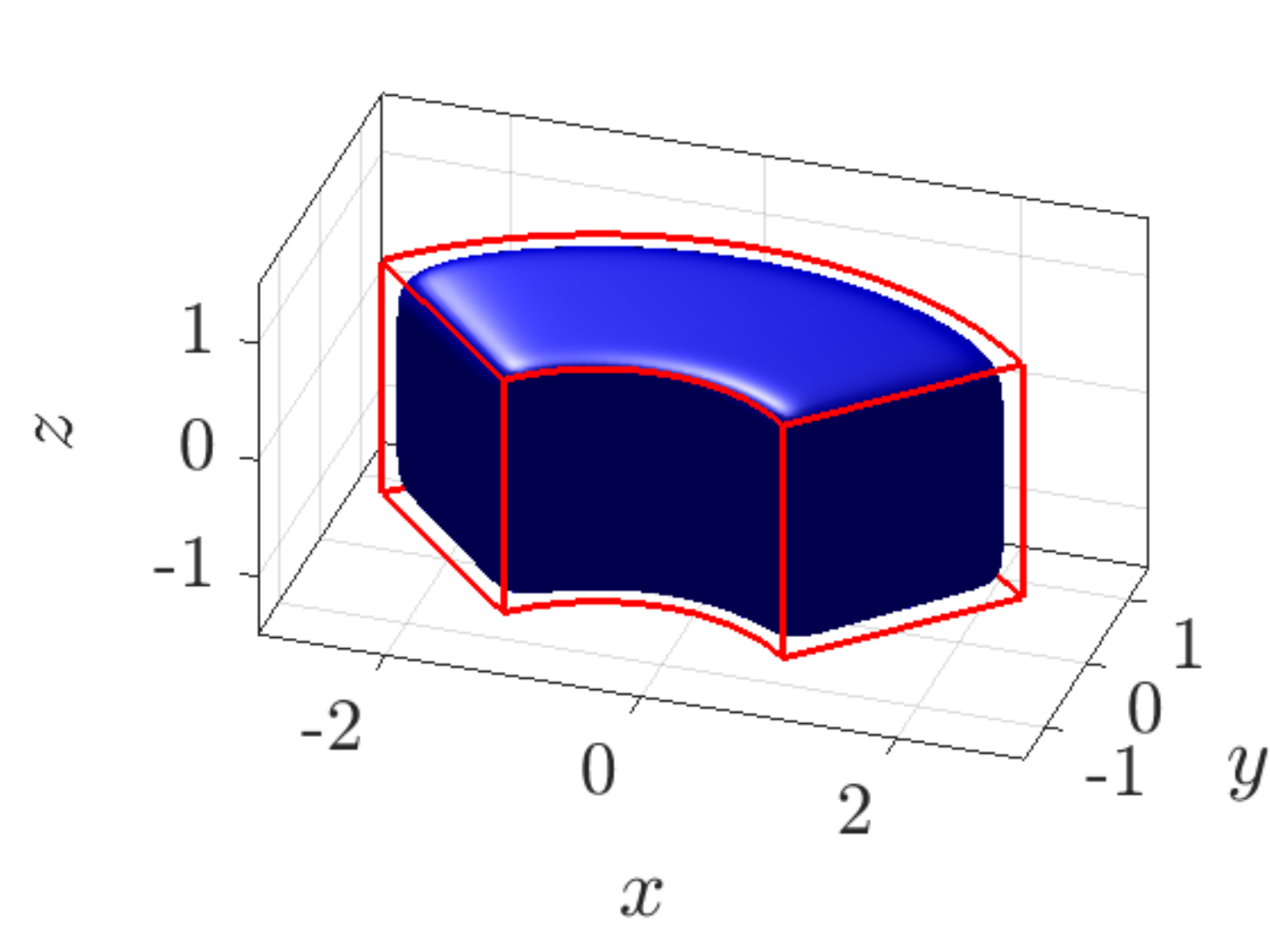}\label{fig:Lp_Bent_levelset_3d_positive_P10}}
  %\vfill
  %\subfloat[$1$-levelset with $p=20$]
    %{\fbox{\includegraphics[width=1.7in, clip=true,trim=0.15in 0in 0.55in 0.45in]{Lp_levelset_3d_p20}} \label{fig:Lp_levelset_3d_p20}}
	%\caption{Geometrical interpretation of weighted $L_p$ norm in 3D}
\caption{Approximated surface for regular and bent cuboid}
\label{fig:LpLevelset3D}
\end{figure*}

\section{Approximation of Cuboid Robots}
\label{sec:Surface_Model}

In this section, the regular cuboid is approximated by the weighted $L_p$ norm in Cartesian coordinate, and a new positive definite function is proposed, whose $|\roboCurvature|$-level set approximates the bent cuboid. 
%The approximation of the bent cuboid is inspired by the translated weighted $L_p$ function in polar coordinate.

\subsection{Regular cuboid approximation}

The rectangular robot surface in $\mathbb{R}^2$ approximated
by the $1$-level set of a weighted $L_p$ norm in \cite{Hyun2017Lp}, 
generalizes to $\mathbb{R}^3$.
%The general definition of the weighted $L_p$ norm is introduced in the
%\S\ref{sec:weighted}. The 
%the approximated body of the rectangular robot in 
%as shown in
%\cite{Hyun2017Lp}. 
%In Figure~\ref{fig:weighted_Lp}, it is confirmed that the $1$-level set of weighted $L_p$ norm represents the approximated body of the rectangular robot in $\mathbb{R}^2$. 
%It can be generalized to the space in $3D$ by 
Choosing the positive half lengths $\halfL:=(\halfL_1, \halfL_2, \halfL_3)$,
the cuboid surface description is
\begin{equation} \label{eqn:regular_cuboid}
  B_{(\halfL,p)}:=\{\vPose\in\mathbb{R}^3| \text{ } ||\vPose||_{(\halfL,p)}=1\}.
\end{equation}
Equation (\ref{eqn:regular_cuboid}), evaluated in the robot frame, will play a role in formalizing the safety constraint (\ref{eqn:problem_statement_inequality}) using inequality constraints.
Figure~\ref{fig:LpLevelset3D} consists of several examples of cuboid approximations using the weighted $L_p$ approximation with different $p$ values. 
The half lengths are chosen as $\halfL=(2,1,1)$, with Figure~\ref{fig:Lp_levelset_3d_p2} visualizing an ellipsoid for the choice $p=2$, and Figure~\ref{fig:Lp_levelset_3d_p10} an approximate cuboid for the choice $p=10$.
%and different $p$ values are used in the example. 
%An ellipsoid shape is created for $p=2$ 
%and the example of higher $p$ values, $p=10$, are shown in. 
The surface model approaches that of a regular cuboid (the red boundaries in
Figure~\ref{fig:LpLevelset3D}) as the value of $p$ increases.  The example
of Figure~\ref{fig:Lp_levelset_3d} uses $p=200$.

%
%Intuitively inferred by the definition of weighted $L_p$
%\begin{itemize}
%\item Figures of 3D $L_p$ boxes
%\end{itemize}

\subsection{Bent cuboid approximation}
\label{sec:Surface:Bend_cuboid}
%\begin{figure}[t!]
  %\captionsetup[subfigure]{}
  %\centering
  %\hfill
  %%PATRICK: PLEASE CREATE THE IDEAL CASE.
  %\subfloat[$\roboCurvature>0$]
    %{\fbox{\includegraphics[width=1.75in, clip=true,trim=0.1in 0.3in 0.9in 1.0in]{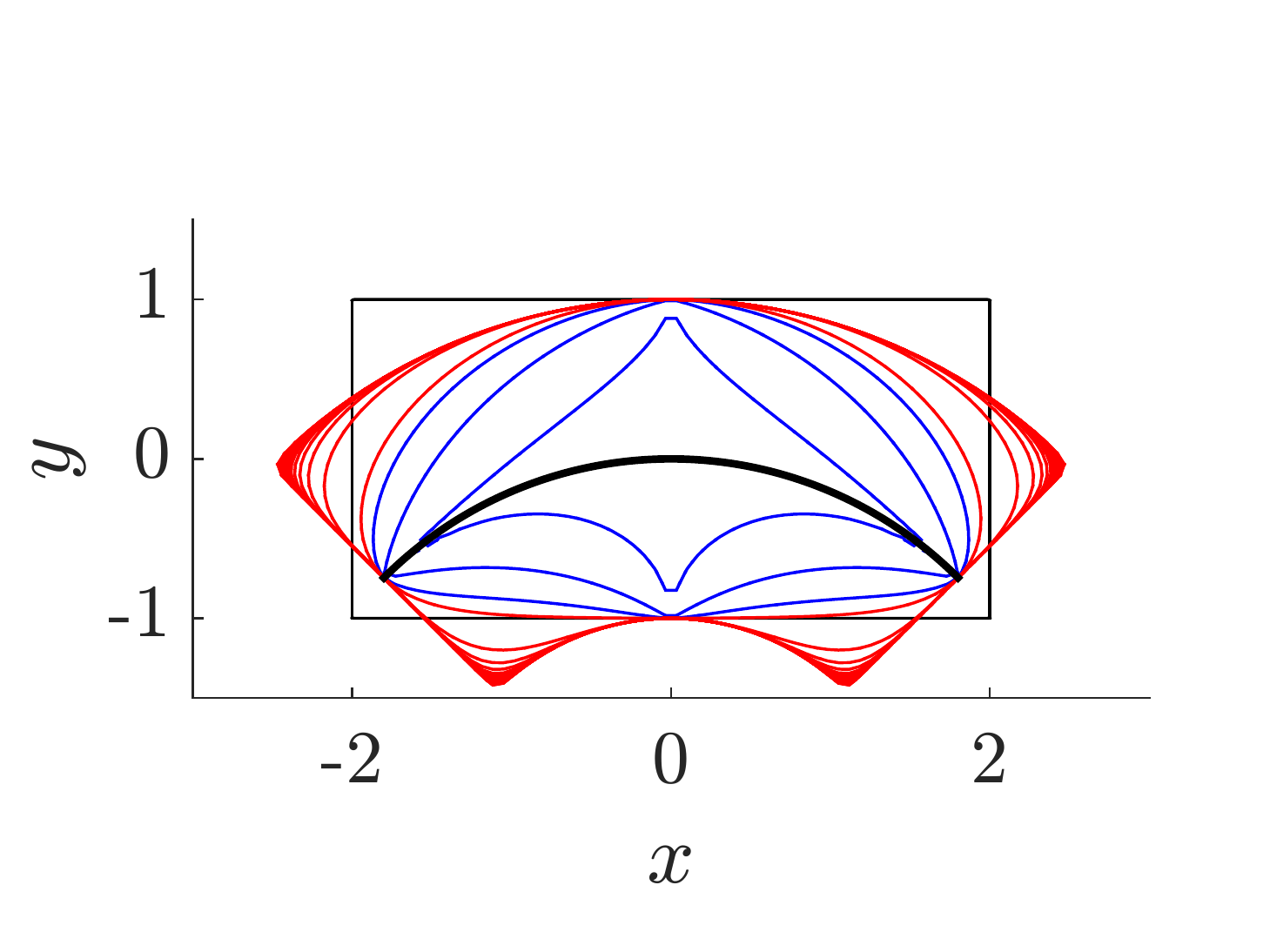}}\label{fig:Lp_Bent_levelset_positive}}
  %%\hfill
  %\subfloat[$\roboCurvature<0$]
    %{\fbox{\includegraphics[width=1.75in, clip=true,trim=0.1in 0.3in 0.9in 1.0in]{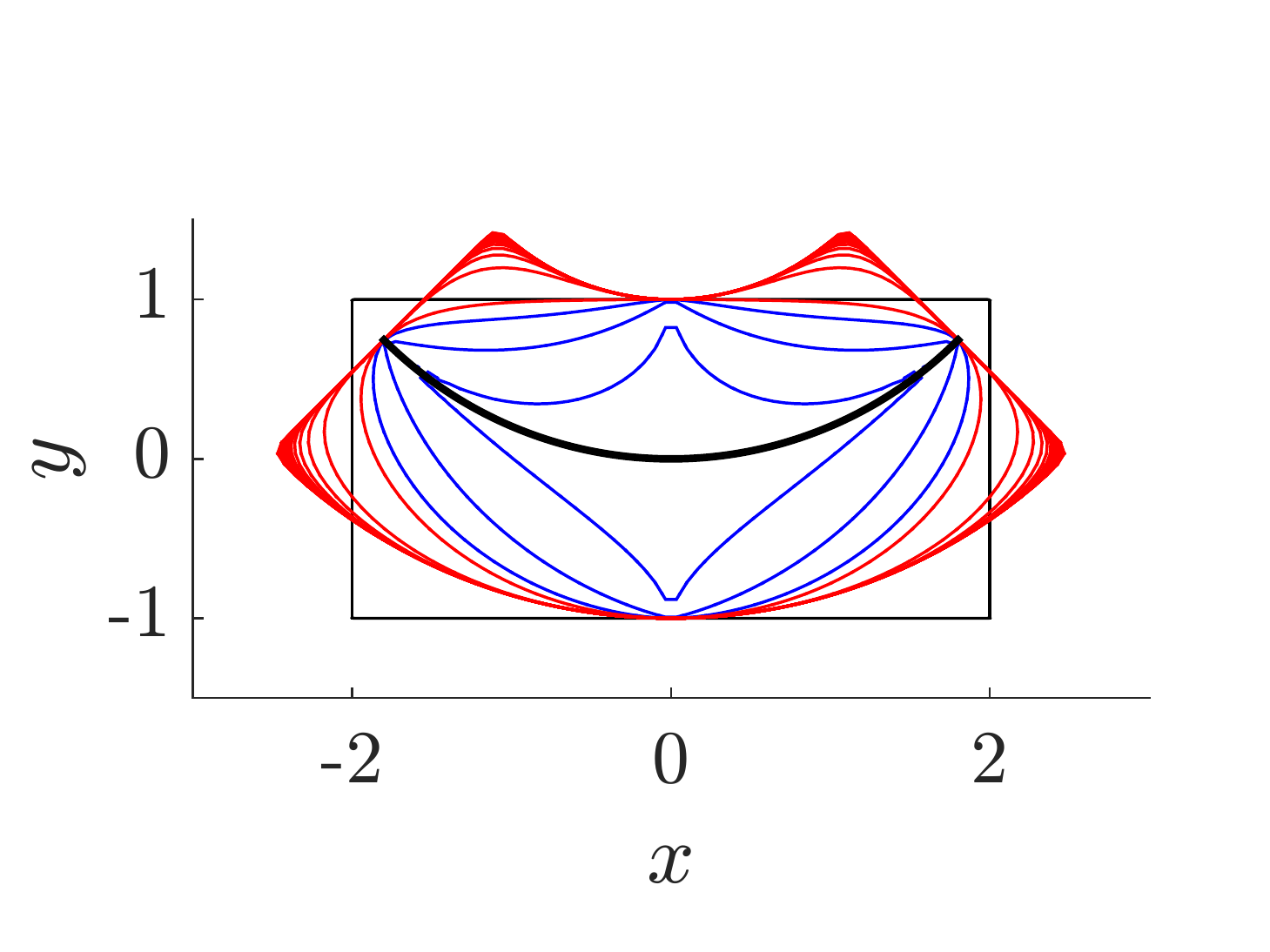}}\label{fig:Lp_Bent_levelset_negative}}
  %\vfill
  %\subfloat[Level sets for $p=10$]
    %{\includegraphics[width=1.75in]{Lp_Bent_levelset_norm_1} \label{fig:Lp_Bent_levelset_norm}}
    %\hfill
	%\caption{Geometrical interpretation of weighted polar $L_p$ in $2D$}
%\label{fig:polar_Lp_2d}
%\end{figure}
\begin{figure}[t!]
  \centering
  \hfill
	\subfloat[(top) $\roboCurvature>0$ and (bottom) $\roboCurvature<0$]{
	\begin{tikzpicture}
\node[anchor=south] (russell.south)
    {\includegraphics[width=.22\textwidth, clip=true,trim=0.1in 0.3in 0.9in 1.0in]{Lp_Bent_levelset_positive_1}};
\node[anchor=north] (whitehead.south)
    {\includegraphics[width=.22\textwidth, clip=true,trim=0.1in 0.3in 0.9in 1.0in]{Lp_Bent_levelset_negative_1}};
\end{tikzpicture}\label{fig:Lp_Bent_levelset_positive_negative}}
\subfloat[Level sets for $p=10$]
    {{\includegraphics[width=0.25\textwidth, clip=true,trim=0.5in 0.1in 0.4in 0.3in]{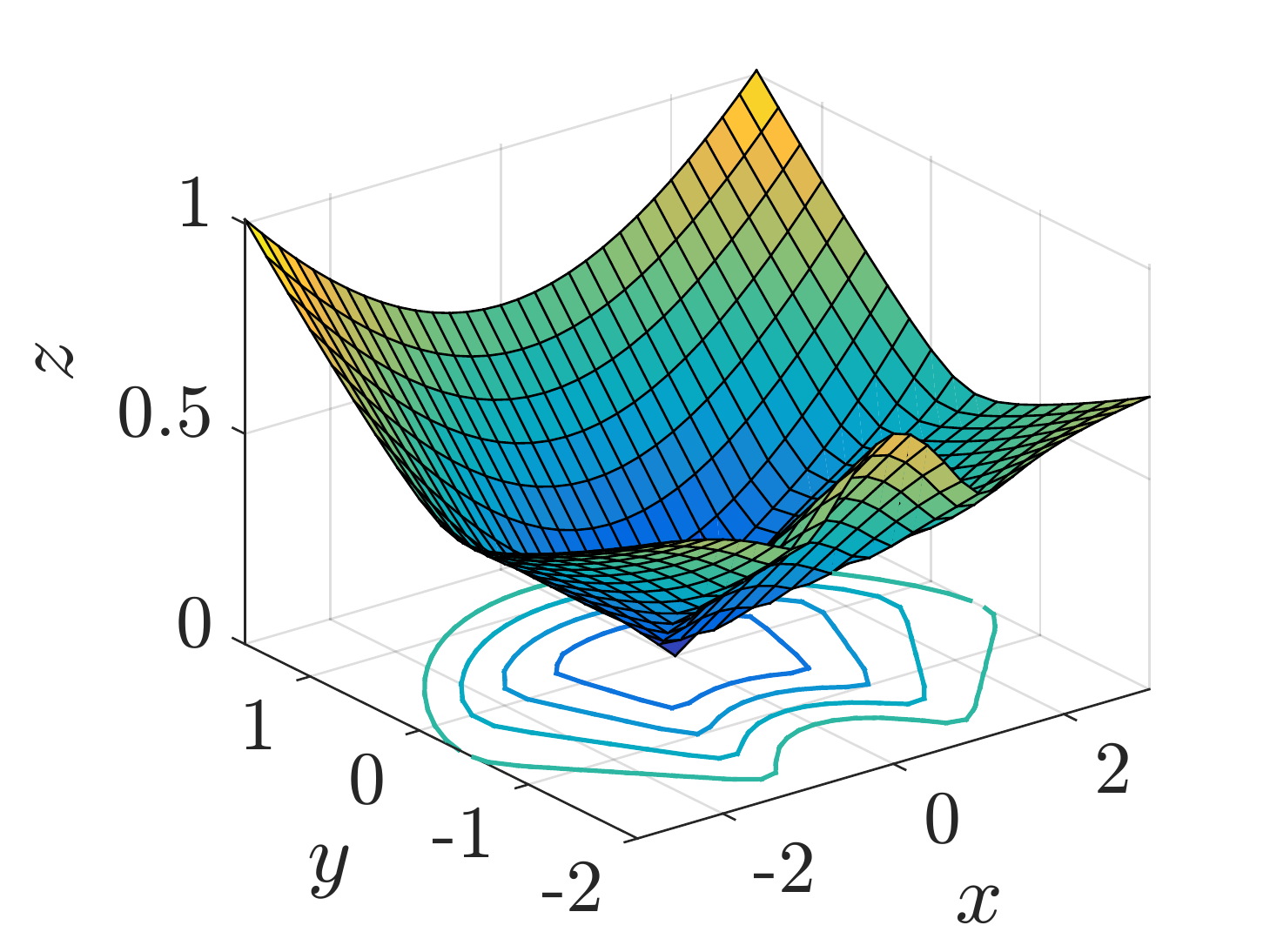}} \label{fig:Lp_Bent_levelset_norm}}
		
	\caption{Geometrical interpretation of weighted polar $L_p$ in $2D$}
\label{fig:polar_Lp_2d}
\end{figure}
 %\begin{figure}[t!]
  %\captionsetup[subfigure]{}
  %\centering
  %\subfloat[$|\roboCurvature|$-levelset with $p=4$]
    %{\includegraphics[width=1.75in, clip=true,trim=0.15in 0in 0.55in 0.45in]{Lp_Bent_levelset_3d_positive_P4}\label{fig:Lp_Bent_levelset_3d_positive_P4}}
  %\subfloat[$|\roboCurvature|$-levelset with $p=10$]
    %{\includegraphics[width=1.75in, clip=true,trim=0.15in 0in 0.55in 0.45in]{Lp_Bent_levelset_3d_positive_P10}\label{fig:Lp_Bent_levelset_3d_positive_P10}}
  %%\vfill
	%%\subfloat[$|\roboCurvature|$-levelset with $p=20$]
    %%{\includegraphics[width=1.75in, clip=true,trim=0.15in 0in 0.55in 0.45in]{Lp_Bent_levelset_3d_positive_P20} \label{fig:Lp_Bent_levelset_3d_positive_P20}}
	%\caption{Approximation of bent cuboid with weighted polar $L_p$ function}
%\label{fig:polar_Lp_3d}
%\end{figure}

To approximate bent cuboids, this section introduces a weighted $L_p$ norm
in polar coordinates.  Analysis of the planar bent rectangle case
establishes the important geometry associated to the polar space and its
connection to the original shape.  Extending the planar results to
$\mathbb{R}^3$ will then involve extruding the planar bent rectangle in the
vertical direction, normal to the plane.  A modified weighted, polar $L_p$
norm recovers the $3D$ surface for bent cuboids.
%In this section, we introduce an approximated surface model for bent cuboid robot introduced in \S\ref{sec:Model}. This new approach is inspired by considering the weighted $L_p$ norm in the polar coordinate. The bent rectangle in $\mathbb{R}^2$ will be first considered, and extended to the bent cuboid in $\mathbb{R}^3$.

\subsubsection{Bent rectangle}
Let $\roboCurvature\neq 0$ be given, and $\halfL$ be fixed. 
Define the coordinate transformation 
$\mathcal{T}_\roboCurvature:\mathbb{R}^2\to\mathbb{R}^2$ to be
\begin{equation} \label{eqn:curvature_isomorphism}
  \mathcal{T}_\roboCurvature(\begin{pmatrix}\vCoordX \\ \vCoordY\end{pmatrix})
    := \begin{pmatrix}\roboCurvature\vCoordX \\ \roboCurvature\vCoordY+1
       \end{pmatrix},
\end{equation}
for every $\vPose:=(\vCoordX,\vCoordY)\in\mathbb{R}^2$.  The polar coordinates of $\vPose$, transformed by $\mathcal{T}_\roboCurvature$, are
\begin{eqnarray} 
  \label{eqn:R_polar}
  R_{\mathcal{T}_\roboCurvature}(\vPose) &:=&
    \sqrt{(\roboCurvature\vCoordX)^2+(\roboCurvature\vCoordY+1)^2},
  \\
  \label{eqn:theta_polar}
  \theta_{\mathcal{T}_\roboCurvature}(\vPose) & := &
    \arctan{(\left(\roboCurvature\vCoordY+1)/\roboCurvature\vCoordX\right)}.
\end{eqnarray} 
%Now a new scalar valued function, $\Phi_{(\halfL, \roboCurvature, p)}:\mathbb{R}^2\to\mathbb{R}$, on $\mathbb{R}^2$ is proposed.
\begin{definition}[Weighted polar $L_p$] \label{defn:polar_Lp_}
  The \emph{$(\halfL, \roboCurvature)$-weighted polar $L_p$} function is the
  positive definite function 
  $\Phi_{(\halfL, \roboCurvature, p)}:\mathbb{R}^2\to\mathbb{R}$,
  \begin{equation} \label{eqn:polar_Lp_SE2}
  \Phi_{(\halfL, \roboCurvature, p)}(\vPose) :=
    ( ({|R_{\mathcal{T}_\roboCurvature}(\vPose)-1|}/{\halfL_2})^p
    + (|\theta_{\mathcal{T}_\roboCurvature}(\vPose)-\theta_0|/\halfL_1)^p
    )^{1/p},
\end{equation}
where $\theta_0=\sign(\roboCurvature)\cdot\pi/2$.
\end{definition}
The function $\sign(\roboCurvature)=1$ if $\roboCurvature\geq 0$ and $-1$ if $\roboCurvature<0$.  When the parameters $(\halfL, \roboCurvature)$ are known, we will call the value function of Definition~\ref{defn:polar_Lp_} the weighted polar $L_p$ function. 

%Similar to the rectangular robot in $2D$ where the surface is approximated by the $1$-level set of the $\halfL$-weighted $L_p$ norm, 
The surface of the bent rectangular robot is approximated by the $|\roboCurvature|$-level set of $\Phi_{(\halfL,  \roboCurvature, p)}$. Several examples of the $|\roboCurvature|$-level set of $\Phi_{(\halfL, \roboCurvature, p)}$ with different $p$ values are shown in Figure~\ref{fig:Lp_Bent_levelset_positive_negative}, where $\halfL=(2,1)$ is chosen with $\roboCurvature=0.3927$ and $\roboCurvature=-0.3927$, respectively. The blue contours represents the $p$ value changes from $0.6$ (innermost) to $1.6$ (outermost) with step size $0.5$, and the red contours represents the changes from $2$ (innermost) to  $40$ (outermost) with step size $2$. 

The $((2,1), 0.3927)$-weighted polar $L_p$ function with $p=10$ and its different level sets are shown in Figure~\ref{fig:Lp_Bent_levelset_norm} where the contours projected on $(x,y)$ plane corresponds to the levels from $0.2$ (innermost) to $0.5$ (outermost). 
%The bent geometry of the level curves is somehow preserved since each point on the curve propagates to its normal direction of the surface as the level increases. 

\begin{lemma} \label{lem:polar_Lp}
  For $p \ge 2$ even, 
  $\Phi_{(\halfL, \roboCurvature, p)}$ is a positive definite function;
%  (b) $\Phi_{(\halfL, \roboCurvature, p)}$ is not positive homogeneous with respect to to scalar multiplication. 
\end{lemma}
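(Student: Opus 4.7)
I would prove positive definiteness by verifying nonnegativity pointwise and then showing the zero set is a single point.

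First, I would observe that because $p \geq 2$ is even, the two summands $(|R_{\mathcal{T}_\roboCurvature}(\vPose)-1|/\halfL_2)^p$ and $(|\theta_{\mathcal{T}_\roboCurvature}(\vPose)-\theta_0|/\halfL_1)^p$ appearing in (\ref{eqn:polar_Lp_SE2}) are even powers of absolute values divided by positive constants, hence nonnegative. Their sum is nonnegative, and its $p$-th root is well-defined and nonnegative. Therefore $\Phi_{(\halfL, \roboCurvature, p)}(\vPose) \geq 0$ for every $\vPose \in \mathbb{R}^2$.

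Next, because both summands are nonnegative, $\Phi_{(\halfL, \roboCurvature, p)}(\vPose) = 0$ is equivalent to the simultaneous conditions $R_{\mathcal{T}_\roboCurvature}(\vPose) = 1$ and $\theta_{\mathcal{T}_\roboCurvature}(\vPose) = \theta_0$. Interpreting these in the transformed coordinates of (\ref{eqn:curvature_isomorphism}), the polar pair $(R,\theta) = (1, \sign(\roboCurvature)\pi/2)$ corresponds to the Cartesian point $(\roboCurvature\vCoordX, \roboCurvature\vCoordY+1) = (0, \sign(\roboCurvature))$. Using $\roboCurvature \neq 0$ to invert, one solves $\vCoordX = 0$ and $\vCoordY = (\sign(\roboCurvature)-1)/\roboCurvature$, a single point in $\mathbb{R}^2$ at which $\Phi$ vanishes.

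The main subtlety I expect lies in Step 2, at the vertical line $\roboCurvature \vCoordX = 0$ where the single-argument arctan in (\ref{eqn:theta_polar}) is singular: the argument for uniqueness must interpret $\theta_{\mathcal{T}_\roboCurvature}$ through the appropriate one-sided limit so that the level $\theta = \sign(\roboCurvature)\pi/2$ selects exactly one half-line rather than two. Once this convention is fixed, the intersection $\{R_{\mathcal{T}_\roboCurvature}=1\} \cap \{\theta_{\mathcal{T}_\roboCurvature}=\theta_0\}$ is a single point, and combined with pointwise nonnegativity this establishes positive definiteness.
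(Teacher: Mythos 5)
Your overall strategy (nonnegativity from the even power $p$, then pinning down the zero set of $\Phi_{(\halfL,\roboCurvature,p)}$) is the same as the paper's, but your identification of the zero set is wrong for $\roboCurvature<0$, and this breaks the proof. Positive definiteness requires $\Phi_{(\halfL,\roboCurvature,p)}$ to vanish at the origin of the robot frame and nowhere else; your computation places the unique zero at $\vCoordX=0$, $\vCoordY=(\sign(\roboCurvature)-1)/\roboCurvature$, which is $(0,-2/\roboCurvature)\neq\mathbf{0}$ when $\roboCurvature<0$. If that really were the zero, the lemma would be false (indeed, under your convention $\Phi_{(\halfL,\roboCurvature,p)}(\mathbf{0})=\pi/\halfL_1>0$ for negative curvature). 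The source of the error is treating the single-argument $\arctan$ in (\ref{eqn:theta_polar}) as a two-argument $\operatorname{atan2}$: you decode $(R,\theta)=(1,\sign(\roboCurvature)\pi/2)$ as the transformed point $(0,\sign(\roboCurvature))$, but $\arctan(b/a)$ cannot distinguish $(a,b)$ from $(-a,-b)$, and the convention forced by the bending construction in Appendix~\ref{app:polar} (where the rectangle center $u_x=u_y=0$ must correspond to $\theta_{\mathcal{T}_\roboCurvature}=\theta_0$, $R_{\mathcal{T}_\roboCurvature}=1$) assigns $\theta_{\mathcal{T}_\roboCurvature}(\mathbf{0})=\sign(\roboCurvature)\pi/2$ for \emph{either} sign of $\roboCurvature$, even though $\mathcal{T}_\roboCurvature(\mathbf{0})=(0,1)$ always lies on the upper half-axis.

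The correct route, which is the paper's: $\theta_{\mathcal{T}_\roboCurvature}(\vPose)=\theta_0=\pm\pi/2$ is attainable only when the argument of the $\arctan$ is infinite, which forces $\roboCurvature\vCoordX=0$ and hence $\vCoordX=0$; then $R_{\mathcal{T}_\roboCurvature}(\vPose)=1$ gives $(\roboCurvature\vCoordY+1)^2=1$, i.e.\ $\vCoordY=0$ or $\vCoordY=-2/\roboCurvature$, and the second root must be discarded because there $\theta_{\mathcal{T}_\roboCurvature}=-\theta_0$, so the angular term contributes $\pi/\halfL_1>0$ and $\Phi$ does not vanish. (The paper states the conclusion $\vCoordY=0$ directly; ruling out the spurious radial root is the one place where the angle condition must be invoked a second time.) Finally, positive definiteness also requires checking $\Phi_{(\halfL,\roboCurvature,p)}(\mathbf{0})=0$, which the paper does via $\lim_{\alpha\to\infty}\arctan{\alpha}=\pi/2$; your writeup omits this verification, and it is exactly the step at which the sign convention at the singular line must be fixed consistently with the geometry.
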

\begin{proof}
Since $p$ is even number, $\Phi_{(\halfL, \roboCurvature,
p)}(\vPose)\geq 0$ holds for every $\vPose\in\mathbb{R}^2$. Also
$\Phi_{(\halfL, \roboCurvature, p)}(\textbf{0})=0$ holds for zero vector,
$\textbf{0}\in\mathbb{R}^2$ from
$\lim_{\alpha\to\infty}\arctan{\alpha}=\pi/2$. 
Now, suppose that $\Phi_{(\halfL, \roboCurvature, p)}(\vPose)= 0$ holds,
then $\theta_{\mathcal{T}_\roboCurvature}(\vPose)=\theta_0$ and
$R_{\mathcal{T}_\roboCurvature}(\vPose)=1$ follows from $p$ even.
From the first observation $\arctan{(\left(\roboCurvature\vCoordY+1)/\roboCurvature\vCoordX\right)}=\sign(\roboCurvature)\pi/2$,
which implies $\vCoordX=0$. From the second observation, $\vCoordY=0$. 
Therefore, $\Phi_{(\halfL, \roboCurvature, p)}(\vPose)= 0$ holds
if and only if $\vPose=\textbf{0}$. Hence, $\Phi_{(\halfL, \roboCurvature,
p)}$ is a positive definite function.
%\\
%(b) Pick $p=2$ and $\vPose=(1,0)^T$, $\roboCurvature=1$, and $\halfL=(1,1)$, then $\Phi_{((1,1), 1, 2)}(\vPose)=0.8879$. Let $\alpha=-1$ and $\overline{\vPose}=\alpha\vPose$, then $\Phi_{((1,1), 1, 2)}(\overline{\vPose})=2.3923$, which shows that
%\begin{equation*}
%\Phi_{((1,1), 1, 2)}(\overline{\vPose})\neq |\alpha|\Phi_{((1,1), 1, 2)}(\vPose).
%\end{equation*} 
%Hence, $\Phi_{(\halfL, \roboCurvature, p)}$ is not positive homogeneous with respect to scalar multiplication.
\end{proof}
%By examining Figure~\ref{fig:Lp_Bent_levelset_norm} graphically, the weighted polar $L_p$ function can be considered as a measure of the distance between the origin and a point in $2D$ space (The value of the function increases to the outward normal direction to the contours). However, the result (b) in Lemma~\ref{lem:polar_Lp} shows that the weighted polar $L_p$ is not a norm since it violates the homogeneity condition. Furthermore, the weighted polar $L_p$ function cannot induce the metric since it violates the symmetric property as shown in the proof. 
%
%Nevertheless, the asymmetry of the weighted polar $L_p$ function is desired since it provides a way to distinguish two equal length vectors (in Eucledian metric) by evaluating $\Phi_{(\halfL, \roboCurvature, p)}$ at each vector. The surface of the bent rectangular robot is approximated by the $|\roboCurvature|$-level set of $\Phi_{(\halfL, \roboCurvature, p)}$ function, and so the distinct evaluations could infer which vector is closer to (or even contained in) the non-convex shaped  robot.
%
With regards to collision detection, 
%Furthermore, this representation can be used for collision detection with a
%point obstacle. 
evaluation of the weighted polar $L_p$ function at a point inside the robot is 
less than $|\roboCurvature|$, and at a point outside is greater than 
$|\roboCurvature|$. 
%followed by the fact that the
%weighted polar $L_p$ is positive definite, and 
Additionally, the $|\roboCurvature|$-level curves do not self intersect. 
Therefore, all safe configurations of the bent rectangular robot to a point
obstacle are simply obtained by evaluating the weighted $L_p$ function at
the obstacle point in the robot frame and checking if it is greater than
$|\roboCurvature|$. 
The $|\roboCurvature|$-level set of $(\halfL,\roboCurvature)$-weighted polar
$L_p$ function approaches the bent rectangular robot for $p \rightarrow
\infty$. 
\begin{theorem} \label{thm:polar_Lp_2D}
The boundary of the bent rectangular robot, $\partial B_{((\halfL_1,\halfL_2),\roboCurvature)}$, is equivalent to the $|\roboCurvature|$-level set of $\Phi_{(\halfL,\roboCurvature,p)}$, 
\begin{equation}
\label{eqn:kappa_level_2D}
\partial B_{((\halfL_1,\halfL_2),\roboCurvature, p)}:=\{\vPose\in\mathbb{R}^2| \Phi_{(\halfL,\roboCurvature,p)}(\vPose)=|\roboCurvature| \},
\end{equation}
as $p$ approaches $\infty$
\end{theorem}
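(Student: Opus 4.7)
The plan is to exploit the standard fact that the $L_p$ norm collapses to the $L_\infty$ norm as $p\to\infty$, so that the $|\roboCurvature|$-level set of $\Phi_{(\halfL,\roboCurvature,p)}$ tends (at least pointwise) to the joint sublevel set $\{|R_{\mathcal{T}_\roboCurvature}-1|/\halfL_2\le|\roboCurvature|\}\cap\{|\theta_{\mathcal{T}_\roboCurvature}-\theta_0|/\halfL_1\le|\roboCurvature|\}$, and then identify this as the bent rectangle via the geometric meaning of $\mathcal{T}_\roboCurvature$. I would first apply the elementary limit $\lim_{p\to\infty}(a^p+b^p)^{1/p}=\max(a,b)$ for $a,b\ge 0$ to the two components of $\Phi_{(\halfL,\roboCurvature,p)}$, noting that on $\mathbb{R}^2$ minus the degenerate axis $\{v_x=0\}$ the maps $R_{\mathcal{T}_\roboCurvature}$ and $\theta_{\mathcal{T}_\roboCurvature}$ are smooth so the convergence is uniform on compact subsets. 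The degenerate axis is handled by the limit $\arctan(\pm\infty)=\pm\pi/2=\theta_0$, which forces the $\theta$-term to vanish there, matching the value the bent-rectangle intersects the axis.

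Next I would unpack the geometric content of the limiting inequality. The map $\mathcal{T}_\roboCurvature$ scales the robot frame by $\roboCurvature$ and translates it by $(0,1)$, so the transformed origin is the point $(0,1)$ and $R_{\mathcal{T}_\roboCurvature}(v)=|\roboCurvature|\cdot\|v-(0,-1/\roboCurvature)\|$. Thus $|R-1|\le|\roboCurvature|\halfL_2$ is equivalent to the original point lying in the annulus centered at the center of curvature $(0,-1/\roboCurvature)$ with radii $1/|\roboCurvature|\pm\halfL_2$. Similarly, $\theta_{\mathcal{T}_\roboCurvature}$ is the angle measured from that same center of curvature, shifted so that $\theta_0=\sign(\roboCurvature)\pi/2$ points toward the robot; the bound $|\theta-\theta_0|\le|\roboCurvature|\halfL_1$ then carves out a wedge of angular half-width $|\roboCurvature|\halfL_1$.

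Once the limiting level set is identified as the boundary of this annular sector, I would match it to $\partial B_{((\halfL_1,\halfL_2),\roboCurvature)}$ using Definition~\ref{def:centerline} and Assumption~\ref{asmp:invariant_center}. The centerline of the bent rectangle is, by construction, an arc of radius $1/|\roboCurvature|$ about $(0,-1/\roboCurvature)$ of length $2\halfL_1$; by \eqref{eqn:bending_angle} the subtended angle is $2\halfL_1|\roboCurvature|$, which coincides with the angular width of the wedge. Normal cross-sections are line segments of half-length $\halfL_2$, which coincide with the radial thickness of the annulus. Therefore the annular sector equals the bent rectangle, and taking boundaries gives the claim.

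The main obstacle I anticipate is the argument that the level set of the limiting function agrees with the Hausdorff/pointwise limit of the level sets, rather than just an a.e.\ or inclusion statement; this requires a short monotonicity or implicit function argument to rule out spurious limit points, especially across the degenerate axis $\{v_x=0\}$ where the $\arctan$ representation has to be taken as a continuous extension. A minor secondary issue is verifying the sign choice $\theta_0=\sign(\roboCurvature)\pi/2$ treats both bending directions symmetrically, which I would handle by observing that replacing $\roboCurvature$ with $-\roboCurvature$ reflects the annular sector through the $x$-axis, as expected.
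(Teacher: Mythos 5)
Your argument is correct in substance but follows a genuinely different route from the paper's. You work entirely in the bent (robot-frame) coordinates: send $p\to\infty$ inside $\Phi_{(\halfL,\roboCurvature,p)}$ using $(a^p+b^p)^{1/p}\to\max(a,b)$, read off the limiting set as the boundary of the annular sector $\{\,|R_{\mathcal{T}_\roboCurvature}-1|\le|\roboCurvature|\halfL_2\,\}\cap\{\,|\theta_{\mathcal{T}_\roboCurvature}-\theta_0|\le|\roboCurvature|\halfL_1\,\}$, and then match radii and subtended angle to the bent rectangle via (\ref{eqn:bending_angle}). The paper instead goes the other way around the same change of variables: it constructs the explicit bending diffeomorphism $\mathcal{B}_\roboCurvature$ in (\ref{eqn:warping_x}) from the straight rectangle $B((\halfL_1,\halfL_2),p)$ onto its bent image, verifies that under this map $(|\roboCurvature|u_x,|\roboCurvature|u_y)=(\theta_{\mathcal{T}_\roboCurvature}(\overline{u})-\theta_0,\,R_{\mathcal{T}_\roboCurvature}(\overline{u})-1)$, so that $\Phi_{(\halfL,\roboCurvature,p)}\circ\mathcal{B}_\roboCurvature$ is exactly $|\roboCurvature|$ times the Cartesian weighted $L_p$ norm, and then invokes the known $p\to\infty$ behavior of that norm, pushing the conclusion forward through the diffeomorphism. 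The practical difference is precisely the obstacle you flag at the end: because the paper establishes a level-set-to-level-set bijection at every \emph{finite} $p$, the question of whether the limit of the level sets equals the level set of the limit is reduced to the standard Cartesian case and never has to be confronted in the curved coordinates (including across the axis $\{\vCoordX=0\}$, where the $\arctan$ branch is absorbed into the diffeomorphism's domain). Your direct-limit version would need to supply that monotonicity/uniform-convergence argument explicitly to be complete, whereas your geometric identification of the annular sector is cleaner and more self-explanatory than the paper's closing appeal to ``computing (\ref{eqn:warping_x}) for the actual bent rectangular robot.'' If you add the finite-$p$ correspondence (or equivalently observe that $\Phi_{(\halfL,\roboCurvature,p)}$ is a strictly increasing function of the radial and angular deviations, so its $|\roboCurvature|$-level sets are nested and converge monotonically), your proof closes the gap and stands on its own.
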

\begin{proof}
See APPENDIX \ref{app:polar}.
\end{proof}
The following proposition shows that Assumption~\ref{asmp:invariant_center} holds for the approximate, weighted polar $L_p$ model. 
\begin{proposition} \label{prop:bend_rectangular}
  The LoC of $\partial B_{((\halfL_1,\halfL_2),\roboCurvature,p)}$ is equal to the LoC of original rectangular robot with half-lengths $(\halfL_1,\halfL_2)$. 
\end{proposition}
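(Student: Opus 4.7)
The plan is to identify the centerline of the approximate bent rectangle $\partial B_{((\halfL_1,\halfL_2),\roboCurvature,p)}$ explicitly within the polar-type coordinates $(R_{\mathcal{T}_\roboCurvature},\theta_{\mathcal{T}_\roboCurvature})$, and then compute its arc length in the original Cartesian frame, comparing the result to the LoC of the straight rectangle, which is $2\halfL_1$.

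First, I would argue that the centerline is the arc $\{\vPose : R_{\mathcal{T}_\roboCurvature}(\vPose)=1\}$ intersected with the interior $\{\Phi_{(\halfL,\roboCurvature,p)}\le|\roboCurvature|\}$. The variables $(R_{\mathcal{T}_\roboCurvature}, \theta_{\mathcal{T}_\roboCurvature})$ are, by construction, standard polar coordinates on the transformed $(u,w) := (\roboCurvature\vCoordX, \roboCurvature\vCoordY+1)$ plane, so constant-$R$ curves and constant-$\theta$ rays form an orthogonal net; hence the normal direction at any point of the arc $R=1$ is a ray of constant $\theta_{\mathcal{T}_\roboCurvature}$. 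Because $p$ is even, $\Phi_{(\halfL,\roboCurvature,p)}$ depends on $R-1$ only through the even power $|R-1|^p$, so the cross-section of the interior at any fixed $\theta_{\mathcal{T}_\roboCurvature}$ is a segment in $R$ symmetric about $R=1$. This verifies both clauses of Definition~\ref{def:centerline} in the planar setting: the normal cross-sections are (half-length) segments, and the curve $R=1$ passes through each of their centers.

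Second, I would translate the centerline back to $(\vCoordX,\vCoordY)$: the equation $R_{\mathcal{T}_\roboCurvature}(\vPose)=1$ unfolds to $(\roboCurvature\vCoordX)^2+(\roboCurvature\vCoordY+1)^2=1$, which is the circle of radius $1/|\roboCurvature|$ centered at $(0,-1/\roboCurvature)$. The endpoints of the centerline inside the robot are where the boundary $\Phi_{(\halfL,\roboCurvature,p)}=|\roboCurvature|$ meets $R=1$; there the radial term vanishes and the condition reduces to $|\theta_{\mathcal{T}_\roboCurvature}-\theta_0|/\halfL_1 = |\roboCurvature|$, so the angular parameter sweeps the interval $[\theta_0-\halfL_1|\roboCurvature|,\theta_0+\halfL_1|\roboCurvature|]$ of total measure $2\halfL_1|\roboCurvature|$.

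Finally, the arc length on a circle of radius $1/|\roboCurvature|$ over an angular span of $2\halfL_1|\roboCurvature|$ is $(1/|\roboCurvature|)\cdot 2\halfL_1|\roboCurvature|=2\halfL_1$, which equals the LoC of the original rectangular robot. The main obstacle is the first step, namely rigorously identifying $R_{\mathcal{T}_\roboCurvature}=1$ as the centerline per Definition~\ref{def:centerline}; once the evenness of $\Phi$ in $R-1$ and the orthogonality of constant-$R$ arcs with constant-$\theta$ rays are in place, the arc-length computation is immediate and, pleasingly, independent of $p$.
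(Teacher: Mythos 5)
Your proof is correct and ends with exactly the same computation as the paper's --- an arc of radius $1/|\roboCurvature|$ subtending the angle $\theta_\roboCurvature=2\halfL_1|\roboCurvature|$ from (\ref{eqn:bending_angle}), hence of length $2\halfL_1$ independently of $p$ --- but you reach the centerline by a genuinely different route. The paper takes the centerline to be the image of the straight centerline $C(s)=(s,0)$ of the regular rectangle under the explicit bending map $\mathcal{B}_\roboCurvature$ of (\ref{eqn:warping_x}), which yields the parameterized arc (\ref{eqn:bent_centerline}); that this pushforward \emph{is} the centerline of the bent shape is inherited from the diffeomorphism correspondence of Appendix~\ref{app:polar} rather than re-checked against Definition~\ref{def:centerline}. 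You instead characterize the centerline intrinsically as $\{\vPose: R_{\mathcal{T}_\roboCurvature}(\vPose)=1\}\cap\{\Phi_{(\halfL,\roboCurvature,p)}\leq|\roboCurvature|\}$ and verify Definition~\ref{def:centerline} directly from the orthogonality of the polar net and the evenness of $\Phi_{(\halfL,\roboCurvature,p)}$ in $R_{\mathcal{T}_\roboCurvature}-1$; this is self-contained, makes the $p$-independence of the answer transparent, and avoids invoking $\mathcal{B}_\roboCurvature$ at all, at the cost of not producing the explicit parameterization that the paper reuses elsewhere (e.g.\ for the corner points $\mathcal{B}_\roboCurvature(u_i)$). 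One small caveat, which applies equally to the paper's own argument: for finite $p$ the normal cross-section at angle $\theta\neq\theta_0$ has Cartesian half-length $\halfL_2\bigl(1-(|\theta-\theta_0|/(\halfL_1|\roboCurvature|))^p\bigr)^{1/p}<\halfL_2$, so clause~1 of Definition~\ref{def:centerline} holds exactly only as $p\to\infty$; your symmetry argument for clause~2 and the arc-length computation are unaffected.
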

\begin{proof}
APPENDIX~\ref{app:polar} defines the centerline for the $((\halfL_1,\halfL_2),\roboCurvature)$-weighted polar $L_p$ function as a parameterized curve, (\ref{eqn:bent_centerline}).  The centerline is an arc with radius of curvature $1/|\roboCurvature|$, whose polar angle varies in the range $[\sign(\roboCurvature)\pi/2-\theta_\roboCurvature/2,\sign(\roboCurvature)\pi/2+\theta_\roboCurvature/2]$ with $\theta_\roboCurvature$ the desired angle of curvature in (\ref{eqn:bending_angle}). Computing the length of the arc, the LoC of $\partial B_{((\halfL_1,\halfL_2),\roboCurvature,p)}$ is found to be $2\halfL_1$, as needed by Assumption~\ref{asmp:invariant_center}.  \end{proof}

%The centerline of bent rectangular robot can be induced by Definition~\ref{def:centerline} where the LoC remains the same by $2\halfL_1$, and the invariant property for LoC in Assumptions~\ref{asmp:invariant_center} still holds. 
%Centerlines examples are shown as black curves in
%Figure~\ref{fig:Lp_Bent_levelset_positive} and
%Figure~\ref{fig:Lp_Bent_levelset_negative}, for positive and negative
%curvatures, respectively.
Centerlines examples are shown as black curves in
Figure~\ref{fig:Lp_Bent_levelset_positive_negative} for positive and negative
curvatures, respectively.

\subsubsection{Bent cuboid}

In this section, the bent cuboid surface model is approximated by vertically expanding the bent rectangle in the robot's frame. Inspired by the fact that the different level sets still preserve the shape of the bending effect as shown in Figure~\ref{fig:Lp_Bent_levelset_norm}, the multiple level of $\Phi_{(\halfL,\roboCurvature,p)}$ will be considered as a $X-Y$ section at different $z$ values. 

Without loss of generality, let the bent rectangular robot with the shape
parameters, $((\halfL_1,\halfL_2),\roboCurvature)$, be the $X-Y$ section at
$z=0$ of the bent cuboid robot with the shape parameters,
$((\halfL_1,\halfL_2,\halfL_3),\roboCurvature)$ in the robot's frame. 

A new positive definite function is given as follows. 
\begin{definition}[Weighted polar $L_p$ in $3D$]
\label{defn:polar_Lp_SE3}
The \emph{$(\halfL, \roboCurvature)$-weighted polar $L_p$ function in $3D$}.  is the positive definite function, 
$\Psi_{(\halfL, \roboCurvature, p)}:\mathbb{R}^3\to\mathbb{R}$, 
\begin{equation} \label{eqn:polar_Lp_SE3}
  \Psi_{(\halfL, \roboCurvature, p)}(\vPose) :=
   ( (\Phi_{((\halfL_1,\halfL_2), \roboCurvature, p)}(\vCoordX,\vCoordY))^p
     + (|\roboCurvature|\vCoordZ/\halfL_3)^p)^{1/p},
\end{equation}
where $\vPose=(\vCoordX,\vCoordY,\vCoordZ)\in\mathbb{R}^3$.
\end{definition}
Similar to the bent rectangle case, the $|\roboCurvature|$-level set of $\Psi_{(\halfL, \roboCurvature, p)}$ represents the approximated bent cuboid. Several examples of the $|\roboCurvature|$- level sets of $\Psi_{((2,1,1),\roboCurvature, p)}$ for $\roboCurvature=0.3927$ with different $p$ values are shown in Figure~\ref{fig:Lp_Bent_levelset_3d_positive_P4}-\ref{fig:Lp_Bent_levelset_3d_positive_P10}. The red boundary represents the original bent cuboid robot, and the blue surface represents the approximation with different $p$ values. As $p$ increases higher, the approximated bent cuboid model approaches closer to the original bounds. An example when $p=200$ is shown in Figure~\ref{fig:Lp_Bent_levelset_3d_positive}.
The $|\roboCurvature|$-level set of $\Psi_{(\halfL, \roboCurvature, p)}$ is a proper approximation to the bent cuboid robot in the robot's frame.

\begin{theorem} \label{thm:polar_Lp_3D}
  The surface of the bent cuboid robot, $\partial B_{((\halfL_1,\halfL_2,\halfL_3),\roboCurvature)}$, is equivalent to the $|\roboCurvature|$-level set of $\Psi_{(\halfL,\roboCurvature,p)}$, 
  \begin{equation} \label{eqn:kappa_level_3D}
    \partial B_{((\halfL_1,\halfL_2,\halfL_3),\roboCurvature, p)}
      := \{\vPose\in\mathbb{R}^3 | 
          \Psi_{(\halfL,\roboCurvature,p)}(\vPose)=|\roboCurvature| \},
  \end{equation}
  as $p$ approaches $\infty$
\end{theorem}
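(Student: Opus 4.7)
The plan is to reduce the three-dimensional statement to the planar Theorem~\ref{thm:polar_Lp_2D} by exploiting the fact that $\Psi_{(\halfL,\roboCurvature,p)}$ is itself a weighted $L_p$ combination of $\Phi_{((\halfL_1,\halfL_2),\roboCurvature,p)}$ and $|\vCoordZ|$, together with the elementary limit $(a^p+b^p)^{1/p}\to\max(a,b)$ as $p\to\infty$ for $a,b\ge 0$.

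First I would normalize the level-set condition $\Psi_{(\halfL,\roboCurvature,p)}(\vPose)=|\roboCurvature|$ by dividing through by $|\roboCurvature|$, which produces
\[
  \bigl(\Phi_{((\halfL_1,\halfL_2),\roboCurvature,p)}(\vCoordX,\vCoordY)/|\roboCurvature|\bigr)^p + \bigl(|\vCoordZ|/\halfL_3\bigr)^p = 1.
\]
Thus the $|\roboCurvature|$-level set of $\Psi_{(\halfL,\roboCurvature,p)}$ is the preimage of the unit $L_p$ sphere of $\mathbb{R}^2$ under the nonnegative map $\vPose\mapsto\bigl(\Phi/|\roboCurvature|,\ |\vCoordZ|/\halfL_3\bigr)$. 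As $p\to\infty$, the unit $L_p$ sphere in $\mathbb{R}^2$ tends to the boundary of the unit square, so in the limit the condition becomes $\max\bigl(\Phi/|\roboCurvature|,\ |\vCoordZ|/\halfL_3\bigr)=1$ with both entries in $[0,1]$. This cleanly partitions the limiting surface into two pieces: (i) a \emph{lateral} piece where $\Phi_{((\halfL_1,\halfL_2),\roboCurvature)}=|\roboCurvature|$ and $|\vCoordZ|\le\halfL_3$, which by Theorem~\ref{thm:polar_Lp_2D} is the bent-rectangle boundary at every horizontal cross-section, i.e., the side surface of the bent cuboid, and (ii) a \emph{cap} piece where $|\vCoordZ|=\halfL_3$ and $\Phi_{((\halfL_1,\halfL_2),\roboCurvature)}\le|\roboCurvature|$, which is the bent-rectangle region at $\vCoordZ=\pm\halfL_3$, i.e., the top and bottom faces. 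Their union is precisely $\partial B_{((\halfL_1,\halfL_2,\halfL_3),\roboCurvature)}$, yielding the theorem.

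The main obstacle is converting these pointwise and geometric observations into a rigorous level-set convergence statement, since pointwise convergence of $\Psi_{(\halfL,\roboCurvature,p)}$ to the max function does not automatically give Hausdorff convergence of the $|\roboCurvature|$-level sets. I would close this gap by using Lemma~\ref{lem:polar_Lp} together with a growth estimate to show that the $|\roboCurvature|$-level sets are compact and uniformly contained in a fixed bounded region for all sufficiently large $p$, and then invoking the uniform convergence of $(a^p+b^p)^{1/p}$ to $\max(a,b)$ on compact subsets of the nonnegative quadrant, combined with Theorem~\ref{thm:polar_Lp_2D}, to obtain the required level-set convergence on each horizontal slice and on the top/bottom caps.
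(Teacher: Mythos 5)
Your proposal follows essentially the same route as the paper's proof: both reduce the statement to Theorem~\ref{thm:polar_Lp_2D} by decomposing the $|\roboCurvature|$-level set of $\Psi_{(\halfL,\roboCurvature,p)}$ into horizontal cross-sections, using the limit $(a^p+b^p)^{1/p}\to\max(a,b)$ to show that each slice with $|\vCoordZ|<\halfL_3$ converges to the bent-rectangle boundary while the slices with $|\vCoordZ|=\halfL_3$ produce the top and bottom caps. Your closing remark about upgrading pointwise convergence of the function to actual convergence of the level sets is a refinement the paper does not spell out, but the underlying argument is the same.
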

\begin{proof}
It is enough to show that, for every $\vCoordZ\in [-\halfL_3,\halfL_3]$, the
approximated surface $\partial B_{((\halfL_1,\halfL_2),\roboCurvature,
p)}^{\vCoordZ}$ defined by
\begin{equation} \label{eqn:theorem2}
  \{\vPose\in\mathbb{R}^3|\Phi_{((\halfL_1,\halfL_2), \roboCurvature, p)}
    (\vCoordX,\vCoordY)=|\roboCurvature|(1-(\vCoordZ/\halfL_3)^p)^{1/p}\}
\end{equation}
is a bent rectangle with shape parameters $((\halfL_1,\halfL_2),\roboCurvature)$, as $p \rightarrow \infty$. If $|\vCoordZ|/\halfL_3< 1$, then
\begin{equation*}
  \lim_{p\to\infty} |\roboCurvature|(1-(\vCoordZ/\halfL_3)^p)^{1/p}=|\roboCurvature|,
\end{equation*}
 and so $\partial B_{((\halfL_1,\halfL_2),\roboCurvature, p)}^{\vCoordZ}$ is the $|\roboCurvature|$-level set of $\Phi_{((\halfL_1,\halfL_2), \roboCurvature, p)}$. Therefore, by Theorem~\ref{thm:polar_Lp_2D}, the level set is the bent rectangle with the desired shape parameters. 

If $|\vCoordZ|/\halfL_3=1$, then 
\begin{equation*}
\lim_{p\to\infty} \Psi_{(\halfL, \roboCurvature, p)}(\vPose)=\max{(\Phi_{((\halfL_1,\halfL_2), \roboCurvature, p)}(\vCoordX,\vCoordY), |\roboCurvature\vCoordZ|/\halfL_3)},
\end{equation*} 
by using the fact the $L_\infty$ norm is equivalent to the maximum absolute
coordinate. 
Therefore, the approximated surface, $\partial B_{((\halfL_1,\halfL_2),\roboCurvature, p)}^{\vCoordZ}$, is equal to the sublevel set, 
\begin{equation*}
\{\vPose\in\mathbb{R}^3|\Phi_{((\halfL_1,\halfL_2), \roboCurvature,
\infty)}(\vCoordX,\vCoordY)\leq|\roboCurvature| \text{ } \;\wedge\;  |\vCoordZ|/\halfL_3=1\},
\end{equation*}
which consists of the top and bottom faces of the bent cuboid as 
$p \rightarrow \infty$. 
\end{proof}
It follows that the approximated bent cuboid satisfies 
Assumption~\ref{asmp:invariant_center}. 
\begin{corollary} \label{coro:bend_cuboid}
  The LoC of $\partial B_{((\halfL_1,\halfL_2,\halfL_3),\roboCurvature,p)}$ is equal to the LoC of original regular cuboid robot with half-lenghts $(\halfL_1,\halfL_2,\halfL_3)$. 
\end{corollary}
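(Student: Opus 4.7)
The plan is to reduce the three-dimensional claim to the planar statement of Proposition~\ref{prop:bend_rectangular}. Observing the definition of $\Psi_{(\halfL,\roboCurvature,p)}$ in \eqref{eqn:polar_Lp_SE3}, the function decouples into the planar weighted polar $L_p$ contribution in $(\vCoordX,\vCoordY)$ and a pure $(|\roboCurvature|\vCoordZ/\halfL_3)^p$ term. In particular, $\Psi$ is invariant under $\vCoordZ \mapsto -\vCoordZ$, so the entire surface $\partial B_{((\halfL_1,\halfL_2,\halfL_3),\roboCurvature,p)}$ is symmetric about the $z=0$ plane. Moreover, its $\vCoordZ=0$ cross section is precisely the $|\roboCurvature|$-level set of $\Phi_{((\halfL_1,\halfL_2),\roboCurvature,p)}$, which by Theorem~\ref{thm:polar_Lp_2D} is the planar bent rectangle $\partial B_{((\halfL_1,\halfL_2),\roboCurvature,p)}$.

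Next, I would argue that the centerline of the 3D bent cuboid, as specified by Definition~\ref{def:centerline}, must lie in the plane $\vCoordZ=0$. Indeed, the normal-plane cross sections required by the definition are rectangles of half-lengths $(\halfL_2,\halfL_3)$, and their centers must be fixed under the $\vCoordZ\mapsto -\vCoordZ$ symmetry of the body; hence every centerline point has $\vCoordZ=0$. Restricted to that plane, the defining conditions of the 3D centerline become: the curve lies in $\{\vCoordZ=0\}$, the $\vCoordZ$-extent of the body at each centerline point is $[-\halfL_3,\halfL_3]$ (which holds at every interior planar point by the separable form of \eqref{eqn:polar_Lp_SE3}), and the orthogonal in-plane cross section must be a segment of half-length $\halfL_2$. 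This last condition is exactly the defining property of the centerline of the planar bent rectangle established in the proof of Proposition~\ref{prop:bend_rectangular} via equation \eqref{eqn:bent_centerline}.

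Therefore the 3D centerline coincides, as a curve in $\mathbb{R}^3$, with the planar bent-rectangle centerline embedded in $\{\vCoordZ=0\}$. Applying Proposition~\ref{prop:bend_rectangular}, its length equals $2\halfL_1$, which is exactly the LoC of the original regular cuboid with half-lengths $(\halfL_1,\halfL_2,\halfL_3)$ by Definition~\ref{def:centerline}. This establishes the corollary.

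The main obstacle I anticipate is justifying rigorously that the 3D centerline descends to the planar centerline, i.e., that the orthogonal cross sections required by Definition~\ref{def:centerline} really take the product form (planar width $\halfL_2$) $\times$ (vertical height $\halfL_3$). This relies on the separability exhibited in \eqref{eqn:polar_Lp_SE3} together with the $p\to\infty$ collapse argument already carried out inside Theorem~\ref{thm:polar_Lp_3D}, where the top/bottom faces at $|\vCoordZ|=\halfL_3$ and the lateral surface at $|\vCoordZ|<\halfL_3$ were analyzed separately; once that limit is invoked, the rectangular-cross-section property is immediate and the rest of the argument is straightforward.
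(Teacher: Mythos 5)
Your proposal is correct and follows essentially the same route as the paper: the paper's own proof likewise reduces the corollary to Theorem~\ref{thm:polar_Lp_3D} (each $X$--$Y$ section at a fixed $z$ is the approximated bent rectangle) combined with Proposition~\ref{prop:bend_rectangular}. Your additional symmetry argument locating the 3D centerline in the plane $\vCoordZ=0$ simply makes explicit a step the paper treats as immediate.
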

\begin{proof}
The proof is immediate from Theorem~\ref{thm:polar_Lp_3D} and Proposition~\ref{prop:bend_rectangular}, as it was shown that each $X-Y$ section for a given $Z$ level of the approximated robot is equivalent to the approximated bent rectangle.
\end{proof}

%\begin{itemize}
%\item (Def) Introduce weighted $L_p$ in polar coordinate in $2D$
%\\
%\item Derivation of the definition in the APPENDIX \ref{app:polar}
%\\
%\item (Def) Elongated over $z$ axis to generate bent cuboid.
%\\
%\item Figures of 2D Bent $L_p$ and $L_p$ in polar form, and its elongated $3D$ version
%\end{itemize}

%\begin{itemize}
%\item Figures of the level sets
%\\
%\item Non-convexity
%\\
%\item Non-symmetric -> not a metric
%\\
%\item Nonhomogeneous -> not a norm.
%\end{itemize}

%\begin{proposition} \label{prop:bend_cuboid}
  %The length of centerline is preserved while bending with curvature $\kappa$.
%\end{proposition}
%\begin{proof}
  %In other words, the length of center line of $B_b$ is equal to $2\sigma_y$. Write proof
%\end{proof}

 %%%%%%%%%%%%%%%%%%%%%%%%%%%%%%%%%%%%%%%%%%%%%%%%%%%%%%%%%%%%%%%%%%%%%%%%%%%%%%%%
%
% Collision Avoidance Condition
%
%%%%%%%%%%%%%%%%%%%%%%%%%%%%%%%%%%%%%%%%%%%%%%%%%%%%%%%%%%%%%%%%%%%%%%%%%%%%%%%%
\section{Collision Avoidance Constraints}
\label{sec:col_avoid}

In this section, the safety conditions for regular cuboid and bent cuboid in (\ref{eqn:problem_statement_inequality}) are analyzed, and transformed into a set of equality and inequality constraints using the weighted $L_p$ norm, and the weighted polar $L_p$ function proposed in the previous section.

\subsection{Two stage optimization}

The geometry explored in the previous sections will serve to convert the
safety constraint expressed as a set inclusion in
(\ref{eqn:problem_statement_inequality}) into several set exclusion inequality
constraints to be met simultaneously.  In doing so the equivalent
formulation of the constraints will involve the solution to an optimization
problem.  Suppose that the surface of robot is modeled as in
(\ref{eqn:regular_cuboid}) for a regular cuboid, and
(\ref{eqn:kappa_level_3D}) for bent cuboid with some even number $\rP$. In
addition, the surface of $j$-th rectangular obstacle is modeled with
$1$-level set of $\halfL^j$-weighted $L_p$ norm with some even number $\oP$.
Given that analytic surface model of the robot, the safey constraint between
a robot and a single obstacle is
\begin{align} \label{eqn:two_stage_inequality}
  &\left| \left| [\sPose^j]^{-1}\circ \applyG{\rPose_t}{\overline{v}^j}\right| \right|_{\halfL^j,p_o}>1 \\ &\text{such that } 
  \ \
  \label{eqn:two_stage_opt}
  \overline{v}^j:=\arg \min_{v^j\in\mathbb{R}^3} \left| \left| [\sPose^j]^{-1}\circ \applyG{\rPose_t}{v^j}\right| \right|_{\halfL^j,\oP} 
  \\ &\text{subject to} 
  \ \
  \label{eqn:two_stage_equality_regular}
  \left| \left| {v^j}\right| \right|_{\halfL,\rP}=1, \\ &\text{or subject to }
  \ \
  \label{eqn:two_stage_equality_bent}
  \Psi_{(\halfL,\roboCurvature,\rP)}(v^j)=|\roboCurvature|, 
\end{align}
following the notation of \S\ref{sec:prob}. The closest point
$\overline{v}^j$ in (\ref{eqn:two_stage_opt}) is on the surface of the robot
given by (\ref{eqn:two_stage_equality_regular}) for the regular cuboid or
(\ref{eqn:two_stage_equality_bent}) for the bent cuboid in the robot frame. 
This closest point minimizes the $\halfL^j$-weighted $L_p$ obstacle distance
in the obstacle frame. Therefore, the safety constraint is equivalent to
(\ref{eqn:two_stage_inequality}), since the surface of the obstacle is given
by the $1$-level set of $\halfL^j$-weighted $L_p$ norm. This formulation is
only valid when the closest point, $\overline{v}^j$ is available. The
subsequent sections identify necessary and sufficient conditions for the
closest point $\overline{v}^j$ optimization problem.

%\begin{itemize}
%\item Since now we have an analytic form for the surface of both rigid and bendable cuboid robot, we can reformulate the safety constraint....
%\\
%\item Define two stage problem for both rigid and bendable robot.
%\end{itemize}

\subsection{Collision Avoidance Constraint}

%First, the necessary first order condition for $v^j$ being stationary solution to (\ref{eqn:two_stage_opt}) is derived. However, the $v^j$ satisfying the necessary condition could be the possible minimizer, or, at worst, the maximizer. Therefore, an additional inequality constraints are suggested to eliminate the possibility of being the maximizer.  

\begin{figure}[t!]
  \captionsetup[subfigure]{}
  \centering
  \hfill
  \subfloat[Regular cuboid.]
    {\includegraphics[width=1.15in, clip=true,trim=3.5in 1.2in 6.5in 2.2in]{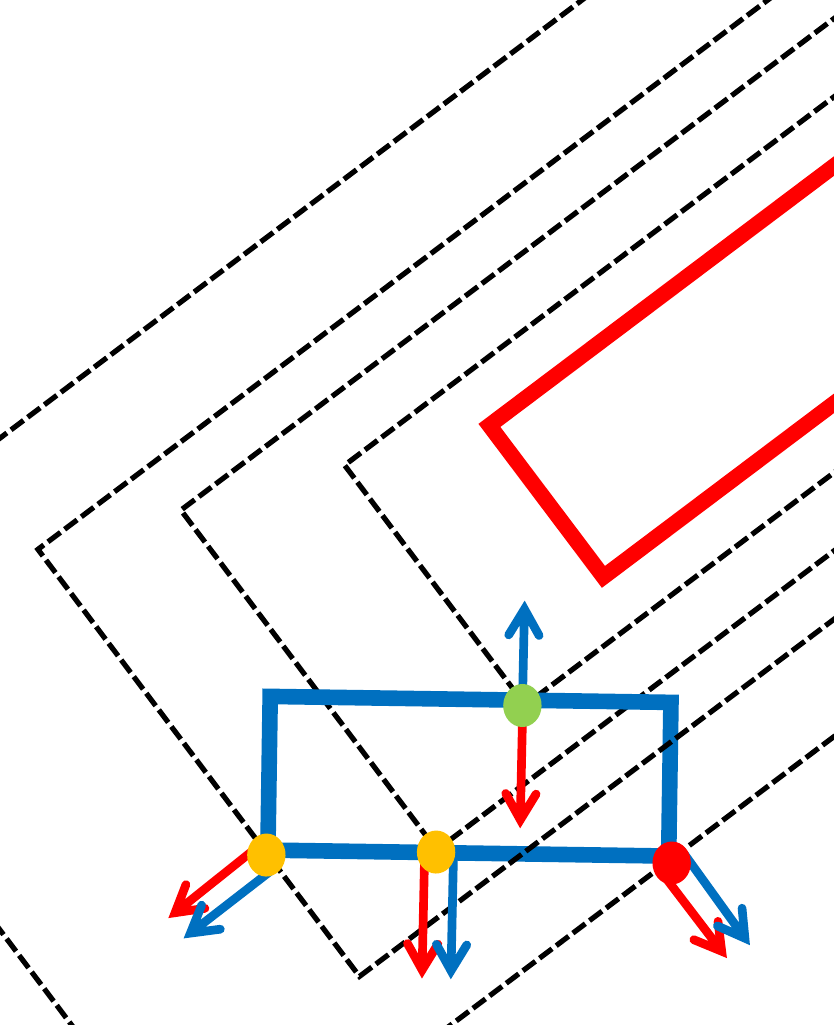}\label{fig:sufficient_regular}}
  %\hfill
  \subfloat[Case 1 : Bent cuboid]
    {\includegraphics[width=1.15in, clip=true,trim=3.5in 1.2in 6.5in 2.2in]{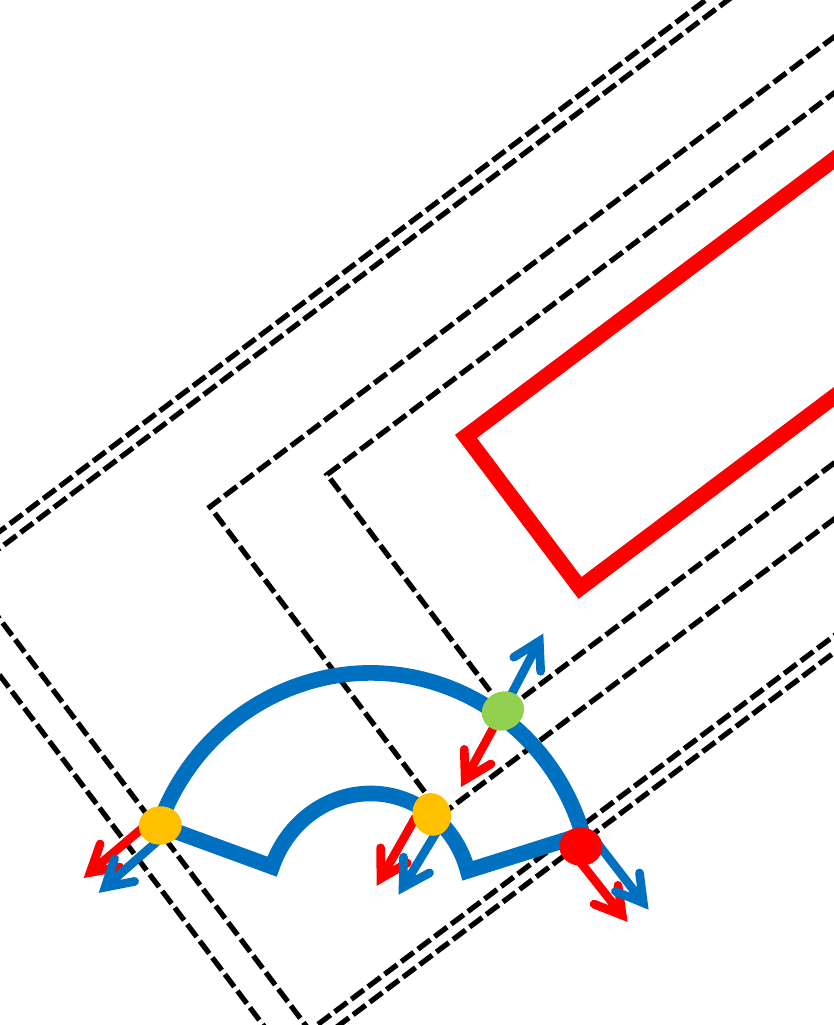}\label{fig:sufficient_bent_easy}}
  %\vfill
  \subfloat[Case 2 : Bent cuboid]
    {\includegraphics[width=1.15in, clip=true,trim=3.5in 1.2in 6.5in 2.2in]{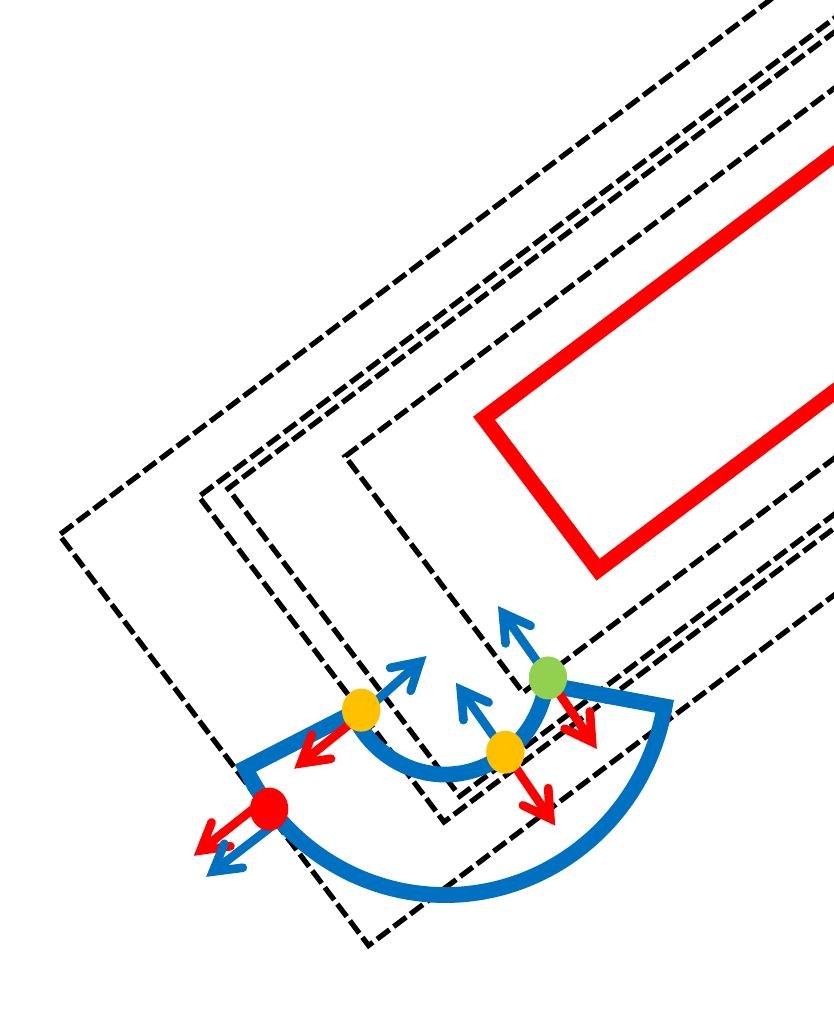}\label{fig:sufficient_bent_hard}}
  \hfill
	\caption{Geometric interpretation of stationary points.}
\label{fig:sufficient}
\end{figure}

\subsubsection{Regular Cuboid Robots}

First, we identify the necessary condition for $\vPose:=\vCoordsSEThree$ to
be a stationary solution to (\ref{eqn:two_stage_opt}) subject to
(\ref{eqn:two_stage_equality_regular}). Let
$w = (w_x,w_y,w_z):=[\sPose^j]^{-1}\circ \applyG{\rPose_t}{\vPose}$  be 
transformation of $\vPose$ to the $j$-the obstacle frame. The gradient of
the cost function in (\ref{eqn:two_stage_opt}) with respect to
$w$, and the gradient of the equality constraint in
(\ref{eqn:two_stage_equality_regular}) with respect to $\vPose$ are 
\begin{eqnarray} \label{eqn:W_regular}
  \mathcal{W}_{\vPose}
    := (w_x^{p-1}/(\halfL^j_1)^p,w_y^{p-1}/(\halfL^j_2)^p,
                                 w_z^{p-1}/(\halfL^j_3)^p)^T
  \\
  \label{eqn:V_regular}
  \mathcal{V}_{\vPose}
    := (\vCoordX^{p-1}/(\halfL_1)^p,\vCoordY^{p-1}/(\halfL_2)^p,
                                    \vCoordZ^{p-1}/(\halfL_3)^p)^T.
\end{eqnarray} 
\begin{theorem} \label{thm:necessary_regular}
The first order necessary condition for $\overline{\vPose}$ to be minimizer of (\ref{eqn:two_stage_opt}) is
\begin{equation} \label{eqn:necessary_regular}
  P_{\overline{\vPose}}([{\rPose_t}]^{-1}\circ \sPose^j({\mathcal{W}_{\overline{\vPose}}})) = 0,
\end{equation}
where 
\begin{equation} \label{eqn:P_regular}
  P_{\overline{\vPose}} 
    = I_{3\times 3} - \mathcal{V}_{\overline{\vPose}}\overline{\vPose}^T,
\end{equation}
with $I_{3\times 3}$ an identity matrix.
\end{theorem}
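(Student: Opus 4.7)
The plan is to treat (\ref{eqn:two_stage_opt})--(\ref{eqn:two_stage_equality_regular}) as a single--equality--constraint smooth optimization and apply the standard Lagrange multiplier rule, then algebraically eliminate the multiplier via the projection matrix $P_{\overline{\vPose}}$. Since $p$ is even, both $\frac{1}{p}\|\cdot\|^{p}_{(\halfL^j,\oP)}$ and $\frac{1}{p}\|\cdot\|^{p}_{(\halfL,\rP)}$ are smooth. Minimizing the norm is equivalent (up to a positive, monotone rescaling) to minimizing its $p$th power, so the first--order condition can be written in terms of the unweighted gradients $\mathcal{W}_{\vPose}$ and $\mathcal{V}_{\vPose}$ defined in (\ref{eqn:W_regular})--(\ref{eqn:V_regular}). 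Before invoking the multiplier rule I would note that the constraint surface $\{v : \|v\|_{(\halfL,\rP)}=1\}$ has everywhere nonzero constraint gradient (since $\mathcal{V}_v$ vanishes only at $v=0$, which is not on the surface), so LICQ holds and the Lagrange condition is necessary.

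Next I would compute the two gradients with respect to $v$. The constraint gradient (up to the positive factor $p$) is just $\mathcal{V}_{\overline{\vPose}}$. For the cost, the map $v \mapsto w = [\sPose^j]^{-1}\circ \applyG{\rPose_t}{v}$ is affine, $w = A v + b$, with linear part $A = (R_r^j)^T R(t)$ obtained by composing the rotations of $\rPose_t$ and $[\sPose^j]^{-1}$; the translation part is annihilated by the chain rule. Hence $\nabla_v\bigl(\tfrac{1}{p}\|w\|^{p}_{(\halfL^j,\oP)}\bigr) = A^T \mathcal{W}_{\overline{\vPose}}$, and since $A^T$ is the linear part of $[\rPose_t]^{-1}\circ \sPose^j$ (acting on a vector, i.e., translation--free), this quantity is exactly $[\rPose_t]^{-1}\circ \sPose^j(\mathcal{W}_{\overline{\vPose}})$ under the convention that the outer composition acts rotationally on the gradient vector. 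The Lagrange condition then reads
\begin{equation*}
  [\rPose_t]^{-1}\circ \sPose^j(\mathcal{W}_{\overline{\vPose}})
    \;=\; \lambda\, \mathcal{V}_{\overline{\vPose}},
\end{equation*}
for some scalar $\lambda \in \mathbb{R}$.

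To eliminate $\lambda$, I would exploit the algebraic identity
\begin{equation*}
  \overline{\vPose}^T \mathcal{V}_{\overline{\vPose}}
     \;=\; \sum_{i=1}^3 \frac{\overline{v}_i^{\,p}}{\halfL_i^p}
     \;=\; \|\overline{\vPose}\|^{p}_{(\halfL,\rP)}
     \;=\; 1,
\end{equation*}
valid on the constraint surface. Consequently $P_{\overline{\vPose}}\mathcal{V}_{\overline{\vPose}} = \mathcal{V}_{\overline{\vPose}} - \mathcal{V}_{\overline{\vPose}}(\overline{\vPose}^T\mathcal{V}_{\overline{\vPose}}) = 0$, so left--multiplying the Lagrange equation by $P_{\overline{\vPose}}$ yields (\ref{eqn:necessary_regular}).

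I expect the main obstacle to be mostly bookkeeping rather than mathematical depth: the Jacobian computation through the composition of two $SE(3)$ actions must be written carefully to see that only the rotational parts survive the differentiation and reassemble into the single $SE(3)$ composition $[\rPose_t]^{-1}\circ \sPose^j$, so that the compact coordinate--free expression on the right of (\ref{eqn:necessary_regular}) actually coincides with the explicit $A^T\mathcal{W}_{\overline{\vPose}}$. A secondary subtlety is checking that passing from $\|\cdot\|$ to $\|\cdot\|^p$ is legitimate here, which holds because $p$ is even and positive so the $p$th--power map is a strictly increasing $C^1$ function of the norm on the feasible set.
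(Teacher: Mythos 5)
Your proposal is correct and follows essentially the same route as the paper's Appendix~\ref{app:necessary_regular}: form the Lagrange (Hamiltonian) stationarity condition, use the identity $\overline{\vPose}^T\mathcal{V}_{\overline{\vPose}}=1$ from the constraint to eliminate $\lambda$, and recognize the result as $P_{\overline{\vPose}}([{\rPose_t}]^{-1}\circ \sPose^j({\mathcal{W}_{\overline{\vPose}}}))=0$. Your minor refinements --- working with the $p$th powers of the norms to drop the $\|\cdot\|^{1-p}$ prefactors, checking LICQ, and eliminating $\lambda$ by left-multiplying with $P_{\overline{\vPose}}$ rather than solving for it and substituting --- are cosmetic and do not change the argument.
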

\begin{proof}
See APPENDIX~\ref{app:necessary_regular}
\end{proof}
The structure of $P_{\overline{\vPose}}$ provides a geometric interpretation
for the necessary condition. 
\begin{proposition} \label{prop:projection_regular}
  $P_{\overline{\vPose}}$ is a projection matrix.
\end{proposition}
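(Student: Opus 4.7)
The plan is to show $P_{\overline{\vPose}}$ is idempotent, since idempotency is the defining property of a (not necessarily orthogonal) projection matrix. Because $P_{\overline{\vPose}}$ has the rank-one perturbation form $I - \mathcal{V}_{\overline{\vPose}} \overline{\vPose}^T$, squaring it produces
\begin{equation*}
  P_{\overline{\vPose}}^2 = I - 2\,\mathcal{V}_{\overline{\vPose}}\overline{\vPose}^T
  + \mathcal{V}_{\overline{\vPose}}\,(\overline{\vPose}^T \mathcal{V}_{\overline{\vPose}})\,\overline{\vPose}^T,
\end{equation*}
so the whole argument reduces to evaluating the scalar $\overline{\vPose}^T \mathcal{V}_{\overline{\vPose}}$ and showing it equals $1$.

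First I would compute this scalar directly from the definition in (\ref{eqn:V_regular}):
\begin{equation*}
\overline{\vPose}^T \mathcal{V}_{\overline{\vPose}}
   = \frac{\vCoordX^{p}}{\halfL_1^p} + \frac{\vCoordY^{p}}{\halfL_2^p} + \frac{\vCoordZ^{p}}{\halfL_3^p}.
\end{equation*}
Next I would use the hypothesis that $p$ is even (stated for the regular cuboid model in \S\ref{sec:Surface_Model}), so that $\vCoordX^p = |\vCoordX|^p$ coordinate-wise, which lets me identify the sum with $\|\overline{\vPose}\|_{(\halfL,p)}^{\,p}$. Finally, since $\overline{\vPose}$ is a feasible point of the inner optimization (\ref{eqn:two_stage_opt}), it satisfies the equality constraint (\ref{eqn:two_stage_equality_regular}), namely $\|\overline{\vPose}\|_{(\halfL,p)} = 1$, and therefore $\overline{\vPose}^T \mathcal{V}_{\overline{\vPose}} = 1$.

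Plugging this back gives $P_{\overline{\vPose}}^2 = I - 2\,\mathcal{V}_{\overline{\vPose}}\overline{\vPose}^T + \mathcal{V}_{\overline{\vPose}}\overline{\vPose}^T = P_{\overline{\vPose}}$, so $P_{\overline{\vPose}}$ is a projection. I do not expect a real obstacle here; the only subtle point is that $P_{\overline{\vPose}}$ is in general an \emph{oblique} projection rather than an orthogonal one, since $\mathcal{V}_{\overline{\vPose}}$ and $\overline{\vPose}$ are not parallel unless $\halfL_1 = \halfL_2 = \halfL_3$, so I would resist the temptation to claim symmetry. The geometric reading one can then attach is that $P_{\overline{\vPose}}$ projects onto the tangent plane of the level surface $\{\vPose : \|\vPose\|_{(\halfL,p)}=1\}$ at $\overline{\vPose}$, along the direction $\mathcal{V}_{\overline{\vPose}}$ (which is, up to scaling, the outward gradient of the $p$-norm), thereby supplying the geometric content that motivates condition (\ref{eqn:necessary_regular}) in Theorem~\ref{thm:necessary_regular}.
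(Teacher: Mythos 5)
Your proof is correct and follows essentially the same route as the paper's: expand $P_{\overline{\vPose}}^2$ and use the equality constraint (\ref{eqn:two_stage_equality_regular}) to conclude $\overline{\vPose}^T\mathcal{V}_{\overline{\vPose}}=1$, hence idempotency. The extra details you supply (the explicit evaluation of the scalar, the role of $p$ even, and the remark that the projection is oblique) are all consistent with, and slightly more explicit than, the paper's argument.
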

\begin{proof}
It suffices to show that $P_{\overline{\vPose}}$ is idempotent since $P$ is
a linear transformation. By computing $P_{\overline{\vPose}}^2$, the
following holds,
\begin{multline} \nonumber
  P_{\overline{\vPose}}^2
    = (I_{3\times 3}-\mathcal{V}_{\overline{\vPose}}\overline{\vPose}^T)^2 \\
    = I_{3\times 3}-2\mathcal{V}_{\overline{\vPose}}\overline{\vPose}^T
      + \mathcal{V}_{\overline{\vPose}}\overline{\vPose}^T
        \mathcal{V}_{\overline{\vPose}}\overline{\vPose}^T 
%&=& I_{3\times 3}-2\mathcal{V}_{\overline{\vPose}}\overline{\vPose}^T+\mathcal{V}_{\overline{\vPose}}\overline{\vPose}^T
%\\
    = P_{\overline{\vPose}},
\end{multline}
since $\overline{\vPose}^T\mathcal{V}_{\overline{\vPose}}=1$ by (\ref{eqn:two_stage_equality_regular}).
\end{proof}
The next corollary investigatess the range space of $P_{\overline{\vPose}}$.
\begin{corollary}
\label{cor:range_P}
The dimension of the range space of $P_{\overline{\vPose}}$, is $2$.
\end{corollary}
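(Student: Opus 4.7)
The plan is to exploit the idempotence of $P_{\overline{\vPose}}$ (established in Proposition~2) to reduce the dimension calculation to a trace computation, since for any projection matrix the rank equals the trace and the rank of a matrix is the dimension of its range space.

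First I would note that since $P_{\overline{\vPose}}$ is a projection, $\dim(\text{range}(P_{\overline{\vPose}})) = \text{rank}(P_{\overline{\vPose}}) = \text{tr}(P_{\overline{\vPose}})$. Then, using linearity of trace and the cyclic property,
\begin{equation*}
\text{tr}(P_{\overline{\vPose}}) = \text{tr}(I_{3\times 3}) - \text{tr}(\mathcal{V}_{\overline{\vPose}}\overline{\vPose}^T) = 3 - \overline{\vPose}^T\mathcal{V}_{\overline{\vPose}}.
\end{equation*}

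Next I would verify $\overline{\vPose}^T\mathcal{V}_{\overline{\vPose}} = 1$. From the definition of $\mathcal{V}_{\overline{\vPose}}$ in (\ref{eqn:V_regular}),
\begin{equation*}
\overline{\vPose}^T\mathcal{V}_{\overline{\vPose}} = \sum_{i=1}^{3}\frac{\overline{v}_i \cdot \overline{v}_i^{\,p-1}}{\halfL_i^{p}} = \sum_{i=1}^{3}\frac{\overline{v}_i^{\,p}}{\halfL_i^{p}}.
\end{equation*}
Because $p$ is even, $\overline{v}_i^{\,p} = |\overline{v}_i|^{p}$, so this sum equals $\|\overline{\vPose}\|_{(\halfL,p)}^{p}$, which is $1$ by the equality constraint (\ref{eqn:two_stage_equality_regular}). (This same identity was already invoked inside the proof of Proposition~2, so the reader has been primed for it.) Combining yields $\text{tr}(P_{\overline{\vPose}}) = 3 - 1 = 2$, and the claim follows.

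I do not anticipate any serious obstacle: the argument is two lines once the trace-equals-rank fact for projections is used. The only point requiring a touch of care is noting that the $L_p$-norm constraint evaluated to the $p$-th power gives exactly the pairing $\overline{\vPose}^T\mathcal{V}_{\overline{\vPose}}$ when $p$ is even, so no absolute-value or sign complications intervene. An alternative but slightly longer route would be to exhibit $\mathcal{V}_{\overline{\vPose}}$ as a nonzero null vector of $P_{\overline{\vPose}}$ (since $P_{\overline{\vPose}}\mathcal{V}_{\overline{\vPose}} = \mathcal{V}_{\overline{\vPose}} - \mathcal{V}_{\overline{\vPose}}(\overline{\vPose}^T\mathcal{V}_{\overline{\vPose}}) = 0$) and then argue that the null space is one-dimensional because it is the span of $\overline{\vPose}$'s complementary direction; the trace argument is cleaner and is what I would use.
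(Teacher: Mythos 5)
Your proof is correct and takes essentially the same route as the paper's: both reduce the rank computation to a trace computation via idempotence and evaluate $\mathrm{tr}(P_{\overline{\vPose}})=3-\overline{\vPose}^T\mathcal{V}_{\overline{\vPose}}=2$ (the paper phrases this as counting eigenvalues equal to $1$). Your explicit check that $\overline{\vPose}^T\mathcal{V}_{\overline{\vPose}}=1$ from the even-$p$ equality constraint is a detail the paper delegates to the preceding proposition, but it is the same identity.
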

\begin{proof}
Since $P_{\overline{\vPose}}$ is idempotent, each eigenvalue of $P_{\overline{\vPose}}$ is either $0$ or $1$ \cite{horn2012matrix}. The trace of $P_{\overline{\vPose}}$ is given by,
\begin{equation*}
Tr(P_{\overline{\vPose}})=3-\overline{\vPose}^T\mathcal{V}_{\overline{\vPose}}=2.
\end{equation*}
Hence, there exist two eigenvalues of $1$. 
\end{proof}
The next theorem follows immediately.
\begin{theorem}{(Geometric interpretation)}
\label{thm:geometry_regular}
The necessary condition in (\ref{eqn:necessary_regular}) holds for $\vPose$ if and only if the vector $([{\rPose_t}]^{-1}\circ \sPose^j({\mathcal{W}}))$ is orthogonal to the two dimensional tangent space of the boundary of the robot, $\partial B_{((\halfL_1,\halfL_2,\halfL_3),p)}$, at $\vPose$.
\end{theorem}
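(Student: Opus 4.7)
The plan is to identify $\ker P_{\overline{\vPose}}$ as the one-dimensional normal line to $\partial B_{((\halfL_1,\halfL_2,\halfL_3),p)}$ at $\overline{\vPose}$; condition (\ref{eqn:necessary_regular}) then becomes the statement that the transported obstacle gradient $[{\rPose_t}]^{-1}\circ \sPose^j({\mathcal{W}_{\overline{\vPose}}})$ lies on this line, which is equivalent to its being orthogonal to the two-dimensional tangent plane.

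First, I would identify the normal direction. Since $p$ is even, the constraint $||\vPose||_{(\halfL,p)}=1$ can be written as $f(\vPose)=1$ for the smooth function $f(\vPose):=(\vCoordX/\halfL_1)^{p}+(\vCoordY/\halfL_2)^{p}+(\vCoordZ/\halfL_3)^{p}$. A direct computation gives $\nabla f(\overline{\vPose})=p\,\mathcal{V}_{\overline{\vPose}}$, which is nonzero because $\overline{\vPose}$ lies on the unit sphere of the norm and hence has at least one nonzero coordinate. Thus $\mathcal{V}_{\overline{\vPose}}$ spans the normal line at $\overline{\vPose}$, and the tangent plane to $\partial B_{((\halfL_1,\halfL_2,\halfL_3),p)}$ at $\overline{\vPose}$ is exactly $\{\mathcal{V}_{\overline{\vPose}}\}^{\perp}$.

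Next, I compute $\ker P_{\overline{\vPose}}$. From $P_{\overline{\vPose}}u = u-\mathcal{V}_{\overline{\vPose}}(\overline{\vPose}^T u)$, any $u\in\ker P_{\overline{\vPose}}$ must be a scalar multiple of $\mathcal{V}_{\overline{\vPose}}$; conversely, for $u=c\,\mathcal{V}_{\overline{\vPose}}$ the Euler-type identity $\overline{\vPose}^T\mathcal{V}_{\overline{\vPose}}=||\overline{\vPose}||_{(\halfL,p)}^{p}=1$ gives $P_{\overline{\vPose}}u=0$. By Proposition~\ref{prop:projection_regular} and Corollary~\ref{cor:range_P}, $\ker P_{\overline{\vPose}}$ is one-dimensional, so the two computations together yield $\ker P_{\overline{\vPose}}=\mathrm{span}(\mathcal{V}_{\overline{\vPose}})$.

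Combining the two pieces, (\ref{eqn:necessary_regular}) is equivalent to $[{\rPose_t}]^{-1}\circ \sPose^j({\mathcal{W}_{\overline{\vPose}}})\in\mathrm{span}(\mathcal{V}_{\overline{\vPose}})$, which is the same as being orthogonal to $\{\mathcal{V}_{\overline{\vPose}}\}^{\perp}$, the tangent plane at $\overline{\vPose}$. The step I expect to require the most care is keeping the nature of $P_{\overline{\vPose}}$ straight: it is an \emph{oblique} projection onto $\{\overline{\vPose}\}^{\perp}$ along $\mathrm{span}(\mathcal{V}_{\overline{\vPose}})$, not the orthogonal projection onto the tangent plane, so its geometric content is not visible from the matrix decomposition alone and must be extracted by computing $\ker P_{\overline{\vPose}}$ as above. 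Once that is done, the equivalence with surface orthogonality is immediate.
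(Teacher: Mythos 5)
Your proposal is correct and follows essentially the same route as the paper: both arguments rest on the facts that $\mathcal{V}_{\overline{\vPose}}$ is the (nonzero) gradient of the constraint and hence normal to $\partial B_{((\halfL_1,\halfL_2,\halfL_3),p)}$, that $\overline{\vPose}^T\mathcal{V}_{\overline{\vPose}}=1$ places $\mathcal{V}_{\overline{\vPose}}$ in $\ker P_{\overline{\vPose}}$, and that the rank count from Corollary~\ref{cor:range_P} pins down this kernel as exactly the normal line, so that $P_{\overline{\vPose}}(\cdot)=0$ means alignment with $\mathcal{V}_{\overline{\vPose}}$. Your explicit identification of $\ker P_{\overline{\vPose}}=\mathrm{span}(\mathcal{V}_{\overline{\vPose}})$ and your remark that $P_{\overline{\vPose}}$ is an oblique (not orthogonal) projection are just a slightly more careful phrasing of the paper's row-space argument, not a different method.
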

\begin{proof}
Since $\mathcal{V}_{\overline{\vPose}}$ is a gradient vector, it is orthogonal to the tangent space of $\partial B_{((\halfL_1,\halfL_2,\halfL_3),p)}$, at $\vPose$. In addition, observe that $\mathcal{V}_{\overline{\vPose}}$ is in the nullspace of $P_{\overline{\vPose}}$. Therefore, the row space of $P_{\overline{\vPose}}$ represents the two dimensional tangent space by Corollary~\ref{cor:range_P}. Hence, the necessary condition in (\ref{eqn:necessary_regular}) holds if and only if the vector $([{\rPose_t}]^{-1}\circ \sPose^j({\mathcal{W}}_{\overline{\vPose}}))$ is aligned with $\mathcal{V}_{\overline{\vPose}}$.
\end{proof}

%\begin{figure}[t!]
  %\centering
  %{\includegraphics[width=2.6in, clip=true,trim=2.5in 0.5in 5.0in 2.2in]{BentLp_sufficient.pdf}}
  %\caption{Geometric interpretation of necessary and sufficient condition for the closest point.}	\label{fig:sufficient_regular}
%\end{figure}

Instead of examining the Hessian of the Hamiltonian in (\ref{eqn:Hamiltonian}), the following proposition provides a sufficient condition for global minima of (\ref{eqn:two_stage_opt}).
\begin{proposition}
\label{prop:sufficient_regular}
A sufficient condition for $\overline{\vPose}$ to be a global minimum is 
\begin{equation} \label{eqn:sufficient_regular}
  ([{\rPose_t}]^{-1}\circ \sPose^j({\mathcal{W}}_{\overline{\vPose}}))^T\overline{\vPose} < 0.
\end{equation}
\end{proposition}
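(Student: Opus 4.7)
The plan is to combine convexity of the objective with the supporting-hyperplane property of the feasible set. Since the weighted $L_{\oP}$ norm is convex for $\oP\ge 1$ and the map $v\mapsto w=[\sPose^j]^{-1}\circ\rPose_t(v)$ is affine, the function $\phi(v):=\|[\sPose^j]^{-1}\circ\rPose_t(v)\|_{\halfL^j,\oP}^{\oP}$ is convex in $v$. By the chain rule, $\nabla_v\phi(\overline{\vPose})=\oP\,[\rPose_t]^{-1}\circ\sPose^j(\mathcal{W}_{\overline{\vPose}})$, where the rigid-body composition acts on the vector $\mathcal{W}$ only through its rotational part, which is the same convention needed to make sense of (\ref{eqn:necessary_regular}). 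Analogously $\nabla_v(\|v\|_{\halfL,\rP}^{\rP})|_{\overline{\vPose}}=\rP\,\mathcal{V}_{\overline{\vPose}}$, and Euler's identity for this $\rP$-homogeneous function yields $\overline{\vPose}^T\mathcal{V}_{\overline{\vPose}}=\|\overline{\vPose}\|_{\halfL,\rP}^{\rP}=1$.

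By Theorem~\ref{thm:necessary_regular}, at any stationary $\overline{\vPose}$ there is a Lagrange multiplier $\mu\in\mathbb{R}$ with $[\rPose_t]^{-1}\circ\sPose^j(\mathcal{W}_{\overline{\vPose}})=\mu\,\mathcal{V}_{\overline{\vPose}}$; taking the inner product with $\overline{\vPose}$ and applying the Euler identity above gives $([\rPose_t]^{-1}\circ\sPose^j(\mathcal{W}_{\overline{\vPose}}))^T\overline{\vPose}=\mu$. Hence the hypothesis (\ref{eqn:sufficient_regular}) is exactly the condition $\mu<0$, and the proposition reduces to showing that a stationary point with negative Lagrange multiplier is a global minimizer.

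For the final step I would invoke convexity of $\phi$ at $\overline{\vPose}$ to obtain, for every feasible $v$,
\begin{equation*}
\phi(v)-\phi(\overline{\vPose})\ge\nabla_v\phi(\overline{\vPose})^T(v-\overline{\vPose})=\oP\,\mu\,\bigl(\mathcal{V}_{\overline{\vPose}}^T v-1\bigr).
\end{equation*}
To close the argument, observe that the surface $\|v\|_{\halfL,\rP}=1$ bounds the convex sublevel set of the convex $\rP$-homogeneous function $\|\cdot\|_{\halfL,\rP}^{\rP}$, and that $\mathcal{V}_{\overline{\vPose}}$ is a positive multiple of its outward normal at $\overline{\vPose}$. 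The supporting-hyperplane inequality applied to this convex set then yields $\mathcal{V}_{\overline{\vPose}}^T v\le\mathcal{V}_{\overline{\vPose}}^T\overline{\vPose}=1$ for every $v$ in the feasible set. Under $\mu<0$, the right-hand side above is a product of two nonpositive numbers and is therefore nonnegative, so $\phi(v)\ge\phi(\overline{\vPose})$ for all feasible $v$, establishing global optimality.

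The main obstacle I anticipate is the notational reconciliation in the first paragraph: confirming that the composition $[\rPose_t]^{-1}\circ\sPose^j(\cdot)$ appearing in (\ref{eqn:sufficient_regular}) must act on $\mathcal{W}_{\overline{\vPose}}$ by its rotational part alone, which is the interpretation needed for $P_{\overline{\vPose}}([\rPose_t]^{-1}\circ\sPose^j(\mathcal{W}))=0$ to coincide with the classical Lagrange stationarity equation $\nabla_v\phi\propto\nabla_v\psi$. Once that identification is in hand, the remainder is routine convex-analysis bookkeeping combining convexity of the cost, convexity of the feasible ball, Euler's identity, and the supporting-hyperplane property.
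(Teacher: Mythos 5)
Your proof is correct, but it reaches the conclusion by a genuinely different and more rigorous route than the paper. The paper's own proof is geometric and somewhat informal: it notes that the cuboid is convex and the obstacle norm is convex, argues that if the two aligned normals pointed in the \emph{same} direction there would be an interior point (and hence, by continuity of the level curves, a boundary point) of smaller cost, concludes the sign must be negative at the global minimum, and then asserts that the inequality (\ref{eqn:sufficient_regular}) ``only holds at the minimum'' --- i.e., it really establishes necessity of the sign condition and infers sufficiency by exclusion among stationary points, with Figure~\ref{fig:sufficient_regular} serving as supporting evidence. You instead give a direct certificate of global optimality: identifying the inner product in (\ref{eqn:sufficient_regular}) with the Lagrange multiplier $\mu$ via Theorem~\ref{thm:necessary_regular} and the Euler identity $\overline{\vPose}^T\mathcal{V}_{\overline{\vPose}}=1$, then combining the first-order convexity inequality for the (affinely precomposed) $\oP$-th power of the obstacle norm with the supporting-hyperplane inequality $\mathcal{V}_{\overline{\vPose}}^T v\le 1$ on the unit ball of $\|\cdot\|_{\halfL,\rP}$. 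This yields $\phi(v)\ge\phi(\overline{\vPose})$ for every feasible $v$ without needing the claim that every non-minimal stationary point has positive sign, so your argument is tighter where the paper is heuristic. Two small caveats: like the paper, you implicitly assume $\overline{\vPose}$ already satisfies the stationarity condition (\ref{eqn:necessary_regular}) --- the sign condition alone is not sufficient without it, so that hypothesis should be stated; and your reading of $[{\rPose_t}]^{-1}\circ\sPose^j(\mathcal{W}_{\overline{\vPose}})$ as the rotational part acting on the covector $\mathcal{W}$ is indeed the one the paper uses in (\ref{eqn:Hamiltonian_grad}), so that reconciliation is sound.
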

\begin{proof}
First, observe that the regular cuboid robot, $B_{((\halfL_1,\halfL_2,\halfL_3),p)}$, is convex, and the $(\halfL^j,p)$-weighted $L_p$ norm is a convex function. The necessary condition in (\ref{eqn:necessary_regular}) shows that $\mathcal{V}_{\overline{\vPose}}$ and $([{\rPose_t}]^{-1}\circ \sPose^j({\mathcal{W}}_{\overline{\vPose}}))$ are aligned. If these two vectors point in the same direction, in other words $([{\rPose_t}]^{-1}\circ \sPose^j({\mathcal{W}}_{\overline{\vPose}}))^T\mathcal{V}_{\overline{\vPose}}>0$, then there is a point interior to $B_{((\halfL_1,\halfL_2,\halfL_3),p)}$ for which the cost is smaller. Since the level curves of $(\halfL^j,p)$-weighted $L_p$ norm are continuous, there exist a point on the boundary $\partial B_{((\halfL_1,\halfL_2,\halfL_3),p)}$ with smaller cost. Therefore, $([{\rPose_t}]^{-1}\circ \sPose^j({\mathcal{W}}_{\overline{\vPose}}))^T\mathcal{V}_{\overline{\vPose}}<0$ should hold for a global minimum point. 

By using (\ref{eqn:Hamiltonian_grad0}) in APPENDIX \ref{app:necessary_regular}, the inner product between two normal vectors is explicitly computed as
\begin{equation*}
([{\rPose_t}]^{-1}\circ \sPose^j({\mathcal{W}}_{\overline{\vPose}}))^T\mathcal{V}_{\overline{\vPose}}=([{\rPose_t}]^{-1}\circ \sPose^j({\mathcal{W}}_{\overline{\vPose}}))^T\vPose||\mathcal{V}_{\overline{\vPose}}||_2.
\end{equation*}
Since $||\mathcal{V}_{\overline{\vPose}}||_2$ is not equal to zero for every $\vPose$ on the surface of the robot, (\ref{eqn:sufficient_regular}) only holds at the minimum.
\end{proof}

Interestingly, the inequality constraint in (\ref{eqn:sufficient_regular}) is equivalent to $\lambda>0$, where $\lambda$ is the Lagrange multiplier in the Hamiltonian (\ref{eqn:Hamiltonian}). This condition is similar to the Karush-Kuhn-Tucker (KKT) condition in the sense that the sign of the Lagrange multiplier must be non-negative.

A geometric explanation of Theorem~\ref{thm:geometry_regular} and
Proposition~\ref{prop:sufficient_regular} is demonstrated in
Figure~\ref{fig:sufficient_regular}. The example depicted is in planar space
in order to better explain the direction of the normal vectors of the
surface. The blue weighted $L_p$ rectangle represents the robot's boundary,
and the red weighted $L_p$ rectangle represents the obstacle's boundary. The
figure is drawn in the robot frame. The black dashed lines represent the
level sets of the obstacle's weighted $L_p$ norm, where the level value gets
smaller closer to the red rectangle. The blue arrows represent the outward
normal to the surface of the robot, and the red arrows represent the outward
normal to the level curves of the obstacle's weighted $L_p$ norm. In this
example, there exist four points, two orange circles, one green circle, and
one red circle on the robot's boundary. The green circle is the
desired closest point, and the red circle is the maximum distance in terms
of obstacle's weighted $L_p$ norm. All four circles are stationary solutions
satisfying the necessary condition in (\ref{eqn:necessary_regular}). In all
cases, the normal vectors are aligned. The blue arrow and the red arrow are 
pointing in the same direction for all three circles except for the green
circle, which is the desired closest point. This graphically verifies the
Proposition~\ref{prop:sufficient_regular}.

%Since the center of mass (origin in robot's frame) of the robot is in the interior of $B_{((\halfL_1,\halfL_2,\halfL_3),p)}$, the the minimum cost must be less than the cost at the origin of the robot, which can be expressed as,
%\begin{equation}
%%\label{eqn:sufficient_reugular}
%\left| \left| [\sPose^j]^{-1}\circ \applyG{\rPose_t}{\overline{v}}\right| \right|_{\halfL^j,p_o}<\left| \left| [\sPose^j]^{-1}\circ \applyG{\rPose_t}{\textbf{0}}\right| \right|_{\halfL^j,p_o},
%\end{equation}
%where $\textbf{0}$ is a zero vector. This will eliminate the possibility for the solution to (\ref{eqn:necessary_regular}) becoming a maximizer of the cost function. 

%Instead of pursuing for the Hessian of the Hamiltonian in (\ref{eqn:Hamiltonian}), the above geometrical reasoning for eliminating the possibility of maximum is considered in this paper. 
Therefore, the original optimization problem is replaced by introducing an auxiliary closest point $\overline{\vPose}$ which satisfies four equality conditions, (\ref{eqn:necessary_regular}) and (\ref{eqn:two_stage_equality_regular}), and one inequality condition (\ref{eqn:sufficient_regular}). By combining these constraints with inequality condition in (\ref{eqn:two_stage_inequality}), the safety condition for the regular cuboid can be given by four equality constraints and two inequality constraints. 
%\begin{itemize}
%\item Derive the necessary condition
%\\
%\item Convex problem (NO)
%\\
%\item Adding sufficient condition with inequality (Alternative to the Hessian)
%\end{itemize}

%Provide the $4$ equality constraint $2$ inequality constraint. $SE(3)$
\subsubsection{Bent Cuboid Robots}
Here, the necessary condition for $\vPose$ to be a stationary solution to (\ref{eqn:two_stage_opt}) subject to (\ref{eqn:two_stage_equality_bent}) is considered. Let $w = (w_x,w_y,w_z):=[\sPose^j]^{-1}\circ \applyG{\rPose_t}{\vPose}$  be the transformation of $\vPose$ to the $j$-th obstacle frame. The gradient of the cost function in (\ref{eqn:two_stage_opt}) with respect to $w$ is the same as in (\ref{eqn:W_regular}), and the gradient of the equality constraint in (\ref{eqn:two_stage_equality_bent}) with respect to $\vPose$ is
\begin{equation} \label{eqn:V_polar}
  \mathcal{U}_{\vPose} := \mathcal{V}_{\vPose}^{T}\overline{Q}^{-1},
\end{equation} 
where 
\begin{eqnarray} \label{eqn:VSE3}
  \mathcal{V}_{\vPose} &:=& 
    \begin{pmatrix}
      ({R_{\mathcal{T}_\roboCurvature}(\vPose)-1})^{p-1} / (\halfL_2)^p \\
      (\theta_{\mathcal{T}_\roboCurvature}(\vPose)-\theta_0)^{p-1}/(\halfL_1)^p
      \\
      (|\roboCurvature|\vCoordZ)^{p-1}/(\halfL_3)^p)^T
    \end{pmatrix} \\
  \label{eqn:QSE3}
  \overline{Q} &:=& 
    \begin{pmatrix}
      Q & \textbf{0} \\
      \textbf{0}^T & 1
    \end{pmatrix}
\end{eqnarray}
such that $\textbf{0}$ is a zero vector in $\mathbb{R}^2$, and 
\begin{equation} \label{eqn:Q}
  Q = \begin{pmatrix}
      \cos(\theta_{\mathcal{T}_\roboCurvature}(\vPose)) & 
      R_{\mathcal{T}_\roboCurvature}(\vPose)
        \sin(\theta_{\mathcal{T}_\roboCurvature}(\vPose)) \\
      \sin(\theta_{\mathcal{T}_\roboCurvature}(\vPose)) & 
      -R_{\mathcal{T}_\roboCurvature}(\vPose)
        \cos(\theta_{\mathcal{T}_\roboCurvature}(\vPose))
  \end{pmatrix}.
\end{equation}
Now, defined $\nu$ as follows,
\begin{equation*}
\nu:= (R_{\mathcal{T}_\roboCurvature}(\vPose)-1, \theta_{\mathcal{T}_\roboCurvature}(\vPose)-\theta_0,|\roboCurvature|\vCoordZ)^T.
\end{equation*}
Using the equality constraint in (\ref{eqn:two_stage_equality_bent}),  the following holds,
\begin{equation}
\label{eqn:equality_psi}
\nu^T\mathcal{V}_{\vPose}=|\roboCurvature|^p.
\end{equation}
\begin{theorem}
\label{thm:necessary_bent}
The first order necessary condition for $\overline{\vPose}$ to be minimizer of (\ref{eqn:two_stage_opt}) is
\begin{equation}
\label{eqn:necessary_bent}
P_{\overline{\vPose}}([{\rPose_t}]^{-1}\circ \sPose^j({\mathcal{W}}_{\overline{\vPose}}))=0,
\end{equation}
where 
\begin{equation}
\label{eqn:P_bent}
P_{\overline{\vPose}}=I_{3\times 3}-(1/|\roboCurvature|^p)\overline{Q}^{-T}\mathcal{V}_{\overline{\vPose}}\nu^T\overline{Q}^T,
\end{equation}
with $\overline{Q}^{-T}$ as an inverse transpose of $\overline{Q}$.
\end{theorem}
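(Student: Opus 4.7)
The plan is to mirror the proof of Theorem~\ref{thm:necessary_regular} (whose details appear in Appendix~\ref{app:necessary_regular}), but with careful bookkeeping for the polar coordinate transformation embedded in $\Psi$. Because the cost and the constraint are both positive, I would work with $p_o$-th and $p$-th powers to avoid the outer roots, and form the Hamiltonian
\begin{equation*}
  H(\vPose,\lambda) \;=\; \left\|[\sPose^j]^{-1}\!\circ\applyG{\rPose_t}{\vPose}\right\|_{\halfL^j,p_o}^{p_o}
    \;+\;\lambda\,\bigl(\Psi^{p}_{(\halfL,\roboCurvature,p)}(\vPose)-|\roboCurvature|^p\bigr).
\end{equation*}
First-order stationarity gives $\nabla_\vPose H=0$, and the goal is to rewrite this as the projection identity~(\ref{eqn:necessary_bent}).

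The gradient of the cost term is handled exactly as in the regular-cuboid case: since $w=[\sPose^j]^{-1}\!\circ\applyG{\rPose_t}{\vPose}$ is a rigid motion, its Jacobian is an orthogonal matrix, and applying the chain rule together with (\ref{eqn:W_regular}) yields $\nabla_\vPose \|w\|_{\halfL^j,p_o}^{p_o}=p_o\,[\rPose_t]^{-1}\!\circ\sPose^j(\mathcal{W}_{\overline{\vPose}})$ (recall that $M^{-1}=M^T$ pushes $\mathcal{W}$ back through the composed rotation). This is the same manipulation performed in Appendix~\ref{app:necessary_regular}, so I would cite it rather than repeat.

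The real work is the constraint gradient. Writing the auxiliary variable $\nu:=(R_{\mathcal{T}_\roboCurvature}(\vPose)-1,\,\theta_{\mathcal{T}_\roboCurvature}(\vPose)-\theta_0,\,|\roboCurvature|\vCoordZ)^T$, the function $\Psi^p$ is a fixed weighted sum of $p$-th powers of the entries of $\nu$, so its gradient with respect to $\nu$ is $p\,\mathcal{V}_{\overline{\vPose}}$ as defined in~(\ref{eqn:VSE3}). A chain-rule calculation then gives
\begin{equation*}
  \nabla_\vPose \Psi^{p}_{(\halfL,\roboCurvature,p)}(\vPose)
    \;=\; p\,\bigl(\partial\nu/\partial\vPose\bigr)^{T}\mathcal{V}_{\overline{\vPose}}
    \;=\; p\,\overline{Q}^{-T}\mathcal{V}_{\overline{\vPose}}
    \;=\; p\,\mathcal{U}_{\overline{\vPose}}^{T}.
\end{equation*}
Here I would differentiate $R_{\mathcal{T}_\roboCurvature}$ and $\theta_{\mathcal{T}_\roboCurvature}$ from (\ref{eqn:R_polar})–(\ref{eqn:theta_polar}), identify the resulting $2\times 2$ Jacobian with $Q^{-1}$ in~(\ref{eqn:Q}), and append the trivial $|\roboCurvature|$ factor for the $\vCoordZ$ block to obtain $\overline{Q}^{-1}$ as in~(\ref{eqn:QSE3}). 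This identification of $\overline{Q}$ with the Jacobian of the polar change of variables is the one conceptually new element relative to the rigid case and will be the main obstacle, since the signs, the placement of $R_{\mathcal{T}_\roboCurvature}$ in off-diagonal entries, and the dependence of $\theta_0$ on $\sign(\roboCurvature)$ all have to be tracked.

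With both gradients in hand, stationarity reads
\begin{equation*}
  p_o\,[\rPose_t]^{-1}\!\circ\sPose^j(\mathcal{W}_{\overline{\vPose}})
    \;=\; -\lambda p\,\overline{Q}^{-T}\mathcal{V}_{\overline{\vPose}},
\end{equation*}
so the cost gradient is forced to be a scalar multiple of $\overline{Q}^{-T}\mathcal{V}_{\overline{\vPose}}$. The concluding step is to show that $P_{\overline{\vPose}}$ from~(\ref{eqn:P_bent}) annihilates exactly this direction: a direct computation using~(\ref{eqn:equality_psi}) gives
\begin{equation*}
  P_{\overline{\vPose}}\,\overline{Q}^{-T}\mathcal{V}_{\overline{\vPose}}
    \;=\; \overline{Q}^{-T}\mathcal{V}_{\overline{\vPose}}
      - \tfrac{1}{|\roboCurvature|^p}\,\overline{Q}^{-T}\mathcal{V}_{\overline{\vPose}}\,\nu^{T}\overline{Q}^{T}\overline{Q}^{-T}\mathcal{V}_{\overline{\vPose}}
    \;=\; \overline{Q}^{-T}\mathcal{V}_{\overline{\vPose}}\bigl(1 - \nu^{T}\mathcal{V}_{\overline{\vPose}}/|\roboCurvature|^p\bigr) = 0.
\end{equation*}
Multiplying the stationarity equation on the left by $P_{\overline{\vPose}}$ therefore kills the right-hand side and produces exactly~(\ref{eqn:necessary_bent}), completing the proof.
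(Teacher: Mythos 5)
Your proposal is correct and follows essentially the same route as the paper's Appendix~C: form the Lagrangian with the constraint $\Psi=|\roboCurvature|$, identify the constraint gradient as $\overline{Q}^{-T}\mathcal{V}_{\overline{\vPose}}$ (the paper's $\mathcal{U}_{\overline{\vPose}}^T$), and eliminate $\lambda$ using $\nu^T\mathcal{V}_{\overline{\vPose}}=|\roboCurvature|^p$; the only differences are cosmetic --- you raise cost and constraint to their $p$-th powers and conclude by checking that $P_{\overline{\vPose}}$ annihilates the direction $\overline{Q}^{-T}\mathcal{V}_{\overline{\vPose}}$, whereas the paper keeps the roots, solves for $\lambda$ explicitly by right-multiplying by $\overline{Q}\nu$, and substitutes back. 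Note only that your claim $(\partial\nu/\partial\vPose)^T=\overline{Q}^{-T}$ holds at best up to a scalar factor (the paper's own gradient (\ref{eqn:Hamiltonian_grad_bent}) carries an extra $|\roboCurvature|$), but since any such scalar is absorbed into $\lambda$ and your argument uses only the direction, the conclusion is unaffected.
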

\begin{proof}
See APPENDIX~\ref{app:necessary_bent}
\end{proof}
Similar to the regular cuboid case, $P_{\overline{\vPose}}$ is also a projection matrix, which provides the geometric interpretation of the necessary condition.
\begin{proposition}
$P_{\overline{\vPose}}$ is a projection.
\end{proposition}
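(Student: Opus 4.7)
My plan mirrors the regular cuboid proof in Proposition~\ref{prop:projection_regular}: since $P_{\overline{\vPose}}$ is already linear, it suffices to verify idempotency, i.e.\ $P_{\overline{\vPose}}^2 = P_{\overline{\vPose}}$. The structural similarity with the rigid case is strong — in that case the cancellation hinged on the surface equality $\overline{\vPose}^T \mathcal{V}_{\overline{\vPose}} = 1$, and here I expect the analogous role to be played by equation (\ref{eqn:equality_psi}), namely $\nu^T \mathcal{V}_{\overline{\vPose}} = |\roboCurvature|^p$, which is precisely the restatement of the bent-cuboid surface constraint (\ref{eqn:two_stage_equality_bent}).

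Concretely, let $c := 1/|\roboCurvature|^p$ so that $P_{\overline{\vPose}} = I - c\,\overline{Q}^{-T}\mathcal{V}_{\overline{\vPose}}\nu^T\overline{Q}^T$. Expanding,
\begin{equation*}
P_{\overline{\vPose}}^2 = I - 2c\,\overline{Q}^{-T}\mathcal{V}_{\overline{\vPose}}\nu^T\overline{Q}^T + c^2\,\overline{Q}^{-T}\mathcal{V}_{\overline{\vPose}}\bigl(\nu^T\overline{Q}^T\overline{Q}^{-T}\mathcal{V}_{\overline{\vPose}}\bigr)\nu^T\overline{Q}^T.
\end{equation*}
The first step is to simplify the inner scalar: $\overline{Q}^T \overline{Q}^{-T} = I_{3\times 3}$, which immediately collapses the middle factor to $\nu^T \mathcal{V}_{\overline{\vPose}}$. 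Invoking (\ref{eqn:equality_psi}), this scalar equals $|\roboCurvature|^p = 1/c$, so the cubic term telescopes to $c\,\overline{Q}^{-T}\mathcal{V}_{\overline{\vPose}}\nu^T\overline{Q}^T$. Combining with the linear term yields $P_{\overline{\vPose}}^2 = I - c\,\overline{Q}^{-T}\mathcal{V}_{\overline{\vPose}}\nu^T\overline{Q}^T = P_{\overline{\vPose}}$, as required.

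The only subtle point — the one I would expect a careful reader to question — is that the identity $\nu^T \mathcal{V}_{\overline{\vPose}} = |\roboCurvature|^p$ is genuinely available. This is not a generic algebraic fact but holds precisely because $\overline{\vPose}$ lies on the bent-cuboid surface (\ref{eqn:kappa_level_3D}); equivalently, it is the statement that the weighted polar $L_p$ function is $p$-homogeneous in its \emph{polar} arguments $(R_{\mathcal{T}_\roboCurvature}(\vPose)-1,\,\theta_{\mathcal{T}_\roboCurvature}(\vPose)-\theta_0,\,|\roboCurvature|\vCoordZ)$ grouped into $\nu$. Thus idempotency here is inherently a \emph{constrained} property, not a matrix identity valid for arbitrary $\vPose$. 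No subtlety arises from the existence of $\overline{Q}^{-1}$, since $\det(\overline{Q}) = -R_{\mathcal{T}_\roboCurvature}(\vPose) \neq 0$ on the robot surface (where $R_{\mathcal{T}_\roboCurvature}$ is bounded away from zero by $1 - |\roboCurvature|\halfL_2 > 0$, assuming the geometrically meaningful regime in which the bent robot does not self-intersect its axis). I would briefly flag this non-degeneracy and then conclude.
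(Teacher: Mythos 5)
Your proof is correct and is exactly the argument the paper intends: its own proof of this proposition is the one-line remark ``Follows the proof of Proposition~\ref{prop:projection_regular} but using (\ref{eqn:equality_psi}),'' and you have simply carried out that expansion, with the scalar $\nu^T\overline{Q}^T\overline{Q}^{-T}\mathcal{V}_{\overline{\vPose}}=\nu^T\mathcal{V}_{\overline{\vPose}}=|\roboCurvature|^p$ playing the role that $\overline{\vPose}^T\mathcal{V}_{\overline{\vPose}}=1$ played in the rigid case. Your added observations---that idempotency holds only on the constraint surface and that $\overline{Q}$ is invertible there---are sound refinements rather than departures from the paper's route.
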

\begin{proof}
Follows the proof of Proposition~\ref{prop:projection_regular} but using (\ref{eqn:equality_psi}). 
%\begin{eqnarray*}
%P_{\overline{\vPose}}^2&=&(I_{3\times 3}-(1/|\roboCurvature|^p)\overline{Q}^{-T}\mathcal{V}_{\overline{\vPose}}\nu^T\overline{Q}^T)^2
%\\
%&=& I_{3\times 3}-2(1/|\roboCurvature|^p)\overline{Q}^{-T}\mathcal{V}_{\overline{\vPose}}\nu^T\overline{Q}^T
%\\
%& & +(1/|\roboCurvature|^{2p})\overline{Q}^{-T}\mathcal{V}_{\overline{\vPose}}\nu^T\mathcal{V}_{\overline{\vPose}}\nu^T\overline{Q}^T
%\\
%&=& P_{\overline{\vPose}}.
%\end{eqnarray*}
\end{proof}
%Observe that the outward normal to the robot's surface, $\overline{Q}^{-T}\mathcal{V}_{\overline{\vPose}}$, is in the null space of $P_{\overline{\vPose}}$, and the next corollary shows that the dimension of the row space of $ P_{\overline{\vPose}}$ is 2.
\begin{corollary}
\label{cor:range_P_bent}
The dimension of the range space of $P_{\overline{\vPose}}$ is 2.
\end{corollary}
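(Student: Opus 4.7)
The plan is to mimic the argument used for Corollary~\ref{cor:range_P} in the regular cuboid case, adapting it to the projection matrix $P_{\overline{\vPose}}$ defined in (\ref{eqn:P_bent}). Since the preceding proposition already establishes that $P_{\overline{\vPose}}$ is idempotent, each of its eigenvalues must be either $0$ or $1$, and therefore the dimension of the range space coincides with the trace. The whole task thus reduces to computing $\mathrm{Tr}(P_{\overline{\vPose}})$.

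First I would apply linearity of the trace to obtain
\begin{equation*}
  \mathrm{Tr}(P_{\overline{\vPose}})
    = 3 - \frac{1}{|\roboCurvature|^p}\,
      \mathrm{Tr}\bigl(\overline{Q}^{-T}\mathcal{V}_{\overline{\vPose}}\nu^T\overline{Q}^T\bigr).
\end{equation*}
Then I would invoke the cyclic property of the trace to rotate the $\overline{Q}^T$ factor to the front, which cancels with $\overline{Q}^{-T}$ and leaves
\begin{equation*}
  \mathrm{Tr}\bigl(\overline{Q}^{-T}\mathcal{V}_{\overline{\vPose}}\nu^T\overline{Q}^T\bigr)
    = \mathrm{Tr}\bigl(\mathcal{V}_{\overline{\vPose}}\nu^T\bigr)
    = \nu^T\mathcal{V}_{\overline{\vPose}}.
\end{equation*}
The key identity (\ref{eqn:equality_psi}), which says $\nu^T\mathcal{V}_{\overline{\vPose}} = |\roboCurvature|^p$, then gives $\mathrm{Tr}(P_{\overline{\vPose}}) = 3 - 1 = 2$, as required.

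The only place that requires any care is the cyclic-trace manipulation, since $\overline{Q}$ is not symmetric; one must verify that $\overline{Q}^T$ and $\overline{Q}^{-T}$ really do compose to the identity. This is immediate from $\overline{Q}^T\overline{Q}^{-T} = (\overline{Q}^{-1}\overline{Q})^T = I_{3\times 3}$, so there is no real obstacle here. Everything else is a direct transcription of the argument used in Corollary~\ref{cor:range_P}, with (\ref{eqn:equality_psi}) playing the role that $\overline{\vPose}^T\mathcal{V}_{\overline{\vPose}} = 1$ played in the regular cuboid case.
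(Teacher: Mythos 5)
Your proof is correct and takes essentially the same route as the paper's: idempotency forces the eigenvalues to be $0$ or $1$, so the rank equals the trace, which you evaluate via the cyclic property and the identity $\nu^T\mathcal{V}_{\overline{\vPose}}=|\roboCurvature|^p$ to get $3-1=2$. The paper's own proof is only a one-line sketch deferring to the regular-cuboid case, and your computation is precisely the intended filling-in of that sketch.
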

\begin{proof}
Similar to the proof of Corollary~\ref{cor:range_P}, it follows by explicitly computing the trace, and using the fact that $P_{\overline{\vPose}}$ is idempotent. 
%\begin{eqnarray*}
%Tr({P_{\overline{\vPose}}})&=&3-(1/|\roboCurvature|^p)Tr({\nu^T\overline{Q}^T\overline{Q}^{-T}\mathcal{V}_{\overline{\vPose}}})
%\\
%&=&3-(1/|\roboCurvature|^p)\nu^T\mathcal{V}_{\overline{\vPose}}
%\\
%&=&2
%\end{eqnarray*}
\end{proof}
Furthermore, $\mathcal{V}_{\overline{\vPose}}$ is an outward normal vector to the level set of the polar coordinate $(R_{\mathcal{T}_\roboCurvature}(\vPose), \theta_{\mathcal{T}_\roboCurvature}(\vPose))$, followed by the coordinate transformation explained in \S\ref{sec:Surface:Bend_cuboid}. The interchange between the polar coordinate and the original coordinate in the robot's frame is governed by $Q^{-T}$ operator. 

\textbf{Remark 3.}
A similar matrix to $Q$ or $Q^{-T}$ appears in the coordinate transformations of velocity vectors as a near-identity diffeomorphism (NID) \cite{OSaber2002NID}. Although the usage of NID is different from here, the $Q$ matrix has useful properties such as that $Q^{T}$ is almost equivalent to the $Q^{-1}$.  It is easy to check that $\det{Q}=-R_{\mathcal{T}_\roboCurvature}(\vPose)$, and the inverse, $Q^{-1}$, is
\begin{equation*}
\begin{pmatrix}
\cos(\theta_{\mathcal{T}_\roboCurvature}(\vPose)) & \sin(\theta_{\mathcal{T}_\roboCurvature}(\vPose))
\\
(1/R_{\mathcal{T}_\roboCurvature}(\vPose))\sin(\theta_{\mathcal{T}_\roboCurvature}(\vPose)) & -(1/R_{\mathcal{T}_\roboCurvature}(\vPose))\cos(\theta_{\mathcal{T}_\roboCurvature}(\vPose))
\end{pmatrix}.
\end{equation*}
Since $\det{Q}<0$, it is an orientation reversing transformation. 

%\begin{figure*}[ht!]
  %\captionsetup[subfigure]{}
  %\centering
  %%\subfloat[Boundary of regular cuboid]
    %%{{\includegraphics[width=2.05in, clip=true,trim=0.35in 0in 0.4in 0.3in]{TRO_figure_cub_cub_1.pdf}}\label{fig:TRO_figure_cub_cub}}
  %\subfloat[Rigid regular cuboid]
    %{{\includegraphics[width=1.75in, clip=true,trim=0.35in 0.25in 0.5in 0.1in]{TRO_figure_cub_cub_Lp1.pdf}}\label{fig:TRO_figure_cub_cub_Lp}}
		%\subfloat[Normal vectors for regular cuboid]
  %{{\includegraphics[width=1.85in, clip=true,trim=0.0in 0in 0.4in 0.33in]{TRO_figure_cub_cub_Nec_Suff1.pdf}}\label{fig:TRO_figure_cub_cub_Nec_Suff}}
  %%\subfloat[Boundary of bent cuboid]
    %%{{\includegraphics[width=2.05in, clip=true,trim=0.75in 0in 0.7in 0.3in]{TRO_figure_bend_cub_2.pdf}}\label{fig:TRO_figure_bend_cub}}
  %\subfloat[Bendable cuboid]
    %{{\includegraphics[width=1.75in, clip=true,trim=0.25in 0in 0.3in 0.3in]{TRO_figure_bend_cub_Lp4.pdf}}\label{fig:TRO_figure_bend_cub_Lp}}
	%\subfloat[Normal vectors for bendable cuboid]
  %{{\includegraphics[width=1.9in, clip=true,trim=0.0in 0in 0.5in 0.33in]{TRO_figure_bend_cub_Nec_Suff3.pdf}}\label{fig:TRO_figure_bend_cub_Nec_Suff}}
	%\caption{Shortest path example in $SE(3)$}
%\label{fig:cuboid_optimal}
%\end{figure*}

The geometric interpretation of the necessary condition in (\ref{eqn:necessary_bent}) is similar to the one in the regular cuboid case. 
\begin{theorem}{(Geometric interpretation)}
\label{thm:geometry_bent}
The necessary condition in (\ref{eqn:necessary_bent}) holds for $\vPose$ if and only if the vector $([{\rPose_t}]^{-1}\circ \sPose^j({\mathcal{W}}_{\overline{\vPose}}))$ is orthogonal to the two dimensional tangent space of the boundary of the robot, $\partial B_{((\halfL_1,\halfL_2,\halfL_3),\roboCurvature,p)}$, at $\vPose$.
\end{theorem}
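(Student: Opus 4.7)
The plan is to closely mirror the proof of Theorem~\ref{thm:geometry_regular}, replacing the Cartesian gradient vector $\mathcal{V}_{\overline{\vPose}}$ with its polar counterpart $\overline{Q}^{-T}\mathcal{V}_{\overline{\vPose}}$, which is the column-vector form of the row vector $\mathcal{U}_{\overline{\vPose}}$ defined in (\ref{eqn:V_polar}). The backbone of the argument is the same: identify the outward normal to the surface, check that this normal spans the one-dimensional nullspace of the projection $P_{\overline{\vPose}}$, and interpret the necessary condition (\ref{eqn:necessary_bent}) as a statement that $[{\rPose_t}]^{-1}\circ \sPose^j({\mathcal{W}}_{\overline{\vPose}})$ must be parallel to that normal.

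First I would identify the outward normal to $\partial B_{((\halfL_1,\halfL_2,\halfL_3),\roboCurvature,p)}$ at $\overline{\vPose}$. Since this surface is the $|\roboCurvature|$-level set of $\Psi_{(\halfL,\roboCurvature,p)}$, its outward normal (up to positive scale) is the gradient of $\Psi_{(\halfL,\roboCurvature,p)}$ in Cartesian coordinates. By (\ref{eqn:V_polar}) the chain rule through the polar change of variables $\mathcal{T}_\roboCurvature$ yields exactly $\mathcal{U}_{\overline{\vPose}} = \mathcal{V}_{\overline{\vPose}}^T \overline{Q}^{-1}$ as a row vector, so the normal direction is the column vector $\overline{Q}^{-T}\mathcal{V}_{\overline{\vPose}}$. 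The two-dimensional tangent space to the surface at $\overline{\vPose}$ is the orthogonal complement of this vector.

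Next I would verify that $\overline{Q}^{-T}\mathcal{V}_{\overline{\vPose}}$ lies in the nullspace of $P_{\overline{\vPose}}$. Substituting into the formula (\ref{eqn:P_bent}) and using $\overline{Q}^{T}\overline{Q}^{-T} = I$ together with the surface identity (\ref{eqn:equality_psi}), $\nu^T\mathcal{V}_{\overline{\vPose}} = |\roboCurvature|^p$, one computes
\begin{equation*}
  P_{\overline{\vPose}}\,\overline{Q}^{-T}\mathcal{V}_{\overline{\vPose}}
   = \overline{Q}^{-T}\mathcal{V}_{\overline{\vPose}}
     - \tfrac{1}{|\roboCurvature|^p}\,
        \overline{Q}^{-T}\mathcal{V}_{\overline{\vPose}}\,
        (\nu^T\mathcal{V}_{\overline{\vPose}}) = 0.
\end{equation*}
Corollary~\ref{cor:range_P_bent} then gives that the range of the idempotent $P_{\overline{\vPose}}$ has dimension $2$, so its nullspace is one-dimensional and therefore spanned by $\overline{Q}^{-T}\mathcal{V}_{\overline{\vPose}}$. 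Consequently $P_{\overline{\vPose}}v=0$ holds if and only if $v$ is a scalar multiple of $\overline{Q}^{-T}\mathcal{V}_{\overline{\vPose}}$, i.e., $v$ is aligned with the outward normal at $\overline{\vPose}$ and hence orthogonal to the two-dimensional tangent space of $\partial B_{((\halfL_1,\halfL_2,\halfL_3),\roboCurvature,p)}$. Applying this with $v = [{\rPose_t}]^{-1}\circ \sPose^j({\mathcal{W}}_{\overline{\vPose}})$ gives the theorem.

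The main obstacle is the identification step: one must confirm that the row vector $\mathcal{U}_{\overline{\vPose}}$ defined through the polar coordinate chain rule in (\ref{eqn:V_polar}) truly represents the gradient of $\Psi_{(\halfL,\roboCurvature,p)}$ in the original Cartesian coordinates, so that its transpose is in fact the geometric outward normal. The factor $\overline{Q}^{-T}$ bundles together the polar-to-Cartesian change of basis with the scaling introduced by $\mathcal{T}_\roboCurvature$, and care is required to track the signs and the ordering $(R-1,\theta-\theta_0,|\roboCurvature|\vCoordZ)$ in $\nu$ so that the cancellation in the displayed computation above comes out cleanly. Once this identification is verified, the remainder of the argument is the same linear-algebra reasoning (idempotence, trace, dimension count) used for the regular cuboid case.
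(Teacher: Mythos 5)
Your proposal is correct and follows essentially the same route as the paper's own proof: identify $\overline{Q}^{-T}\mathcal{V}_{\overline{\vPose}}$ as the Cartesian gradient (outward normal), show it spans the one-dimensional nullspace of the idempotent $P_{\overline{\vPose}}$ via the identity $\nu^{T}\mathcal{V}_{\overline{\vPose}}=|\roboCurvature|^{p}$ together with Corollary~\ref{cor:range_P_bent}, and conclude that (\ref{eqn:necessary_bent}) holds iff the obstacle-gradient vector is aligned with that normal. You simply make explicit the nullspace computation that the paper states without display, so no substantive difference.
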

\begin{proof}
Since $\overline{Q}^{-T}\mathcal{V}_{\overline{\vPose}}$ is in the null space of $P_{\overline{\vPose}}$, and it is a gradient vector, then, by using Corollary~\ref{cor:range_P_bent}, the row space of $P_{\overline{\vPose}}$ is found to be equal to the tangent space of the boundary of the robot, $\partial B_{((\halfL_1,\halfL_2,\halfL_3),\roboCurvature,p)}$, at $\vPose$. Therefore, the necessary condition in (\ref{eqn:necessary_bent}) holds if and only if the vector $([{\rPose_t}]^{-1}\circ \sPose^j({\mathcal{W}}_{\overline{\vPose}}))$ is aligned with $\overline{Q}^{-T}\mathcal{V}_{\overline{\vPose}}$.
\end{proof}
%The Theorem~\ref{thm:geometry_bent} shows that the stationary solution, $\overline{\vPose}$, must satisfy the alignment between the outward normal vector to the robot's boundary at $\vPose$, and the outward normal of the level set of the obstacle's weighted $L_p$ norm at $\vPose$ in the robot's frame. 
The examples of the stationary points are demonstrated in Figure~\ref{fig:sufficient_bent_easy} and figure~\ref{fig:sufficient_bent_hard}, where all four circles in each figure show the stationary points. The green circle is the point with minimum cost, and the red circle is the one with maximum cost. As confirmed in Theorem~\ref{thm:geometry_bent}, the normal vectors to each level sets of robot's boundary and extended obstacle's boundary in robot's frame, $\overline{Q}^{-T}\mathcal{V}_{\overline{\vPose}}$ and $([{\rPose_t}]^{-1}\circ \sPose^j({\mathcal{W}}_{\overline{\vPose}}))$, are aligned at the stationary points. 

However, different from the regular cuboid case, there exist a case when two normal vectors at the non-minimal stationary point are pointing at the opposite direction. In Figure~\ref{fig:sufficient_bent_hard}, all three stationary points have the opposite normal vectors including the green circle. This is due to the nonconvex shape of the robot, and so the Proposition~\ref{prop:sufficient_regular} does not hold for every configuration of the bent robot. 

Nevertheless, checking whether two normal vectors are pointing opposite direction is important since it eliminates the possibility of finding the maximum point (shown as red circles in Figrue~\ref{fig:sufficient_bent_hard}), and it is necessarily true that the globally minimum point should satisfy this property. Furthermore, in many cases as in Figure~\ref{fig:sufficient_bent_easy}, only the globally minimum point attains this property among the other candidates. 

Therefore, similar inequality test to (\ref{eqn:sufficient_regular}) is proposed by using the gradient of the Hamiltonian in (\ref{eqn:Hamiltonian_grad_bent}) and (\ref{eqn:lambda_bent}),
\begin{equation}
\label{eqn:sufficient_bent1}
(1/|\roboCurvature|^{p_r})([{\rPose_t}]^{-1}\circ \sPose^j({\mathcal{W}}_{\overline{\vPose}}))^T\overline{Q}\nu<0,
\end{equation}
which is also can be viewed as the Lagrange multiplier $\lambda$ needs to be positive in the Hamiltonian (\ref{eqn:Hamiltonian_bent}).

In addition, observe that all the faces of the bent cuboid robot, $\partial B_{((\halfL_1,\halfL_2,\halfL_3),\roboCurvature,p)}$, have a convex shape except the face with the shorter arch which has a concave shape. Therefore, if there exist a minimum point on the seven convex faces (including edges), the obstacle weighted $L_p$ norm distance should be smaller than all four corner of the faces. 

As a result, the following eight inequalities are imposed to the candidate $\overline{v}$ in addition to the inequality condition in (\ref{eqn:sufficient_bent1}). All eight corners of bent cuboid in robot's frame are represented by 
\begin{equation*}
\xi^i:=\begin{pmatrix}\mathcal{B}_\roboCurvature(u_i)\\ \halfL_3\end{pmatrix}, \chi^i=\begin{pmatrix}\mathcal{B}_\roboCurvature(u_i)\\ -\halfL_3\end{pmatrix},
\end{equation*}
where $\mathcal{B}_\roboCurvature(u_i)$ is the mapping from the regular cuboid to bent cuboid with curvature, $\roboCurvature$, (See (\ref{eqn:warping_x}) for the definition in Appendix~\ref{app:polar}), and $u_i\in\mathbb{R}^2$ for $i\in\{1,2,3,4\}$ represents the four corners of regular rectangle with half lengths, $(\halfL_1,\halfL_2)$. The eight inequality constraints combined with AND operation are given as
\begin{eqnarray}
\label{eqn:8_constraint_top}
&\land_{i=1}^4 (\left| \left| [\sPose^j]^{-1}\circ \applyG{\rPose_t}{\xi^i}\right| \right|_{\halfL^j,\oP}> \left| \left| [\sPose^j]^{-1}\circ \applyG{\rPose_t}{\overline{\vPose}}\right| \right|_{\halfL^j,\oP})\\
\label{eqn:8_constraint_bottom}
&\land_{i=1}^4 (\left| \left| [\sPose^j]^{-1}\circ \applyG{\rPose_t}{\chi^i}\right| \right|_{\halfL^j,\oP}>\left| \left| [\sPose^j]^{-1}\circ \applyG{\rPose_t}{\overline{\vPose}}\right| \right|_{\halfL^j,\oP}).
\end{eqnarray}
In order to find the closest point, the four equality constraints (necessary condition in (\ref{eqn:necessary_bent}) and (\ref{eqn:two_stage_equality_bent})), and nine inequality constraints (sign of $\lambda$ condition in (\ref{eqn:sufficient_bent1}), and eight constraints for the corners in (\ref{eqn:8_constraint_top}-\ref{eqn:8_constraint_bottom})) are considered in this paper. 

Lastly, by combining these constraints with the inequality condition in (\ref{eqn:two_stage_inequality}), the safety conditions for bent cuboids are given by four equality constraints and ten inequality constraints.

\textbf{Remark 4.}
Although the idea of considering the eight corners is similar to \cite{Hyun2017Lp}, doing so serves a different purpose as the eight corners are not regarded as candidate collision points but used to find the closest point to the obstacle (by avoiding local minima). Therefore, the edge to edge collision is naturally captured by the proposed method, which was not available in \cite{Hyun2017Lp}.

\textbf{Remark 5.}
If the global minimum point occurs in the concave face (excluding the edges), then there exist at most two points, the true minimum point and the one on the edges of the concave face, which satisfy above necessary constraint and the eight inequalities. The only scenario when it could be a problem is the false positive case when the optimizer finds the local minimum on the edge while the obstacle intrudes the robot. However, it does not mean that the worst case always happens, and even it does, one could escape from the local minima by providing an additional inequality constraint for checking if the weighted polar $L_p$ distance from each corner of the obstacle to the CoM of the robot is greater than $|\roboCurvature|$. Nevertheless, the full analysis of this worst case is important, and the authors leave it as a future task.

\section{Optimal Path Planning}
\label{sec:path}

\begin{figure*}[t!]
  \captionsetup[subfigure]{}
  \centering
  \subfloat[Path snapshots]
    {{\includegraphics[width=0.255\textwidth, clip=true,trim=0.3in 0.05in 0.55in 0.3in]{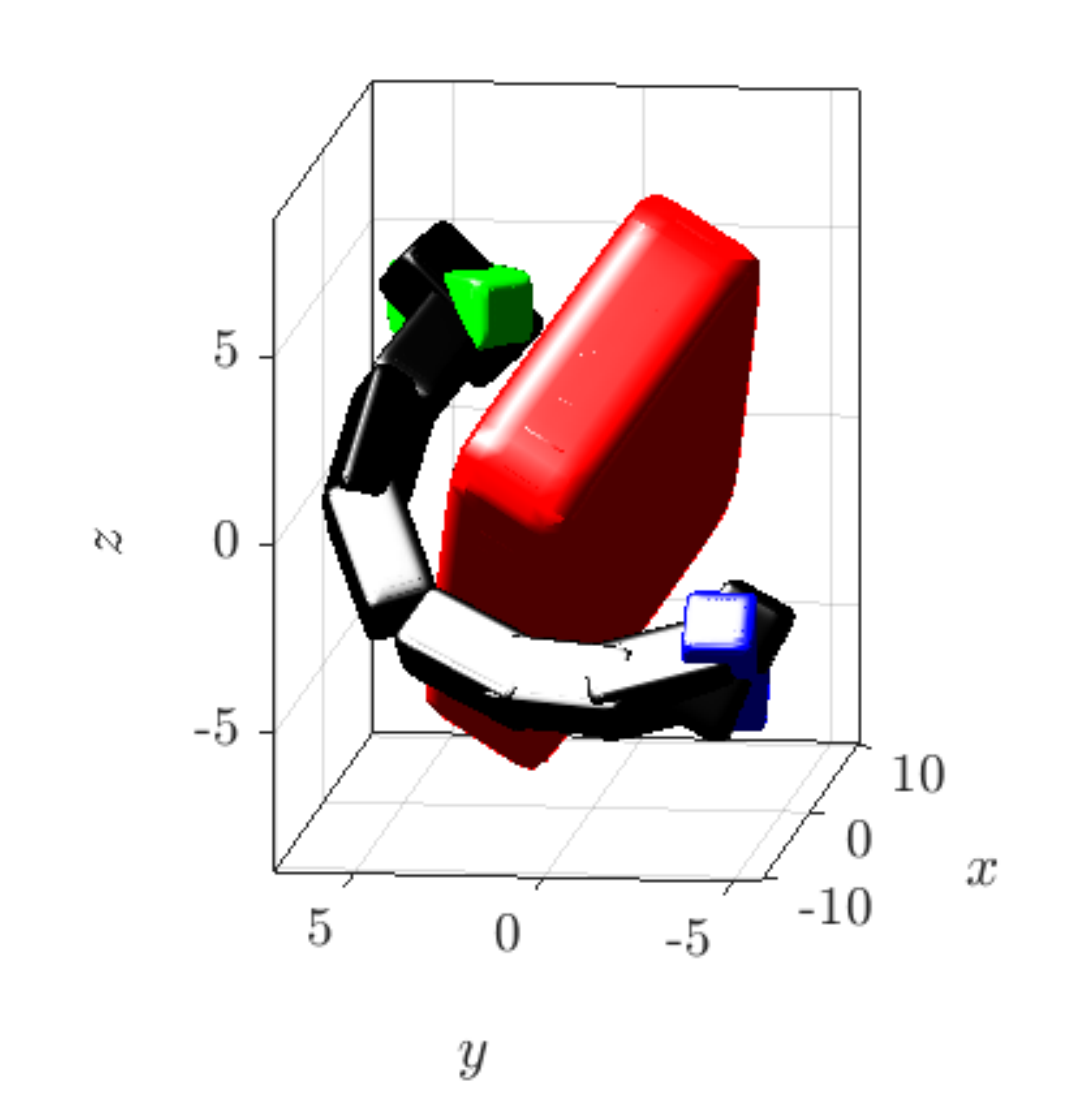}}\label{fig:TRO_figure_cub_cub_Lp}}
  \hspace*{0.01in}
  \subfloat[Normal vectors]
    {\hspace*{0.01in}{\includegraphics[width=0.25\textwidth, clip=true,trim=0.3in 0in 0.7in 0.30in]{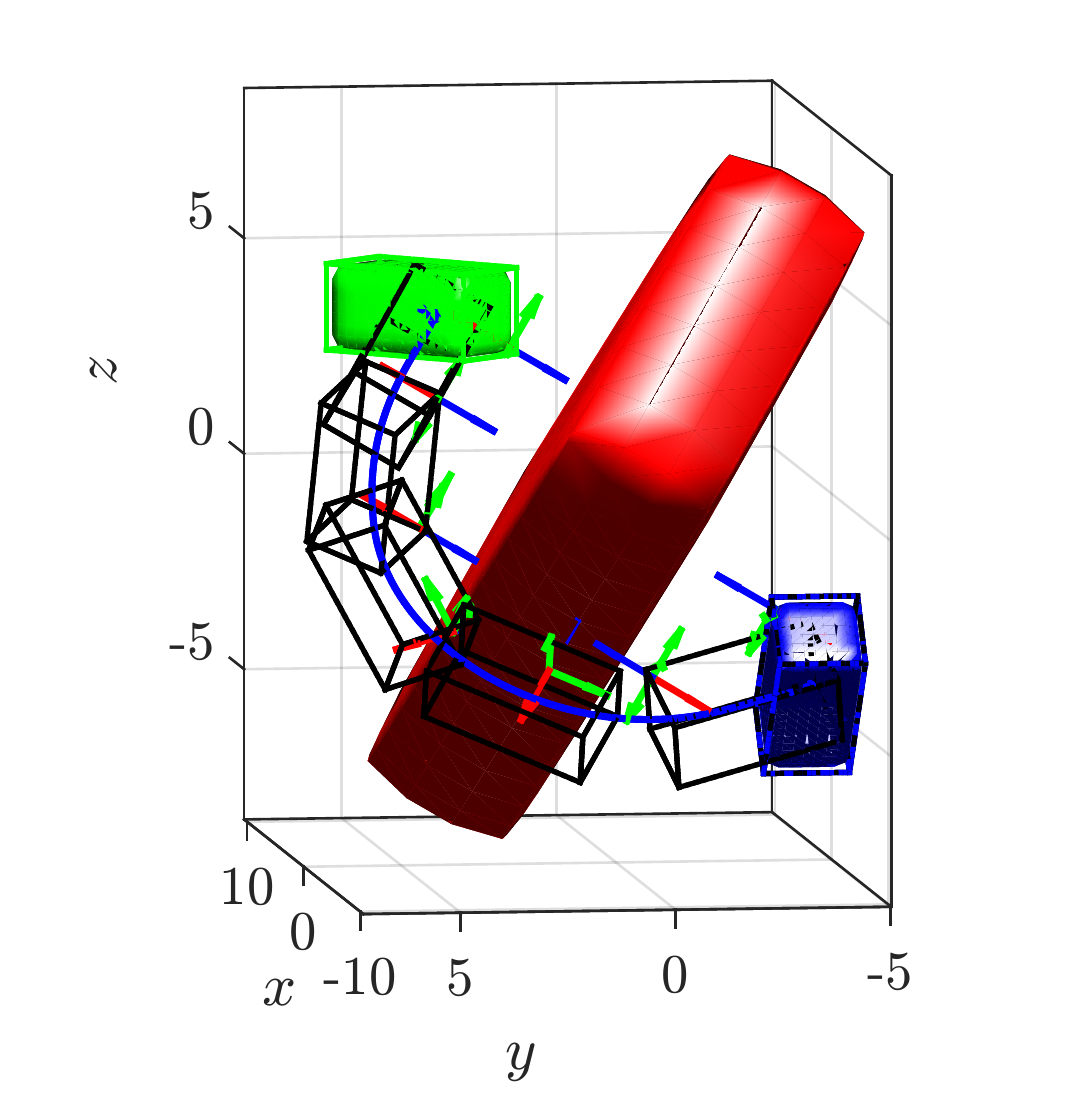}}\label{fig:TRO_figure_cub_cub_Nec_Suff}\hspace*{0.01in}}
  \subfloat[Controls for regular cuboid]{
	\begin{tikzpicture}
    %\draw (-1in,-1.1in) -- (2.25in,-1.1in) -- (2.25in,1.1in) 
    %    -- (-1in,1.1in) -- (-1in,-1.1in);
    \node[anchor=south east] (w1) at(0.3,-0.2)
    {{\includegraphics[width=0.134\textwidth, clip=true,trim=0.3in 0in 0.8in 0.13in]{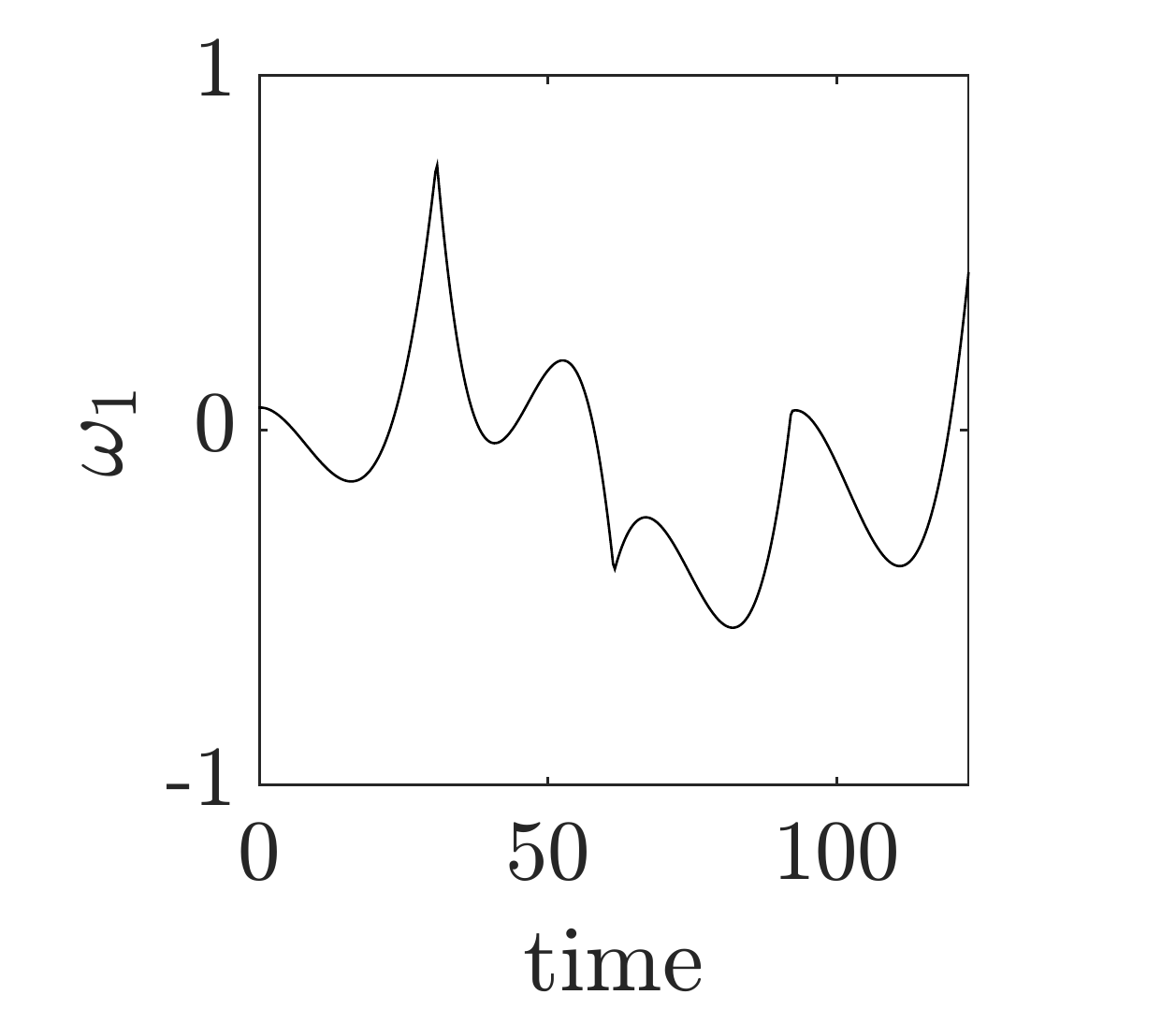}}};
\node[anchor=south] (w2) at (1.5,-0.2)
    {{\includegraphics[width=0.138\textwidth, clip=true,trim=0.1in 0in 0.8in 0.13in]{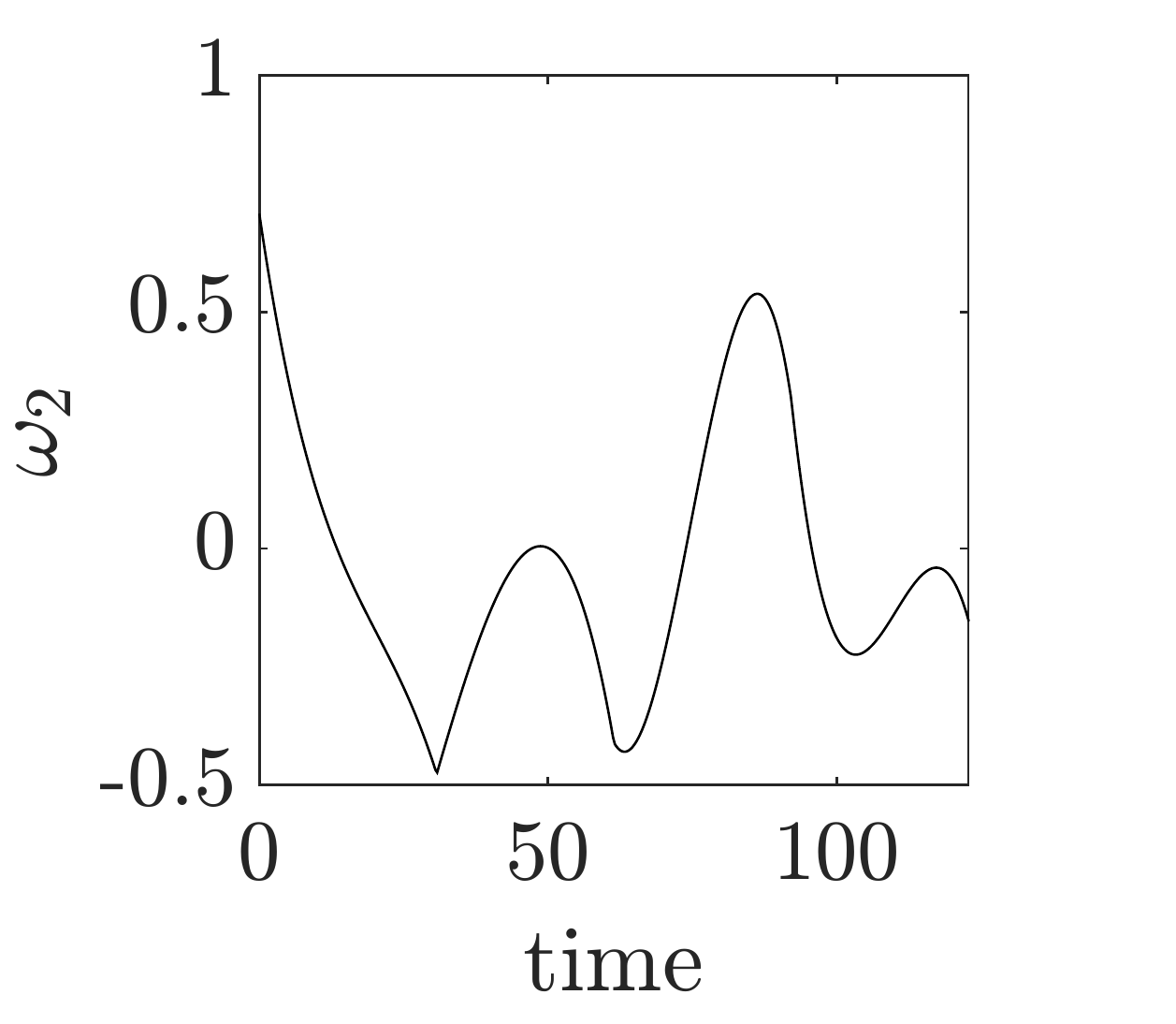}}};
		\node[anchor=south west] (w3) at (2.8,-0.2) 
    {{\includegraphics[width=0.131\textwidth, clip=true,trim=0.4in 0in 0.8in 0.13in]{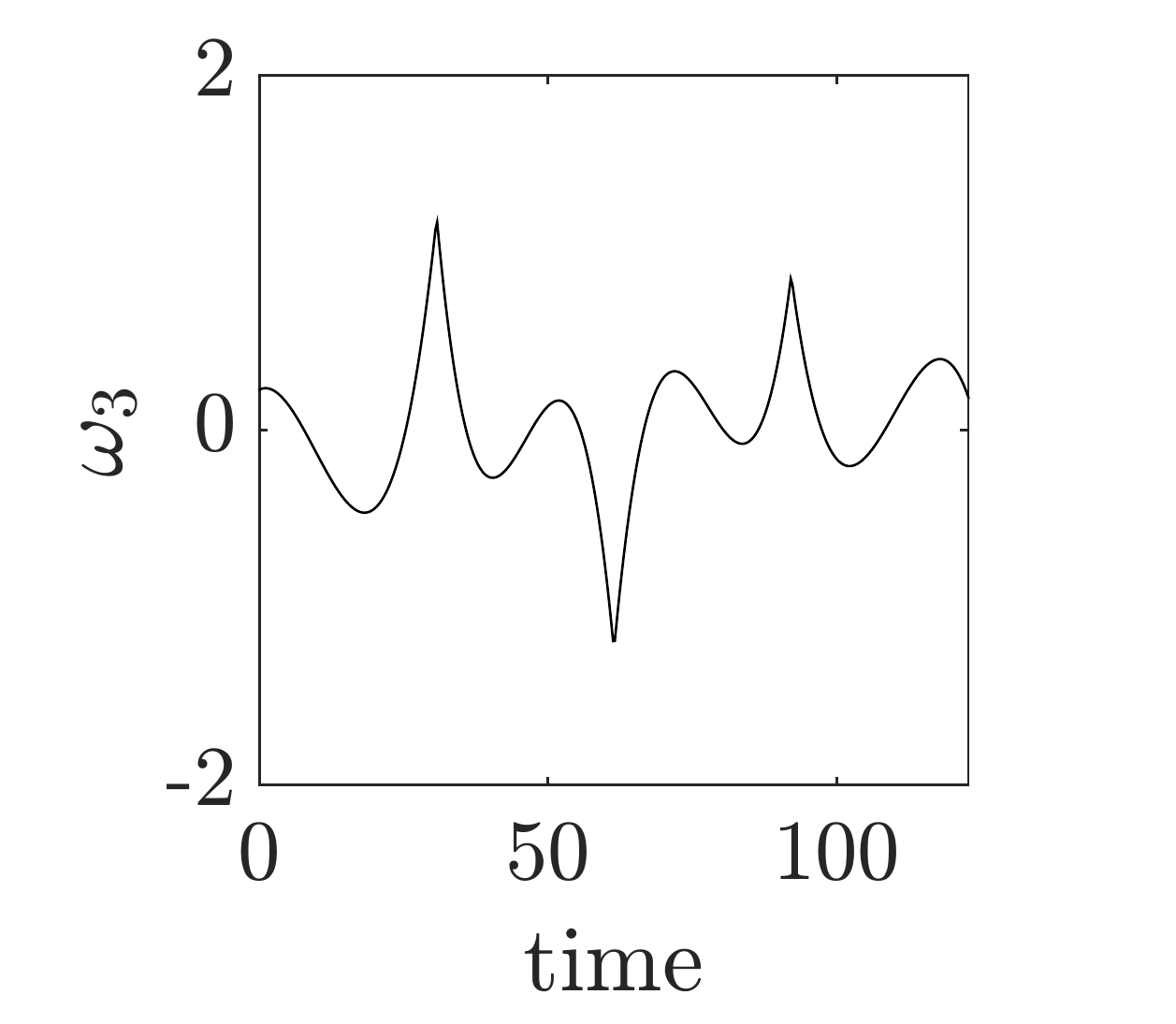}}};
		\node[anchor=north west] (speed.center) at (-0.9,0)
		{{\includegraphics[width=0.232\textwidth, clip=true,trim=0.0in 0.1in 0.6in 0.13in]{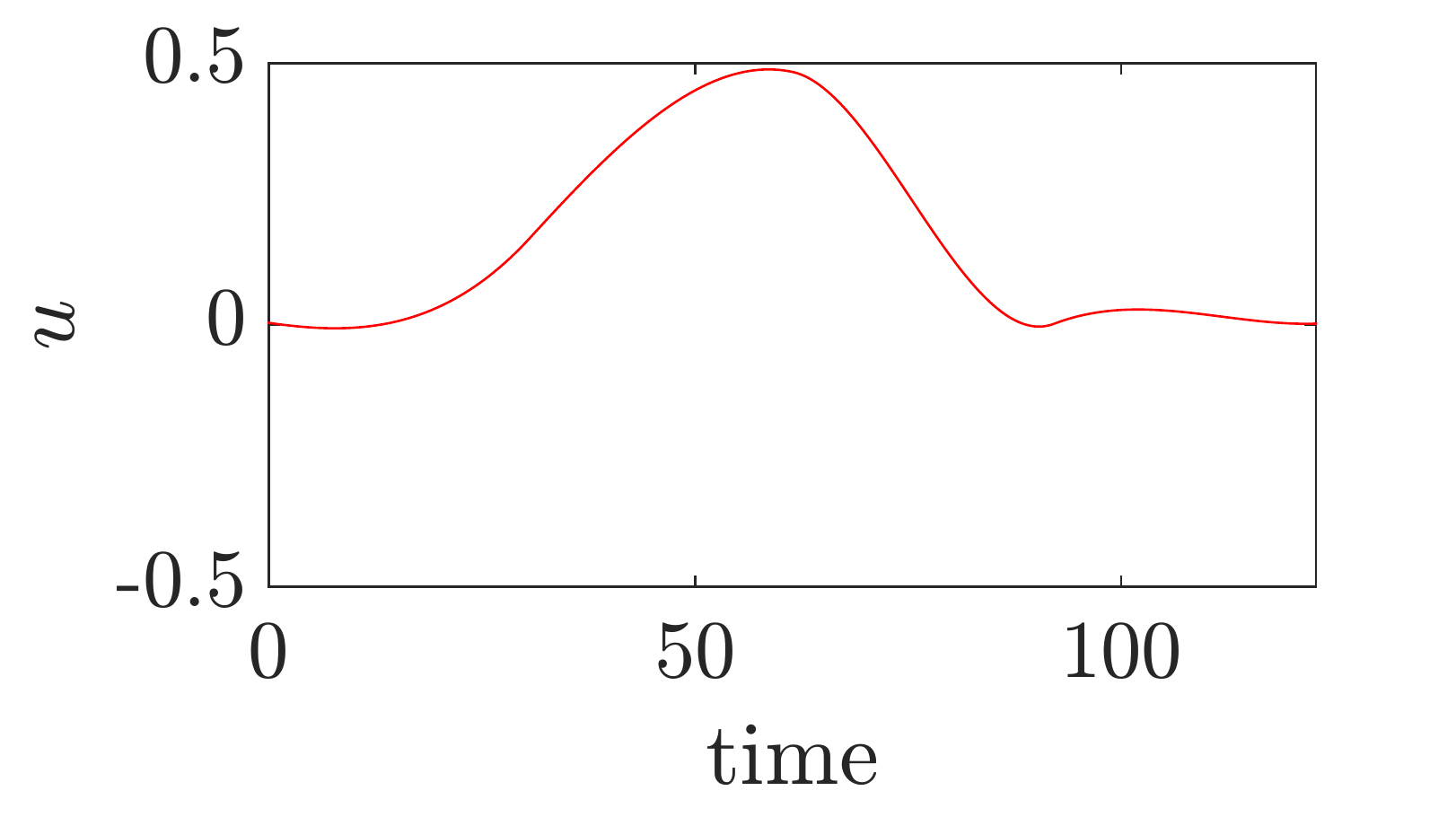}}};
		%{\fbox{\includegraphics[width=0.142\textwidth, clip=true,trim=0.0in 0.1in 0.8in 0.33in]{TRO_figure_cuboid_speed.pdf}}};
    \end{tikzpicture}\label{fig:TRO_figure_cuboid_control}}
	\caption{Shortest path example for rigid regular cuboid in $SE(3)$}
    \label{fig:cuboid_optimal}
\end{figure*}
\begin{figure*}[t!]
  \captionsetup[subfigure]{}
  \centering
	\subfloat[Path snapshots]
    {{\includegraphics[width=0.255\textwidth, clip=true,trim=0.55in 0in 0.77in 0.3in]{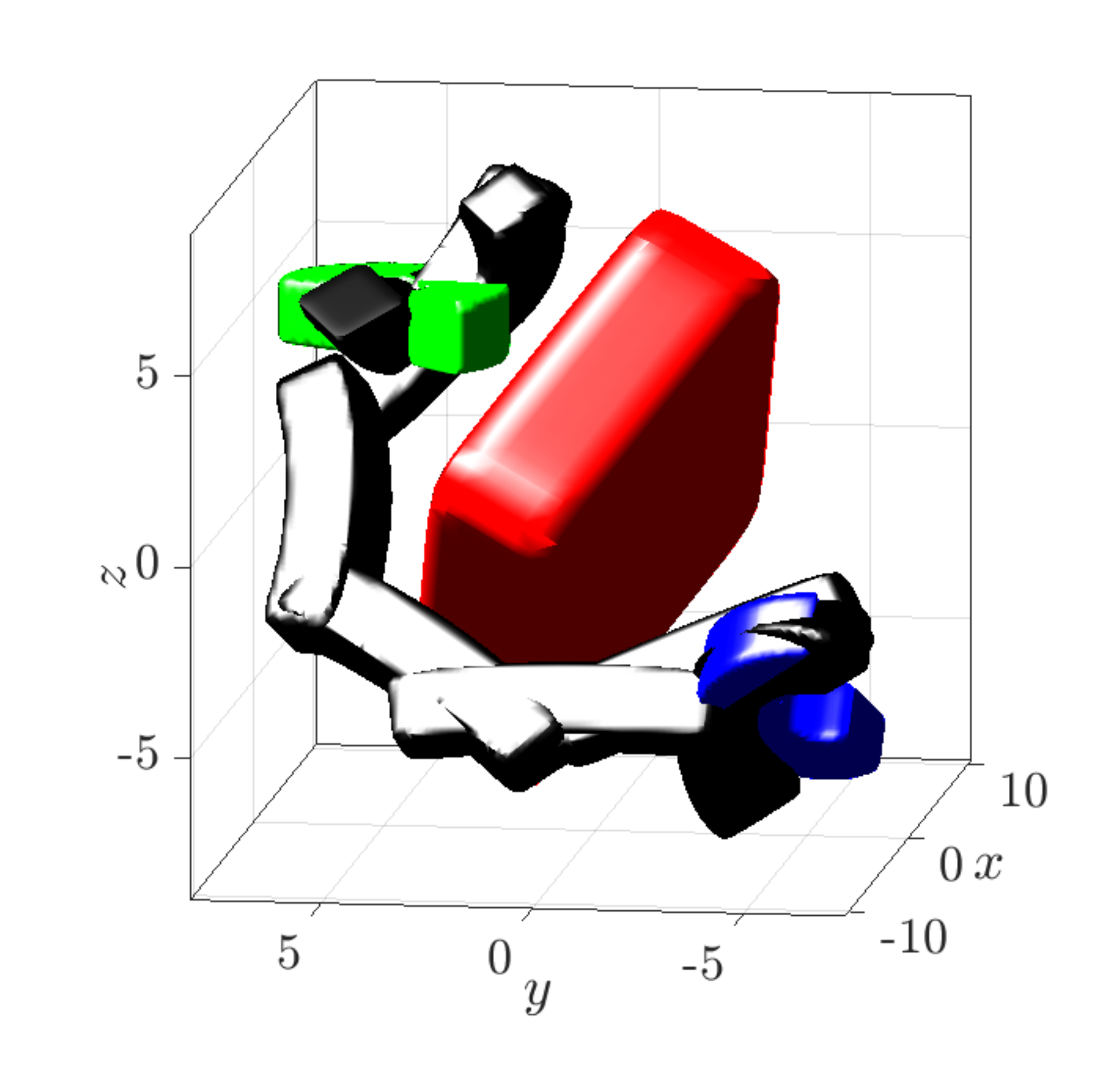}}\label{fig:TRO_figure_bend_cub_Lp}}
		%{{\includegraphics[width=0.295\textwidth, clip=true,trim=0.35in 0in 0.85in 0.3in]{TRO_figure_bend_cub_Lp6.pdf}}\label{fig:TRO_figure_bend_cub_Lp}}
		%{\fbox{\includegraphics[width=2.10in, clip=true,trim=0.05in 0in 0.1in 0.3in]{TRO_figure_bend_cub_Lp4.pdf}}\label{fig:TRO_figure_bend_cub_Lp}}
    %\hspace*{0.005in}
	\subfloat[Normal vectors]
	{{\includegraphics[width=0.259\textwidth, clip=true,trim=0.7in 0.1in 1.06in 0.33in]{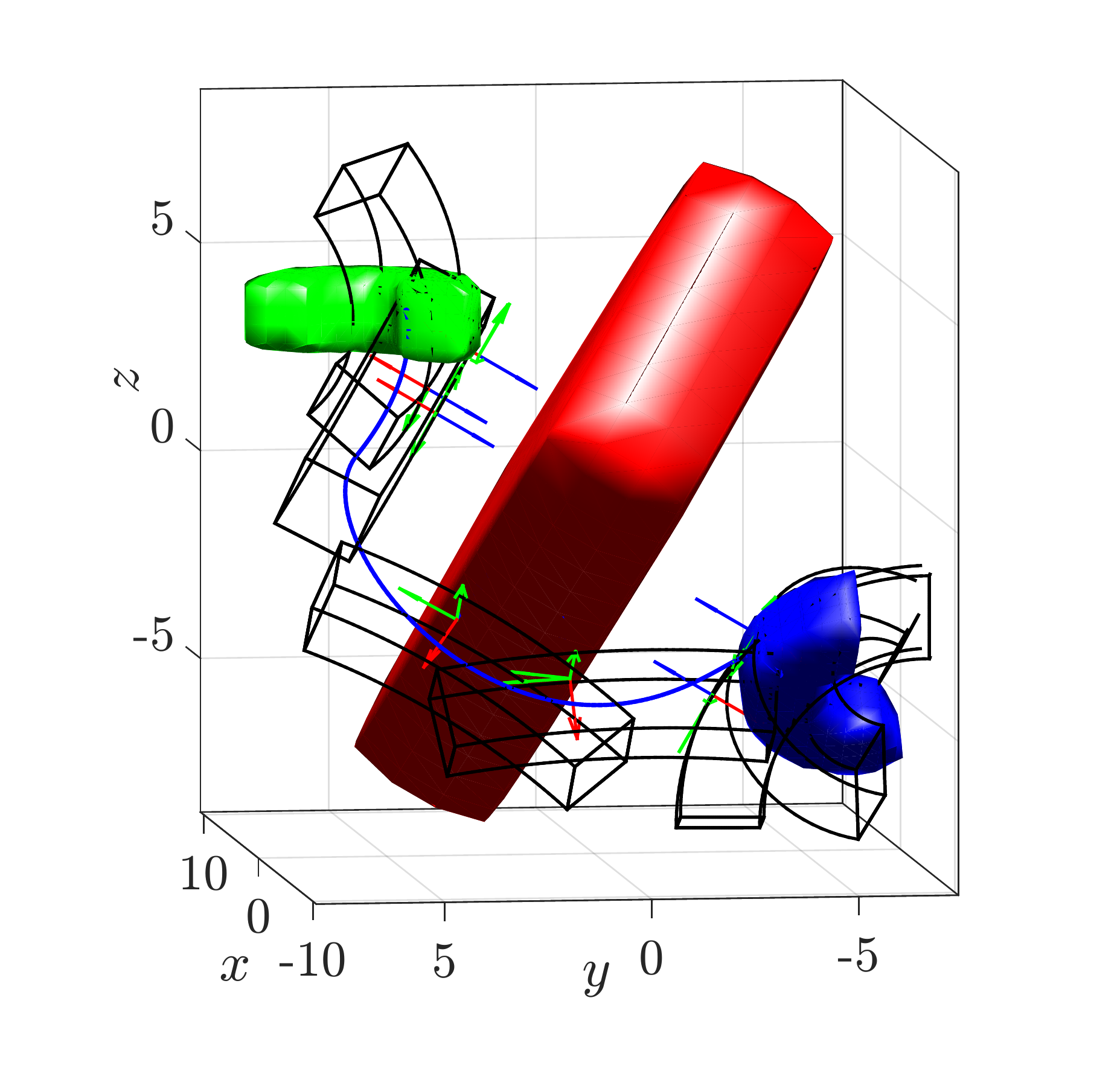}}\label{fig:TRO_figure_bend_cub_Nec_Suff}}
 % {{\includegraphics[width=0.295\textwidth, clip=true,trim=0.3in 0in 0.80in 0.33in]{TRO_figure_bend_cub_Nec_Suff4.pdf}}\label{fig:TRO_figure_bend_cub_Nec_Suff}\hspace*{0.005in}}
	%{\hspace*{0.05in}\fbox{\includegraphics[width=1.95in, clip=true,trim=0.0in 0in 0.5in 0.33in]{TRO_figure_bend_cub_Nec_Suff3.pdf}}\label{fig:TRO_figure_bend_cub_Nec_Suff}\hspace*{0.05in}}
	\subfloat[Controls for bendable cuboid]{
	\begin{tikzpicture}
	 %\draw (-1.2in,-1.1in) -- (1.95in,-1.1in) -- (1.95in,1.1in) 
   %     -- (-1.2in,1.1in) -- (-1.2in,-1.1in);
\node[anchor=south east] (wb1) at (-0.2,-0.2)
    {{\includegraphics[width=0.134\textwidth, clip=true,trim=0.3in 0in 0.8in 0.13in]{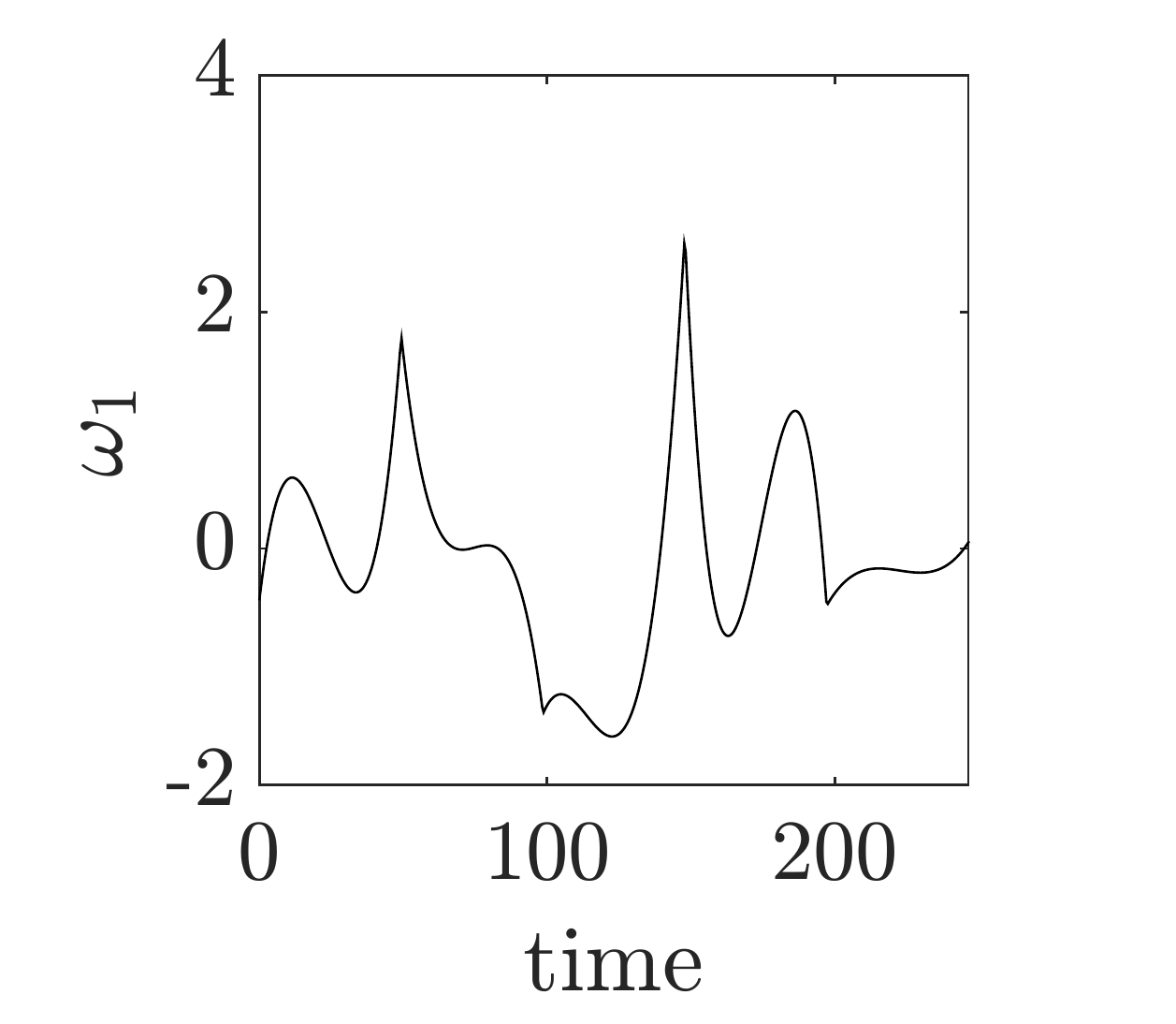}}};
\node[anchor=south ] (wb2) at (1.0,-0.2)
    {{\includegraphics[width=0.138\textwidth, clip=true,trim=0.1in 0in 0.8in 0.13in]{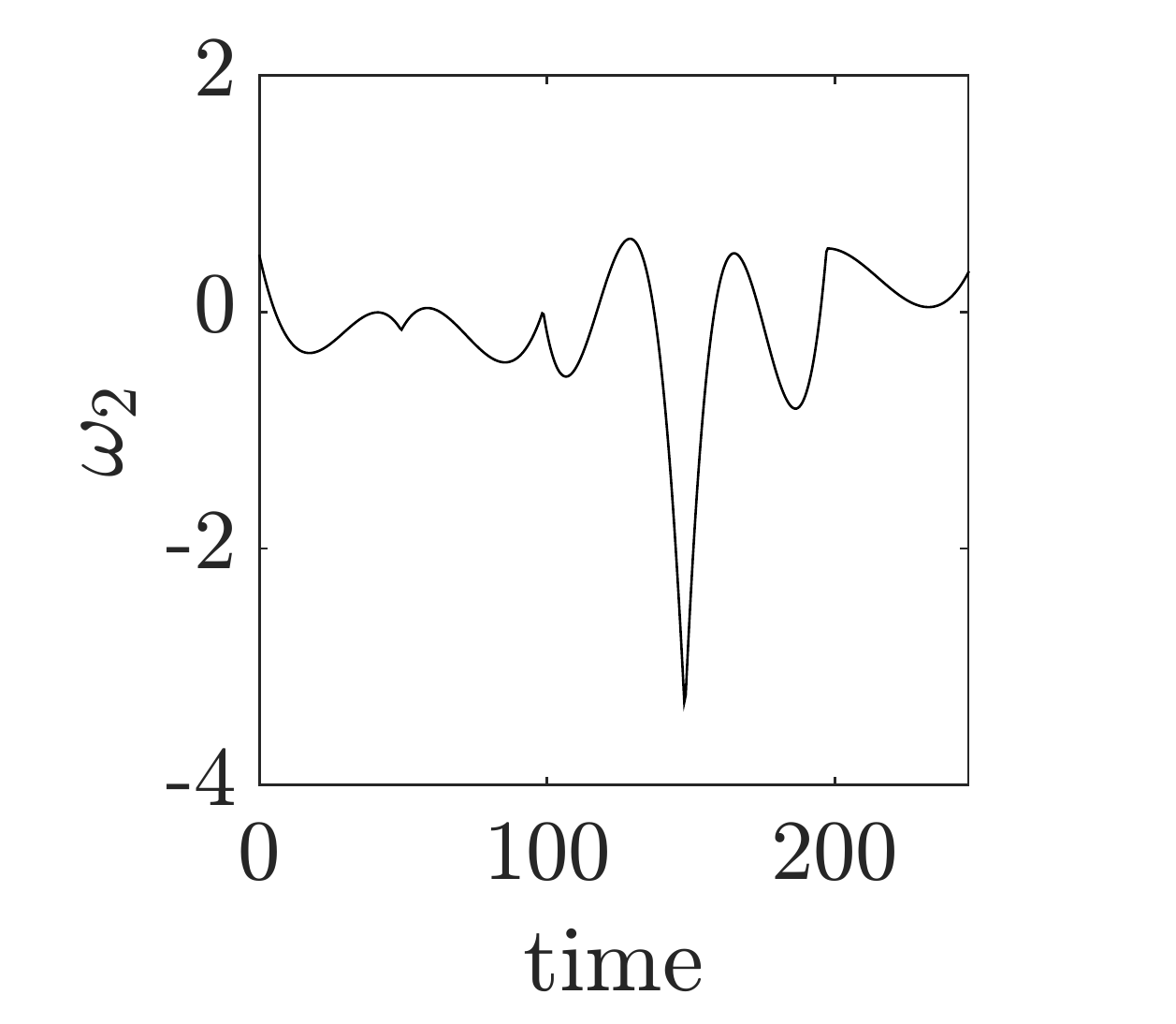}}};
		\node[anchor=south west] (wb3) at (2.2,-0.2) 
    {{\includegraphics[width=0.131\textwidth, clip=true,trim=0.3in 0in 0.8in 0.13in]{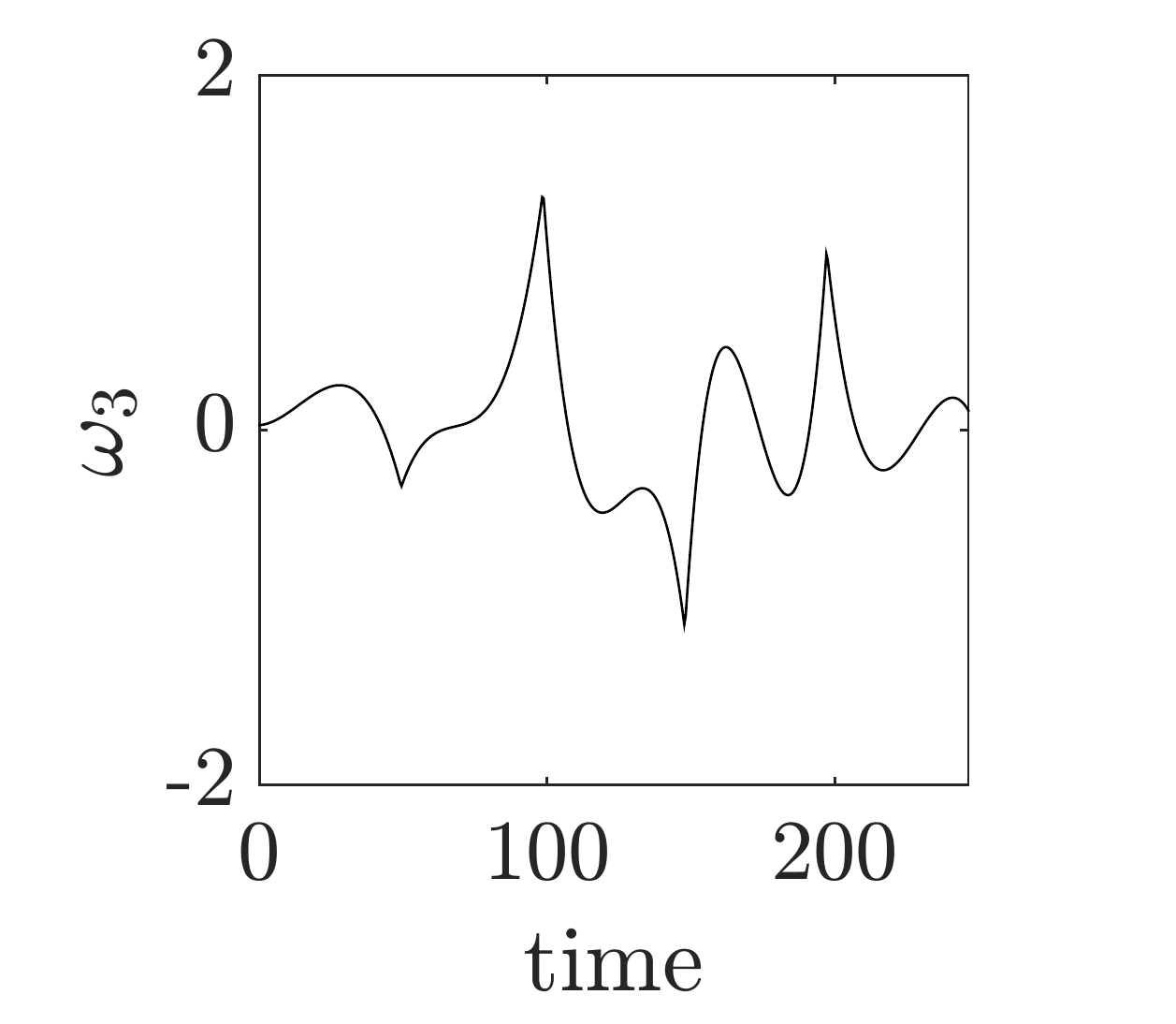}}};
		\node[anchor=north east] (speed.center)  at (1.00,0)
		{{\includegraphics[width=0.202\textwidth, clip=true,trim=0.1in 0.1in 0.55in 0.13in]{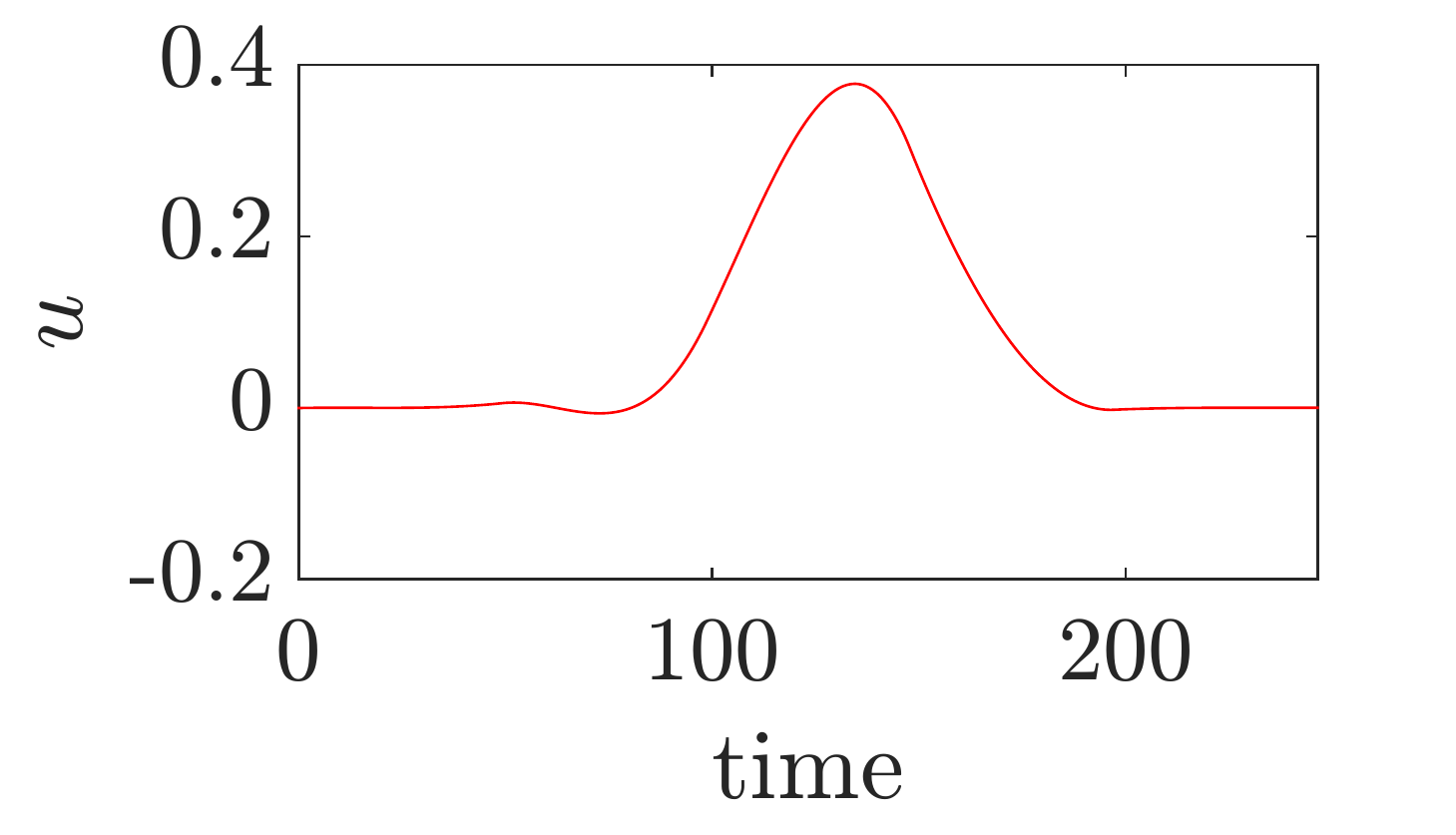}}};
		\node[anchor=north west] (kappa.center)  at (0.90,0)
		{{\includegraphics[width=0.202\textwidth, clip=true,trim=0.1in 0.1in 0.55in 0.13in]{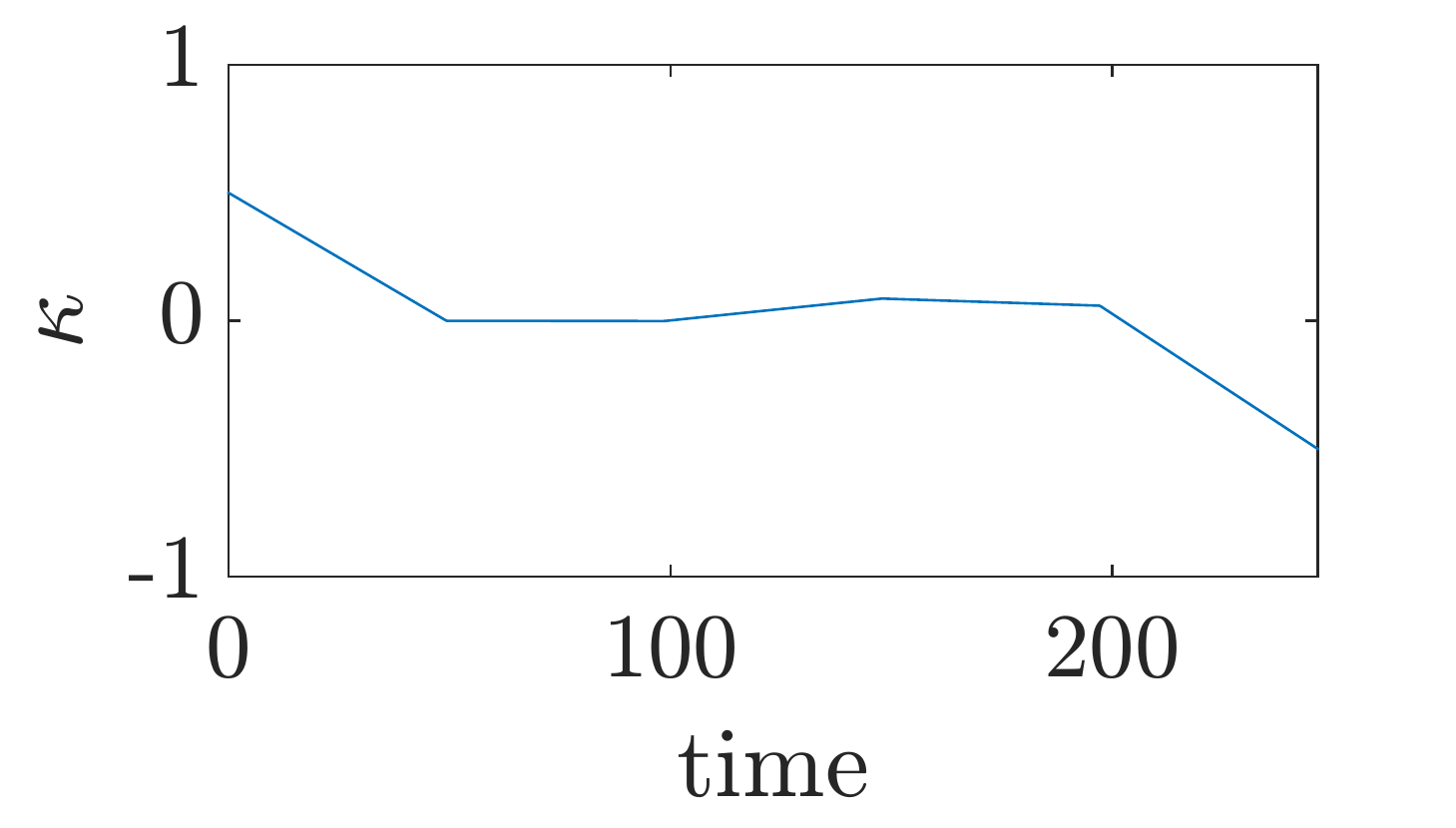}}};
\end{tikzpicture}\label{fig:TRO_figure_bend_control}}		
	\caption{Shortest path with curvature cost example for bendable cuboid in $SE(3)$}
\label{fig:bend_cuboid_optimal}
\end{figure*}

%\begin{figure}[t!]
  %\centering
      %{\fbox{\includegraphics[width=2.2in,clip=true,trim=0in 0.05in 0.0in 0.2in]{TRO_figure_Bendable_SE2_1Orthogonal.pdf}
%}\label{fig:TRO_figure_bend_rec_Lp}}
	%\caption{Shortest path example for bendable robot in $SE(2)$}
%\label{fig:rectangle_optimal}
%\end{figure}

This section solves several optimal path planning problems proposed in \S\ref{sec:prob} for a rigid and soft cuboid robot using the $L_p$ constraints proposed in the previous section. The equations of motion are those from (\ref{eqn:kinematicsSE3}-\ref{eqn:kinematicsSE32}) where the rotational matrix is represented by the unit quaternion \cite{kuipers1999quaternions, jia2008quaternions}. Assume that the control inputs, $\roboControlLVel, \roboControlAVel_1,\roboControlAVel_2,\roboControlAVel_3 \in C([0,T_f],\mathbb{R})$, are bounded by $\roboControlLVel(t)\in [-30, 30]$ and $\roboControlAVel_i(t)\in [-\pi/2, \pi/2]$ for each $i$. In addition, the curvature, $\roboCurvature\in C([0,T_f],\mathbb{R})$, is a time-varying control variable bounded by $[-1, 1]$ for the bendable cuboid case.

The MATLAB-based numerical optimal control solver OPTRAGEN
\cite{Bhattacharya2006}, converts the Bolza type optimal control problem
into a nonlinear programming problem (NLP) by approximating the trajectories
using B-splines, and sample at each provided collocation points. After
converting the optimal control problem to a NLP, invoking a numerical NLP
solver using an interior point method, IPOPT 3.12.6 \cite{Wachter2006}, find
the numerical solution.

The rectangular obstacle with half length $\halfL^1=(10,2,5)$ is located at the origin of the world frame, $(0,0,0)$, and it is rotated by $\pi/4$ along the axis of $(1, 1, 0)$. In the rigid regular cuboid model, the cost function is chosen, 
$L(\rPose, \dot{\rPose}, u):=\sqrt{\dot{x}^2+\dot{y}^2+\dot{z}^2}$, to give a shortest path
problem. The half length of the rigid regular cuboid robot is given by $(2,1,1)$, and the CoM of robot is initially located at $(-4, -4, -4)$. The robot is initially rotated by $\pi/4$ along the axis of $(1,0,0)$. The desired final CoM position is $(4,4,4)$ with the rotation of $\pi/4$ along the axis of $(0,0,1)$. 

In the bendable cuboid model, a regularization term is added to the cost to minimize the bending effort assuming that the cost of bending is proportional to $|\roboCurvature|$, 
\begin{equation*}
L(\rPose, \dot{\rPose}, u):=\sqrt{\dot{x}^2+\dot{y}^2+\dot{z}^2} + \lambda|\roboCurvature|.
\end{equation*}
The regularization parameter $\lambda = 0.1$. The half length of the bendable cuboid robot is given by $(5,1,1)$, and the initial and final curvature is given as $\roboCurvature(0)=0.5$ and $\roboCurvature(T_f)=-0.5$, respectively. 

For all of the following solution figures in Figure~\ref{fig:cuboid_optimal}, the red surface represents the obstacle boundaries, the blue surface represents the robot boundaries at initial configuration, and the green surface represents the robot boundary at final configuration. The black surfaces are used to demonstrate the approximated shape of the robot at each sampled collocation time point. The solid blue curve represents the trajectory of CoM.

\subsection{Rigid model}
The numerical result is shown in Figure~\ref{fig:TRO_figure_cub_cub_Lp}-\ref{fig:TRO_figure_cub_cub_Nec_Suff}. It is confirmed numerically at each collocation time that the closest point has been correctly captured, and the edge-to-edge collision is avoided by the proposed safety constraint. In addition, the necessary and sufficient condition for the closest point is also verified in Figure~\ref{fig:TRO_figure_cub_cub_Nec_Suff}, where the blue and red arrow represents the outward normal of the robot and the extended level set of $L_p$ at the closest point, respectively. In addition, the green arrows shows the basis vectors of the tangent space at the closest point. The corresponding numerical solution of four controls are shown in Figure~\ref{fig:TRO_figure_cuboid_control}. The result suggests that the robot should first focus on rotating without moving the CoM excessively, and then once it points toward the right direction, it navigates close to the final position. After reaching near by the final position, the robot should focus back on rotating the robot to meet the final orientation constraint. The result is intuitively understandable since the cost was to minimize the traveled length of the CoM.

\subsection{Bendable model}

In addition to the kinematic controls, $\roboControlLVel$ and $\roboControlAVel$, a continuous curvature which connects initial and final curvature is sought, and the result are shown in Figure~\ref{fig:TRO_figure_bend_cub_Lp}-\ref{fig:TRO_figure_bend_cub_Nec_Suff}. Similar to the rigid model, the orthogonality condition, and the opposite sign condition for normal vectors are confirmed. The continuous change in the curvature deforms the cuboid robot in Figure~\ref{fig:TRO_figure_bend_cub_Lp}. Since the cost consists of the traveled length of the CoM, and the integral of the absolute curvature, $|\roboCurvature|$, the numerical solution finds a path where the trajectory of the CoM turns close to the obstacle, and flattens out the robot to maintain the low curvature during the navigation. The simulation numerically and graphically verifies that the computed path avoids the collision. The corresponding numerical solution of five controls are shown in Figure~\ref{fig:TRO_figure_bend_control}. 
%The speed for CoM, and the curvature profile is shown in Figure~\ref{fig:TRO_figure_bend_speed} and \ref{fig:TRO_figure_bend_kappa}, respectively.
Similar to the result in the cuboid case, the result suggests that the
robot should turn first, and move towards to the final position, and
then rotate back to meet the final orientation constraint. In addition,
in order to minimize the norm of the curvature, the numerical planner
first decreases the curvature to near zero and stays close to zero, then
changes to the final negative curvature at the end.

%%%%%%%%%%%%%%%%%%%%%%%%%%%%%%%%%%%%%%%%%%%%%%%%%%%%%%%%%%%%%%%%%%%%%%%%%%%%%%%%
%
% Conclusion
%
%%%%%%%%%%%%%%%%%%%%%%%%%%%%%%%%%%%%%%%%%%%%%%%%%%%%%%%%%%%%%%%%%%%%%%%%%%%%%%%%

\section{Conclusions}
\label{sec:conclusion}

%\enlargethispage{-0.33in}

A new framework of approximating the regular cuboid and bent cuboid robot
has been introduced and used to provide analytic safety constraints for
collision avoidance constraints associated to cuboid shaped obstacles. 
The weighted $L_p$ norm approximately models the surfaces of regular
cuboids, and a new positive definite value function derived from a weighted
$L_p$ function in polar coordinates approximately models the surfaces of 
bent cuboids. Collision avoidance requires the surface level sets between
the robot and the obstacles to be disjoint, or equivalently, to be such that
the nearest point from the robot to the obstacle lies outside of the
obstacle level set.  The optimal control planning problem gets converted to 
a two stage optimization problem, where the inner optimization stage finds
the potential closest point in the weighted $L_p$ metric of the obstacle. 
were explored and used to reduce the two stage optimization to a single
optimization problem. Simulation results graphically show the usefulness
and viability of the proposed algorithm.

%%%%%%%%%%%%%%%%%%%%%%%%%%%%%%%%%%%%%%%%%%%%%%%%%%%%%%%%%%%%%%%%%%%%%%%%%%%%%%%%%
%%
%% Future work
%%
%%%%%%%%%%%%%%%%%%%%%%%%%%%%%%%%%%%%%%%%%%%%%%%%%%%%%%%%%%%%%%%%%%%%%%%%%%%%%%%%%
%\subsection{Future work}
%\label{sec:future}
%
%In this paper, a new framework of solving the path-planning problem by considering analytical constraints for the safety is proposed, and tested on a simple kinematic equations. There are many application where this method can be applied to the full dynamics: controlling the drone in 3D, and the snake which constantly changes its curvatures. Also, the humanoid robot can be approximated with the connected $L_p$ boxes. 
%
%In addition, the full analysis on how to speed up the optimization process using the proposed safety constraint can be performed. Currently the robot needs to track the closest point to each obstacle, and this can be avoided by considering only the closer obstacles. A numerical comparison to the mixed integer programming method would also be left as a future work. 

% if have a single appendix:
%\appendix[Proof of the Zonklar Equations]
% or
%\appendix  % for no appendix heading
% do not use \section anymore after \appendix, only \section*
% is possibly needed

% use appendices with more than one appendix
% then use \section to start each appendix
% you must declare a \section before using any
% \subsection or using \label (\appendices by itself
% starts a section numbered zero.)
%

\appendices
%%%%%%%%%%%%%%%%%%%%%%%%%%%%%%%%%%%%%%%%%%%%%%%%%%%%%%%%%%%%%%%%%%%%%%%%%%%%%%%%
%
% Appendix
%
%%%%%%%%%%%%%%%%%%%%%%%%%%%%%%%%%%%%%%%%%%%%%%%%%%%%%%%%%%%%%%%%%%%%%%%%%%%%%%%%

\section{Proof of Theorem~\ref{thm:polar_Lp_2D}}
\label{app:polar}

Let $\roboRadiusCurvature=1/|\roboCurvature|$ be the radius of curvature. The regular rectangle approximated by the $(\halfL_1,\halfL_2)$-weighted $L_p$ norm is
\begin{equation}
  B((\halfL_1,\halfL_2),p)
    := \{u\in\mathbb{R}^2|\ ||u||_{(\halfL_1,\halfL_2),p}\leq 1\}
\end{equation} 
where $u=(u_x,u_y)$ such that $u_x\in[-\halfL_1,\halfL_1]$ and $u_y\in[-\halfL_2,\halfL_2]$. 

Now define the mapping $\mathcal{B}_\roboCurvature: B((\halfL_1,\halfL_2),p)\to\mathbb{R}^2$, 
\begin{equation} \label{eqn:warping_x}
  \mathcal{B}_\roboCurvature(u)
    := \begin{pmatrix}
        (\roboRadiusCurvature+u_y)\cos(\alpha\pi/2+(\theta_\roboCurvature/2)(u_x/\halfL_1))
\\
        (\roboRadiusCurvature+u_y)\sin(\alpha\pi/2+(\theta_\roboCurvature/2)(u_x/\halfL_1))-\alpha\roboRadiusCurvature
      \end{pmatrix}
\end{equation}
for all $u\in B((\halfL_1,\halfL_2),p)$, where $\alpha=\sign(\roboCurvature)$. 

Observe that $\mathcal{B}_\roboCurvature(\cdot)$ is invertible on its image,
with said inverse $\mathcal{B}^{-1}_\roboCurvature: \mathcal{B}_\roboCurvature(B((\halfL_1,\halfL_2),p))\to B((\halfL_1,\halfL_2),p)$ given by
\begin{equation}
  \begin{pmatrix}u_x\\ u_y\end{pmatrix} 
    =
  \begin{pmatrix}
    2(\arctan{((\overline{u}_y+\alpha\roboRadiusCurvature)/\overline{u}_x)}-\alpha\pi/2)\halfL_1/\theta_\roboCurvature
\\
    \sqrt{\overline{u}_x^2+(\overline{u}_y+\alpha\roboRadiusCurvature)^2}-\roboRadiusCurvature
  \end{pmatrix}
\end{equation}
where $(\overline{u}_x,\overline{u}_y)\in \mathcal{B}_\roboCurvature(B((\halfL_1,\halfL_2),p))$, and $\theta_\roboCurvature$ is the bending angle of constant curvature (see Figure~\ref{fig:Lp_Bent_levelset_3d_positive}). By using Assumption~\ref{asmp:invariant_center} and (\ref{eqn:bending_angle}), with the coordinate transformation in (\ref{eqn:curvature_isomorphism}), it holds that 
\begin{equation}
  \begin{pmatrix}|\roboCurvature|u_x\\ |\roboCurvature|u_y\end{pmatrix}
  =
  \begin{pmatrix}
    (\theta_{\mathcal{T}_\roboCurvature}(\overline{u})-\alpha\pi/2) \\
    R_{\mathcal{T}_\roboCurvature}(\overline{u})-1
  \end{pmatrix}.
\end{equation}
Since $u\in B((\halfL_1,\halfL_2),p)$, $|||\roboCurvature|u||_{(\halfL_1,\halfL_2),p}\leq |\roboCurvature|$ holds. Therefore, by computing $|||\roboCurvature|u||_{(\halfL_1,\halfL_2),p}$, 
\begin{equation}
  \Phi_{(\halfL, \roboCurvature, p)}(\overline{u}) \leq |\roboCurvature|,
\end{equation}
for all $\overline{u}\in \mathcal{B}_\roboCurvature(B((\halfL_1,\halfL_2),p))$. 
Let the approximated bent rectangular be 
\begin{equation}
\label{eqn:boundary_bent_Lp}
B((\halfL_1,\halfL_2),\roboCurvature,p):=\{\overline{u}\in\mathbb{R}^2| \Phi_{(\halfL, \roboCurvature, p)}(\overline{u})\leq |\roboCurvature|\}.
\end{equation}
Since the mapping, $\mathcal{B}_\roboCurvature$, is diffeomorphic, $B((\halfL_1,\halfL_2),\roboCurvature,p)$ and $B((\halfL_1,\halfL_2),p)$ have a one to one correspondence for all $p$. Furthermore, the boundary of $ B((\halfL_1,\halfL_2),\roboCurvature,p)$ and the boundary of $B((\halfL_1,\halfL_2),p)$ can be induced by each other as well. Therefore, it is now obvious that $\partial B((\halfL_1,\halfL_2),\roboCurvature,p)$ approaches the bent rectangular robot as $p \rightarrow \infty$ by computing (\ref{eqn:warping_x}) for the actual bent rectangular robot. 

\textbf{Remark.}
The centerline of $B((\halfL_1,\halfL_2),p)$ is on the $x$-axis, and is given by a parameterized curve $C(s)=(s,0)$ for $s \in [-\halfL_1,\halfL_1]$. Transforming the curve $C(\cdot)$ using $\mathcal{B}_\roboCurvature$, leads to an arc passing through zero with constant curvature $\roboCurvature$, 
\begin{equation} \label{eqn:bent_centerline}
  \begin{pmatrix}
  (\roboRadiusCurvature)\cos(\alpha\pi/2+(\theta_\roboCurvature/2)(s/\halfL_1))
  \\
  (\roboRadiusCurvature)\sin(\alpha\pi/2+(\theta_\roboCurvature/2)(s/\halfL_1))
    -\alpha\roboRadiusCurvature
  \end{pmatrix}.
\end{equation}
This shows that the $x$-axis has been deformed to an arc after the transformation $\mathcal{B}_\roboCurvature$, and that the new bent axis corresponds to $\mathcal{B}_\roboCurvature(C(\cdot))$, which serves as a centerline of the bent robot. 
%Two examples of bent centerlines are shown as black curves in Figure~\ref{fig:Lp_Bent_levelset_positive} and Figure~\ref{fig:Lp_Bent_levelset_negative} for positive and negative curvature, respectively. 
Two examples of bent centerlines are shown as black curves in Figure~\ref{fig:Lp_Bent_levelset_positive_negative} for positive and negative curvature, respectively. 

%Therefore, the parameter, $s$, varying on the bent axis, $\mathcal{B}_\roboCurvature(C(s))$, serve as one basis, where another orthogonal basis is defined locally on the radial direction from the center of the curvature. Intuitively, the trajectory of the bent rectangle defined in (\ref{eqn:warping_x}) can be understood according to this new bases vectors: the $u_y$ component in the original $y$-coordinate now contributes the change in the radial direction, and the $u_x$ component in the original $x$-coordinate now moving on the tangential direction of the new centerline.

% you can choose not to have a title for an appendix
% if you want by leaving the argument blank
\section{Proof of Theorem~\ref{thm:necessary_regular}}
\label{app:necessary_regular}
Let $H(\vPose_x,\vPose_y,\vPose_z,\lambda)$ be a Hamiltonian of the problem in (\ref{eqn:two_stage_opt}), 
\begin{equation}
\label{eqn:Hamiltonian}
H(\vPose_x,\vPose_y,\vPose_z,\lambda):= \left| \left| w(\vPose)\right| \right|_{\halfL^j,\oP}+\lambda(\left| \left| {\vPose}\right| \right|_{\halfL,\rP}-1),
\end{equation}
where $w(v):=(w_x,w_y,w_z)$. The gradient of $H$ is 
\begin{equation} \label{eqn:Hamiltonian_grad}
  \nabla_{\vPose} H = 
    \left| \left| w(\vPose)\right| \right|_{\halfL^j,\oP}^{{1-\oP}}([{\rPose_t}]^{-1}\circ \sPose^j({\mathcal{W}}))^T +\lambda\left| \left| {\vPose}\right| \right|_{\halfL,\rP}^{{1-\rP}}(\mathcal{V}_{\overline{\vPose}})^T.
\end{equation}
Then the necessary condition for a stationary point is 
\begin{equation} \label{eqn:Hamiltonian_grad0}
  \nabla_{\vPose} H = 0
\end{equation}
with the equality constraint in (\ref{eqn:two_stage_equality_regular}). By multiplying $\overline{\vPose}$ to the right and using (\ref{eqn:two_stage_equality_regular}), $\lambda$ is computed to be
\begin{equation*}
  \lambda = -(\left| \left| w(\vPose)\right| \right|_{\halfL^j,\oP}^{{1-\oP}}/\left| \left| {\vPose}\right| \right|_{\halfL,\rP}^{{1-\rP}})([{\rPose_t}]^{-1}\circ \sPose^j({\mathcal{W}}))^T\overline{\vPose},
\end{equation*}
and by replacing $\lambda$ in (\ref{eqn:Hamiltonian_grad}), the necessary condition in (\ref{eqn:Hamiltonian_grad0}) is equivalent to (\ref{eqn:necessary_regular}). 

\section{Proof of Theorem~\ref{thm:necessary_bent}}
\label{app:necessary_bent}
Let $H(\vPose_x,\vPose_y,\vPose_z,\lambda)$ be a Hamiltonian of the problem in (\ref{eqn:two_stage_opt}), 
\begin{equation}
\label{eqn:Hamiltonian_bent}
H(\vPose_x,\vPose_y,\vPose_z,\lambda):= \left| \left| w(\vPose)\right| \right|_{\halfL^j,\oP}+\lambda(\Psi_{(\halfL,\roboCurvature,\rP)}(\vPose)-|\roboCurvature|),
\end{equation}
where $w(v):=(w_x,w_y,w_z)$. The gradient of $H$ is 
\begin{equation} \label{eqn:Hamiltonian_grad_bent}
  \left| \left| w(\vPose)\right| \right|_{\halfL^j,\oP}^{{1-\oP}}([{\rPose_t}]^{-1}\circ \sPose^j({\mathcal{W}}))^T +\lambda|\roboCurvature|\Psi_{(\halfL,\roboCurvature,\rP)}(\vPose)^{1-\rP}(\mathcal{U}_{\overline{\vPose}}).
\end{equation}
Then the necessary condition for a stationary point is 
\begin{equation} \label{eqn:Hamiltonian_grad0_bent}
  \nabla_{\vPose} H = 0
\end{equation}
with the equality constraint in (\ref{eqn:two_stage_equality_regular}). By multiplying $\overline{Q}\nu$ to the right and using (\ref{eqn:two_stage_equality_regular}) and (\ref{eqn:equality_psi}), $\lambda$ is computed to be
\begin{multline} \label{eqn:lambda_bent}
  \lambda = 
    -(\left| \left| w(\vPose)\right| \right|_{\halfL^j,\oP}^{{1-\oP}}\Psi_{(\halfL,\roboCurvature,\rP)}(\vPose)^{\rP-1}/|\roboCurvature|^{p_r+1})
    \\
    ([{\rPose_t}]^{-1}\circ \sPose^j({\mathcal{W}}))^T\overline{Q}\nu,
\end{multline}
and by replacing $\lambda$ in (\ref{eqn:Hamiltonian_grad_bent}), the necessary condition in (\ref{eqn:Hamiltonian_grad0_bent}) is equal to (\ref{eqn:necessary_bent}).

%\section*{Acknowledgment}
%
%
%The authors would like to thank...
%

% Can use something like this to put references on a page
% by themselves when using endfloat and the captionsoff option.
\ifCLASSOPTIONcaptionsoff
  \newpage
\fi

% trigger a \newpage just before the given reference
% number - used to balance the columns on the last page
% adjust value as needed - may need to be readjusted if
% the document is modified later
%\IEEEtriggeratref{8}
% The "triggered" command can be changed if desired:
%\IEEEtriggercmd{\enlargethispage{-5in}}

% references section

% can use a bibliography generated by BibTeX as a .bbl file
% BibTeX documentation can be easily obtained at:
% http://mirror.ctan.org/biblio/bibtex/contrib/doc/
% The IEEEtran BibTeX style support page is at:
% http://www.michaelshell.org/tex/ieeetran/bibtex/
\bibliographystyle{IEEEtran}
% argument is your BibTeX string definitions and bibliography database(s)
\bibliography{IEEETRO_2017_Nak_seung_Patrick_Hyun}
\end{document}